\documentclass[12pt,leqno]{article}
\usepackage{amssymb}
\usepackage[mathscr]{eucal}
\usepackage{amsmath,amssymb,latexsym,theorem,bbm}
\usepackage{color,url}
\usepackage{enumitem}

\newcommand{\SC}{\scriptstyle}

\newcommand{\CC}{\mathsf{C}}
\newcommand{\DD}{\mathsf{D}}

\newcommand{\NN}{\mathbb{N}}
\newcommand{\QQ}{\mathbb{Q}}
\newcommand{\RR}{\mathbb{R}}

\newcommand{\ZZ}{\mathbb{Z}}

\newcommand{\bI}{{\boldsymbol{I}}}

\newcommand{\bP}{{\boldsymbol{P}}}

\newcommand{\bQ}{{\boldsymbol{Q}}}

\newcommand{\bX}{{\boldsymbol{X}}}

\newcommand{\bxi}{{\boldsymbol{\xi}}}

\newcommand{\bvare}{{\boldsymbol{\vare}}}

\newcommand{\bzero}{{\boldsymbol{0}}}
\newcommand{\bone}{{\boldsymbol{1}}}

\newcommand{\cA}{{\mathcal A}}
\newcommand{\cB}{{\mathcal B}}

\newcommand{\cD}{{\mathcal D}}

\newcommand{\cI}{\mathcal{I}}

\newcommand{\cG}{{\mathcal G}}
\newcommand{\cF}{{\mathcal F}}
\newcommand{\cH}{{\mathcal H}}

\newcommand{\cU}{{\mathcal U}}

\newcommand{\bcU}{\boldsymbol{\cU}}

\newcommand{\cX}{{\mathcal X}}

\newcommand{\cY}{{\mathcal Y}}

\newcommand{\cW}{{\mathcal W}}

\newcommand{\bcY}{\boldsymbol{\cY}}

\newcommand{\dd}{\mathrm{d}}
\newcommand{\ee}{\mathrm{e}}

\newcommand{\slu}{{\SC\mathrm{lu}}}

\newcommand{\EE}{\operatorname{\mathbb{E}}}
\newcommand{\PP}{\operatorname{\mathbb{P}}}

\newcommand{\var}{\operatorname{Var}}
\newcommand{\cov}{\operatorname{Cov}}

\newcommand{\vare}{\varepsilon}

\renewcommand{\mid}{\,|\,}

\renewcommand{\leq}{\leqslant}
\renewcommand{\geq}{\geqslant}

\newcommand{\stoch}{\stackrel{\PP}{\longrightarrow}}
\newcommand{\distr}{\stackrel{\cD}{\longrightarrow}}
\newcommand{\distrf}{\stackrel{\cD_f}{\longrightarrow}}
\newcommand{\distre}{\stackrel{\cD}{=}}

\newcommand{\qmean}{\stackrel{L_2}{\longrightarrow}}

\newcommand{\lu}{\stackrel{\slu}{\longrightarrow}}
\newcommand{\Jto}{\stackrel{J_1}{\longrightarrow}}
\newcommand{\Jtoo}[1]{\stackrel{J_1(#1)}{\longrightarrow}}
\newcommand{\as}{\stackrel{{\mathrm{a.s.}}}{\longrightarrow}}
\newcommand{\ase}{\stackrel{{\mathrm{a.s.}}}{=}}

\newcommand{\ns}{{\lfloor ns\rfloor}}
\newcommand{\nt}{{\lfloor nt\rfloor}}

\newcommand{\nT}{{\lfloor nT\rfloor}}

\newcommand{\proofend}{\hfill\mbox{$\Box$}}

\numberwithin{equation}{section}

\theoremstyle{change} \theorembodyfont{\em}
\newtheorem{Lem}{Lemma.}[section]
\newtheorem{Thm}[Lem]{Theorem.}
\newtheorem{Pro}[Lem]{Proposition.}
\newtheorem{Cor}[Lem]{Corollary.}

\theorembodyfont{\rm}
\newtheorem{Rem}[Lem]{Remark.}
\newtheorem{Ex}[Lem]{Example.}

\begin{document}

\begin{center}
{\bfseries\Large
Functional limit theorems for Galton--Watson\\[1mm]
 processes with inhomogeneous immigration}

\vspace*{3mm}

 {\sc\large
  M\'aty\'as $\text{Barczy}^{*,\diamond}$ \text{and}
  \ D\'aniel $\text{Bezd\'any}^{**}$ }

\end{center}

\vskip0.2cm

\noindent
 * HUN-REN--SZTE Analysis and Applications Research Group,
   Bolyai Institute, University of Szeged,
   Aradi v\'ertan\'uk tere 1, H--6720 Szeged, Hungary.

\noindent
 ** Bolyai Institute, University of Szeged,
    Aradi v\'ertan\'uk tere 1, H--6720 Szeged, Hungary.

\noindent E-mails: barczy@math.u-szeged.hu (M. Barczy),
                   bezdany@server.math.u-szeged.hu (D. Bezd\'any).

\noindent $\diamond$ Corresponding author.

\vskip0.2cm

\renewcommand{\thefootnote}{}
\footnote{\textit{2020 Mathematics Subject Classifications\/}: 60J80, 60F17. }
\footnote{\textit{Key words and phrases\/}:
  Galton--Watson process with inhomogeneous immigration, asymptotic behavior, functional limit theorem.}

\vspace*{0.05cm}

\begin{abstract}
We study the asymptotic behavior of a sequence of Galton--Watson processes with inhomogeneous immigration
 when the limit of the means of the offspring distributions is less than $1$ or equal to $1$.
Under growth conditions on the expected values of the immigration distributions
 and the variances of the offspring distributions,
 and assuming the weak convergence of properly scaled immigration processes
 towards a non-negative stochastic process $\cY$ with c\`adl\`ag or continuous sample paths,
 we establish functional limit theorems for the sequence of Galton--Watson processes with inhomogeneous immigration in question.
The limit stochastic processes can be represented as a constant multiple or an integral functional of $\cY$.
\end{abstract}

%\tableofcontents

\section{Introduction}\label{Intro}

Branching processes are mathematical models used to describe the evolution of populations
and have found numerous applications in areas including demography, ecology,
epidemiology, cancer research, and physics as well.
A prominent example is the Galton–Watson process with or without immigration,
where individuals reproduce independently according to a given offspring law
and, in case of immigration, the arrival of immigrants in each generation occurs
independently of the population's evolution according to a given immigration law.

From the view point of applications, it is worthwhile and realistic to consider
several modifications of Galton-Watson processes with immigration,
among others, the case when the number of immigrating individuals
in different generations may be not independent and possibly
having different distributions (i.e., the immigration law can depend on the generation).
In mathematical terms, with $\ZZ_+$ and $\NN$ denoting the set of non-negative
 and positive integers integers, respectively,
 let $(Y_k)_{k\in\ZZ_+}$ be a $\ZZ_+$-valued stochastic process,
and let $(X_k)_{k\in\ZZ_+}$ be a stochastic process given by
\begin{equation}\label{X_def}
X_{k}=\sum_{j=1}^{X_{k-1}}\xi_{k,j}+Y_{k-1}, \qquad
	\text{$k\in\NN$,}
\end{equation}
 with $X_0=0$ and using the convention $\sum_{j=1}^0:=0$,
 where $\{\xi_{k,j}: k,j\in\NN\}$ are independent,
 identically distributed $\ZZ_+$-valued random variables independent of $\{Y_k: k\in\ZZ_+\}$.
We call $(X_k)_{k\in\ZZ_+}$ a Galton--Watson process with inhomogeneous immigration (GWII process),
 and $(Y_k)_{k\in\ZZ_+}$ is called the immigration process of $(X_k)_{k\in\ZZ_+}$.
If, in addition, $Y_k$, $k\in\ZZ_+$ are independent and identically distributed,
 then we say that $(X_k)_{k\in\ZZ_+}$
 is a Galton--Watson process with immigration (GWI process).
Intuitively, we can interpret $X_0$ as the initial size of a population,
 and, for each $k\in\NN$, $X_k$ as the size of the $k$-th generation of a population,
 $\xi_{k,j}$ as the number of offspring of the $j$-th individual in the $(k-1)$-th generation,
 and $Y_{k-1}$ as the number of immigrants joining the $k$-th generation.

Studying asymptotic behavior of Galton–Watson processes with
 and without immigration has a long and vast history.
The nature of these limit theorems highly depends on whether the
 offspring mean of the branching process in question is
 less than, equal to, or greater than one (subcritical, critical, or supercritical cases).

In this paper, we investigate the asymptotic behavior of a sequence of Galton--Watson processes
 with inhomogeneous immigration in the cases where the limiting mean of
 the offspring distributions is less than $1$ or equal to $1$
(roughly speaking corresponding to the subcritical and critical cases).
In what follows, we only mention and summarize some of those earlier results
that are directly connected to asymptotic behavior of Galton–Watson processes with inhomogeneous immigration, the topic of the present paper.
For a book and a recent survey on several branching processes 
with inhomogeneous immigration both in discrete and continuous time
with special emphasis on their asymptotic behavior, see Rahimov \cite{Rah3}
and Rahimov \cite{Rah2}, respectively. 
In particular, in Sections 4, 5.1 and 6 in Rahimov \cite{Rah2}, 
a lot of papers are collected for the discrete time case.

Foster and Williamson \cite[part (i) of Theorem on page 227]{FosWil}
proved that given a critical GWII process such that $\PP(\xi_{1,1}=0)<1$,
$\PP(\xi_{1,1}=1)<1$, $\var(\xi_{1,1})<\infty$, $\{Y_k: k\in\ZZ_+\}$
are independent and there exists a non-negative random variable $Y$ such that 
 $\frac{1}{n}\sum_{k=0}^n Y_k$ converges to $Y$ in distribution as $n\to\infty$,
 we have that $\frac{X_n}{n}$ converges in distribution as $n\to\infty$
 and the limit random variable is characterized via its Laplace transform.
Nagaev \cite{Nag} studied the case when $(Y_k)_{k\in\ZZ_+}$ is a wide-sense stationary process
such that $\lim_{k\to\infty}\cov(Y_0,Y_k)=0$, and under some assumption on the generating function
of $\xi_{1,1}$, it was proved that $\frac{X_n}{n}$ converges in distribution to
a Gamma distributed random variable as $n\to\infty$.
Later, Asadullin and Nagaev \cite[Theorem 1]{AsaNag} proved that
this result of Nagaev \cite{Nag} remains true when the condition that
$(Y_k)_{k\in\ZZ_+}$ is a wide-sense stationary process such that $\lim_{k\to\infty}\cov(Y_0,Y_k)=0$
is replaced by the existence of a random variable $Y$ such that
$n^{-1}\EE\left\vert\sum_{k=0}^n (Y_k-Y) \right\vert\to0$ as $n\to\infty$
(i.e., $\frac{1}{n}\sum_{k=0}^n Y_k$ converges to $Y$ in $L_1$ as $n\to\infty$).
Badalbaev and Zubkov \cite[Theorem 1 and Remarks 2 and 4]{BadZub} established 
   a limit theorem for a sequence of some branching processes 
   (including critical GWI processes), which covers the results of Nagaev \cite{Nag},
   Asadullin and Nagaev \cite[Theorem 1]{AsaNag} and Foster and Williamson \cite[part (i) of Theorem on   
   page 227]{FosWil} as special cases.

Concerning the deterministic approximation of a sequence of nearly critical 
 GWII processes (the sequence of offspring means tends to $1$),
 Rahimov and Sharipov have written several papers, 
see, for example, Rahimov \cite{Rah1} and Rahimov and Sharipov \cite{RahSha}.
Rahimov \cite{Rah1} considered the case when, for each $n\in\NN$,
the random variables $\{ Y^{(n)}_k : k\in\ZZ_+\}$ representing immigration
are independent, but not identically distributed. 
More precisely, Rahimov \cite[Theorem 1]{Rah1} investigated the asymptotic behavior 
of appropriately normalized sequence of such
 nearly critical GWII processes under the assumption (among others) that the sequence
of the immigration means and variances can be approximated by regularly varying 
sequences with non-negative exponents.
It was shown that the limit stochastic process is in fact deterministic, 
and it depends on the rate of the convergence of the sequence of the offspring means towards 1.
The proof of Theorem 1 in Rahimov \cite{Rah1} is based on a limit theorem which gives sufficient conditions
under which the partial sum processes of a triangular array of dependent 
random variables converges weakly to a deterministic continuous function,
see, e.g., Theorem A in Rahimov \cite{Rah1}.
Rahimov and Sharipov \cite[Theorem 1]{RahSha} have recently shown that the above recalled
result of Rahimov \cite[Theorem 1]{Rah1} remains true when, for each $n\in\NN$,  
the random variables $\{ Y^{(n)}_k : k\in\ZZ_+\}$ can be dependent as well
(under a slightly stronger condition on the immigration variances compared to the one 
in Rahimov \cite{Rah1}). The method of proof in Rahimov and Sharipov \cite[Theorem 1]{RahSha}
is the same as in Rahimov \cite{Rah1}. 
For further references on the extensions of the result of Rahimov \cite{Rah1},
see the Introduction of Rahimov and Sharipov \cite{RahSha}.
Here we only mention Guo and Zhang \cite[Theorem 2.1]{GuoZha}, who considered the case
when there exists an $N\in\NN\setminus\{1\}$ such that $Y_i$ and $Y_j$ are independent 
for any $i,j\in\ZZ_+$ satisfying $\vert i-j\vert\geq N$.

Very recently, Sharipov \cite{Sha} has derived a functional limit theorem for a critical GWII process 
  $(X_k)_{k\in\ZZ_+}$ such that $(Y_k)_{k\in\ZZ_+}$ is strictly stationary and ergodic 
  satisfying an additional (one might call a Maxwell-Woodroofe type) condition
  under finite second-order moment assumptions on the offspring and immigration distributions. 
  It has been shown that $(n^{-1}X_{\nt})_{t\in\RR_+}$ converges weakly towards a squared Bessel process
  as $n\to\infty$, which extends the results of Wei and Winnicki \cite[Theorem 2.1]{WW}
  concerning a critical GWI process.

We call the attention of the reader to the fact that functional limit theorems for 
 branching processes can be interesting on their own rights, 
but they are also very useful in deriving the asymptotic behavior 
of some estimators of the offspring and immigration means, 
see, e.g., Isp\'any et al.\ \cite[Section 3]{IspPapZui} (for the offspring mean
in case of a critical GWI process) and Rahimov and Sharipov \cite[Section 3]{RahSha2} 
(for the offspring mean based on partial observations in case of 
a critical branching process with generation-dependent immigrations).

The paper is organized as follows.
In Section \ref{Section_GWII_model} first we give some examples of 
 GWII processes that appear in the literature
 (e.g., Rahimov and Sharipov \cite{RahSha}, Sagitov \cite{Sag},
 Barczy et al.\ \cite{BarBezPap2}, and Barczy and Bezd\'any \cite{BarBez}),
introduce the main hypotheses of our paper,
 and provide some examples of stochastic processes which sastisfy our hypotheses.
We close Section \ref{Section_GWII_model} with some remarks and
 auxiliary results concerning the hypotheses.
Section \ref{Section_conv_results} contains our main results on the asymptotic behavior
 of sequences of GWII processes, see Theorems \ref{main_1}, \ref{main_2}, and \ref{exp_thm}.
In all these theorems, we assume a growth condition on
 the sequence of expected value functions of the immigration processes (see Hypothesis \ref{H2}).
In Theorems \ref{main_1} and \ref{main_2}, under the assumption that the properly scaled
 sequence of immigration processes converges weakly to some appropriate stochastic process $\cY$ (see Hypothesis \ref{H3})
 and growth conditions on the sequence of variances of the offspring distributions (see Hypotheses \ref{H4} and \ref{H5}),
 we establish functional limit theorems for a sequence of GWII processes
 in the case when the offspring means converge to a number less than $1$ or converge to $1$, respectively.
In the case of Theorem \ref{main_1}, the limit stochastic process is a constant multiple of $\cY$,
 and in the case of Theorem \ref{main_2}, the limit stochastic process
 is an integral process of a function of $\cY$.
In Theorem \ref{exp_thm}, we establish the locally uniform convergence of the expected value functions
 of the sequence of GWII processes.
We end Section \ref{Section_conv_results} with an application of Theorem \ref{main_2} to prove
 a functional limit theorem for some $2$-type Galton--Watson processes with immigration,
 see Corollary \ref{main_4}.
In Section \ref{Prelims}, we introduce some notation and present some lemmas which will be used in the proofs.
In Section \ref{Section_prop_proofs} we prove Propositions \ref{Pro_Y_jump} and \ref{Pro_H2_UI}
 concerning our hypotheses.
Sections \ref{main_proof_1}--\ref{main_proof_4} contain the proofs of our main results,
 which are based on careful applications of a version of the continuous mapping theorem
 and moment estimates of certain random variables.
We emphasize that our method of proof is different from the ones in Rahimov \cite{Rah1}
 and Rahimov and Sharipov \cite{RahSha}, who used a theorem about the convergence of
 a sequence of martinagle differences towards a diffusion process.
We close the paper with two appendices.
Appendix \ref{GWII_moments} contains some results on the moments of some
 random variables in the paper, and Appendix \ref{CMT} contains some
 facts about the convergence of c\`ad\`ag functions, a version of the continuous mapping theorem,
 and some lemmas establishing the Borel measurability or continuity of certain mappings.

\section{Galton--Watson processes with inhomogeneous immigration}\label{Section_GWII_model}

In this section, first we introduce some notations,
 give some known examples from the literature of GWII processes,
 then introduce five hypotheses that are used throughout the paper,
 give two examples of stochastic processes which satisfy our hypotheses about the immigration processes,
 and, in the end, we present two results which highlight the role of
 the third hypothesis in question.

Let $\ZZ_+$, $\NN$, $\QQ$, $\RR$, $\RR_+$ and $\RR_{++}$ denote the set
of non-negative integers, positive integers, rational numbers, real numbers, non-negative real
numbers and positive real numbers, respectively.
The maximum of two real numbers $x,y\in\RR$ is denoted by $x\vee y$.
The positive part of a real number $x\in\RR$ is denoted by $x^+$.
The $d\times d$ identity matrix is denoted by $\bI_d$.
The null vector in $\RR^d$ is denoted by $\bzero_d$,
 and if the dimension is clear from context, we simply write $\bzero$.

Every random variable will be defined on a fixed probability space $(\Omega, \cA, \PP)$.
Convergence in probability, convergence in $L_2$, equality in distribution and almost sure equality
 is denoted by $\stoch$, $\qmean$, $\distre$ and $\ase$, respectively.
A function $f : \RR_+ \to \RR^d$ is called c\`adl\`ag if it is right continuous with left limits.
For $d\in\NN$, we will use $\DD(\RR_+,\RR^d)$ for the set of c\`adl\`ag functions from $\RR_+$ to $\RR^d$,
and $\CC(\RR_+,\RR^d)$ for the set of continuous functions from $\RR_+$ to $\RR^d$.
For each $d\in\NN$ and $f\in\DD(\RR_+,\RR^d)$, we define
 $\Delta f(0):=\bzero\in\RR^d$ and $\Delta f(t):=f(t)-\lim_{s\uparrow t}f(s)$, $t\in\RR_{++}$.
For each $d\in\NN$ and all $t\in\RR_+$, we define $\pi_t:\DD(\RR_+,\RR^d)\to\RR^d$,
$\pi_t(f):=f(t)$, $f\in\DD(\RR_+,\RR^d)$, that is, $\pi_t$ is the natural projection onto $t$.
For $d\in\NN$, the weak convergence of the finite dimensional distributions of $\RR^d$-valued stochastic processes with sample paths in $\DD(\RR_+, \RR^d)$
and the weak convergence of $\RR^d$-valued stochastic processes with sample paths in $\DD(\RR_+, \RR^d)$
are denoted by $\distrf$ and $\distr$, respectively.
For functions $f:\RR_+\to\RR^d$ and $f_n:\RR_+\to\RR^d$, $n \in \NN$,
we write $f_n \lu f$ as $n\to\infty$ if $f_n$ converges to $f$
locally uniformly as $n\to\infty$, i.e., if $\sup_{t \in K\cap\RR_+} \|f_n(t) - f(t)\| \to 0$
as $n \to \infty$ for all compact sets $K$ of $\RR$,
 or, equivalently, if $\sup_{t \in [0,T]} \|f_n(t) - f(t)\| \to 0$ as $n\to\infty$ for all $T\in\RR_+$
 (for some properties of locally uniform convergence, see Appendix \ref{CMT}).
For functions $f\in\DD(\RR_+,\RR^d)$ and $f_n\in\DD(\RR_+,\RR^d)$, $n \in \NN$,
 we write $f_n\Jtoo{d}f$ as $n\to\infty$ if $f_n$ converges to $f$
 as $n\to\infty$ in the Skorokhod $J_1$ topology,
 and if the dimension is clear from context, simply $f_n \Jto f$ as $n\to\infty$
 (for details about Skorokhod $J_1$ topology, see Appendix \ref{CMT}).

In the following remark we list some known models in the literature
which can be considered as special cases of the GWII process in \eqref{X_def}.
\begin{Rem}\label{model_remark}
\begin{enumerate}[label=(\roman*)]
\item
	If $(Y_k)_{k\in\ZZ_+}$ is a sequence of identically distributed, independent $\ZZ_+$-valued random variables,
	then \eqref{X_def} recovers the classical single-type Galton--Watson process with immigration.
\item
	The model described in \eqref{X_def} was considered, for example, in Rahimov and Sharipov \cite{RahSha}.
\item
	Suppose that $p\in\NN$ and $(\bX_k)_{k\in\ZZ_+}$ is a classical $p$-type Galton--Watson process with immigration,
	whose offspring mean matrix is triangular.
	Such processes were considered, for example, in Sagitov \cite[Theorem 4]{Sag} (without immigration), and in Barczy et al.\ \cite{BarBezPap2} and Barczy and Bezd\'any \cite{BarBez}
	in the $p=2$ and $p=3$ cases, respectively (with immigration).
	Let $i\in\{1,\dots,p\}$ be fixed, and assume without loss of generality that the offspring mean matrix is lower triangular.
	Then, for the $i$-th coordinate of $\bX_k$, $k\in\NN$, using the notation and assumptions of Barczy and Bezd\'any \cite[Section 2]{BarBez}, we can write
	\begin{equation*}
	X_{k,i}
		=\sum_{j=1}^{i}\sum_{\ell=1}^{X_{k-1,j}}\xi_{k,\ell,j,i}+\vare_{k,i}
		=\sum_{\ell=1}^{X_{k-1,i}}\xi_{k,\ell,i,i}+\left(\sum_{j=1}^{i-1}\sum_{\ell=1}^{X_{k-1,j}}\xi_{k,\ell,j,i}+\vare_{k,i}\right),
		\quad k\in\NN,
	\end{equation*}
	 where $X_{0,i}=0$,
	 and by $\xi_{k,\ell,j,i}$ we denote the number of type $i$ offsprings produced by the $\ell$-th
	 individual who is of type $j$ belonging to the $(k-1)$-th generation,
	 and the number of type $i$ immigrants in the $k$-th generation is denoted by $\vare_{k,i}$.
	Denoting the expression in the parentheses on the right hand side of the equation above by $Y_{k-1,i}$ for each $k\in\NN$,
	we have that $(X_{k,i})_{k\in\ZZ_+}$ is a GWII process in the sense of \eqref{X_def} with immigration process $(Y_{k,i})_{k\in\ZZ_+}$.
	Indeed, $\{Y_{h,i}:h\in\ZZ_+\}$ and  $\{\xi_{k,\ell,i,i}: k,\ell\in\NN\}$ are independent
	 for each $i\in\{1,\dots,p\}$,
	 as it is easy to see that, for each $i\in\{1,\dots,p\}$, the stochastic process $(Y_{h,i})_{h\in\ZZ_+}$
	 can be expressed as a function of the random variables
	 $\{\xi_{h,\ell, j, r}, \vare_{h,r}: h\in\NN, \ell\in\NN, j\in\{1,\dots,i-1\}, r\in\{1,\dots,i\}\}$,
	 which are assumed to be independent of $\{\xi_{k,\ell,i,i}: k,\ell\in\NN\}$
	 (due to the definition of $p$-type GWI processes).
	For example, in cases of $h=1$ and $h=2$, we have that
	\[
	Y_{1,i}
		=\sum_{j=1}^{i-1}\sum_{\ell=1}^{X_{1,j}}\xi_{2,\ell,j,i}+\vare_{2,i}
		=\sum_{j=1}^{i-1}\sum_{\ell=1}^{\vare_{1,j}}\xi_{2,\ell,j,i}+\vare_{2,i},
	\]
	and
	\[
	Y_{2,i}
		=\sum_{j=1}^{i-1}\sum_{\ell=1}^{X_{2,j}}\xi_{3,\ell,j,i}+\vare_{3,i},
	\]
	where
	\[
	X_{2,j}
		=\sum_{r=1}^{j}\sum_{\ell=1}^{X_{1,r}}\xi_{2,\ell,r,j}+\vare_{2,j}
		=\sum_{r=1}^{j}\sum_{\ell=1}^{\vare_{1,r}}\xi_{2,\ell,r,j}+\vare_{2,j}, \qquad j\in\{1,\dots,i-1\},
	\]
	which shows that $X_{2,j}$ is independent of $\{\xi_{k,\ell,i,i}: k,\ell\in\NN\}$ for each $j\in\{1,\dots,i-1\}$,
	yielding that $Y_{2,i}$ is also independent of $\{\xi_{k,\ell,i,i}: k,\ell\in\NN\}$.
\proofend
\end{enumerate}
\end{Rem}

In this paper, we consider a sequence of GWII processes.
For each $n\in\NN$, let $(X_k^{(n)})_{k\in\ZZ_+}$ be a GWII process in the sense of \eqref{X_def}
with immigration process $(Y_k^{(n)})_{k\in\ZZ_+}$, that is,
\begin{equation}\label{Xn_def}
X_{k}^{(n)}=\sum_{j=1}^{X_{k-1}^{(n)}}\xi_{k,j}^{(n)}+Y_{k-1}^{(n)}, \qquad \text{$k\in\NN$ \ with \ $X_0^{(n)}=0$},\quad n\in\NN.
\end{equation}

For the rest of this paper,
\begin{equation*}
\text{
we assume that \ $m_n\in\RR_{++}$, $n\in\NN$, \ is a fixed sequence of positive real numbers.
}\end{equation*}

Next, we introduce five hypotheses.
Whenever we suppose that some of these hypotheses are satisfied, we will state them explicitly.
\begin{enumerate}[label=(\textbf{H\arabic*})]
\item\label{H1}
	We have that $\EE\big(\xi_{1,1}^{(n)}\big)<\infty$, $n\in\NN$.
\item\label{H2}
	We have that $\EE\big(Y_k^{(n)}\big)<\infty$, $k\in\ZZ_+$, $n\in\NN$,
	and there exists $L\in\DD(\RR_+,\RR)$ such that
	\begin{equation*}\label{E_conv}
	\limsup_{n\to\infty}\sup_{s\in[0,t]}\EE\big(m_n^{-1}Y_{\ns}^{(n)}\big)
	\leq L(t),	\qquad t\in\RR_+.
	\end{equation*}
\item\label{H3}
	There exists a $\RR_+$-valued stochastic process $(\cY_t)_{t\in\RR_+}$ with c\`adl\`ag sample paths such that
	\begin{equation*}
	\left(m_n^{-1}Y_\nt^{(n)}\right)_{t\in\RR_+}
	\distr
	\left(\cY_t\right)_{t\in\RR_+}
	\qquad \text{as $n\to\infty$.}
	\end{equation*}
\item\label{H4}
	We have that $\EE\big((\xi_{1,1}^{(n)})^2\big)<\infty$, $n\in\NN$, and $m_n^{-1}n\var\big(\xi_{1,1}^{(n)}\big)\to0$ as $n\to\infty$.
\item\label{H5}
	We have that $\EE\big((\xi_{1,1}^{(n)})^2\big)<\infty$, $n\in\NN$, and $m_n^{-1}\var\big(\xi_{1,1}^{(n)}\big)\to0$ as $n\to\infty$.
\end{enumerate}

Note that hypothesis \ref{H4} obviously implies that hypothesis \ref{H5} holds.
The reason for introducing both hypotheses \ref{H4} and \ref{H5} is that \ref{H4} is needed for the proof of Theorem \ref{main_1}, while \ref{H5} is needed for the proof of Theorem \ref{main_2}.

In the following, we give two examples for sequences of processes $(Y_k^{(n)})_{k\in\ZZ_+}$, $n\in\NN$, which satisfy hypotheses \ref{H2} and \ref{H3}.
\begin{Ex}\label{Ex_GWI}
Let $(Y_k^{(n)})_{k\in\ZZ_+}:=(Y_k)_{k\in\ZZ_+}$, $n\in\NN$,
 where $(Y_k)_{k\in\ZZ_+}$ is a Galton-Watson process with immigration, that is, $Y_0=0$ and
\[
	Y_k=\sum_{j=1}^{Y_{k-1}}\vare_{k,j}+\widetilde{\vare}_{k}, \qquad k\in\NN,
\]
 where $\{\vare_{k,j}, \widetilde{\vare}_k: k,j\in\ZZ_+\}$ are independent random variables
 such that $\{\vare_{k,j}: k,j\in\ZZ_+\}$ and $\{\widetilde{\vare}_k: k\in\ZZ_+\}$ both consist of i.i.d.\ $\ZZ_+$-valued random variables.
Furthermore, suppose that $\EE(\vare_{1,1}^2)<\infty$, $\EE(\vare_{1,1})=1$ (i.e., the critical case), $\EE(\widetilde{\vare}_1^2)<\infty$, and $m_n=n$, $n\in\NN$.
Next, we check that the sequence of stochastic processes $(Y_k^{(n)})_{k\in\ZZ_+}$, $n\in\NN$,
satisfy hypotheses \ref{H2} and \ref{H3}.
Since $\EE(Y_k)=k\EE(\widetilde{\vare}_1)$, $k\in\ZZ_+$, we have that
\[
\sup_{s\in[0,t]}\EE(n^{-1}Y_\ns^{(n)})=\sup_{s\in[0,t]}n^{-1}\ns\EE(\widetilde{\vare}_1)\leq t \EE(\widetilde{\vare}_1), \qquad t\in\RR_+,
\]
and thus hypothesis \ref{H2} is satisfied with $L(t):=t\EE(\widetilde{\vare}_1)$, $t\in\RR_+$.
Additionally, by Wei and Winnicki \cite[Theorem 2.1]{WW}, we have that
\[
\left(n^{-1}Y_\nt^{(n)}\right)_{t\in\RR_+}\distr\left(\cY_t\right)_{t\in\RR_+} \qquad \text{as $n\to\infty$,}
\]
where $(\cY_t)_{t\in\RR_+}$ is the pathwise unique strong solution of the stochastic differential equation (SDE)
 \begin{equation*}
  \dd \cY_t
  = \EE(\widetilde{\vare}_1) \, \dd t
    + \sqrt{\var(\vare_{1,1}) \, \cY_t^+} \, \dd \cW_t , \qquad t\in\RR_+,
  \qquad \cY_0 = 0 ,
 \end{equation*}
 where $(\cW_t)_{t\in\RR_+}$ is a standard Wiener process.
Consequently, since the stochastic process $(\cY_t)_{t\in\RR_+}$ has continuous (and thus c\`adl\`ag) sample paths,
 hypothesis \ref{H3} is also satisfied.
\end{Ex}

\begin{Ex}\label{Ex_Markov}
Let $h\in\NN$, and let $S\subset\ZZ_+$ be a finite set having $h$ elements.
Let $\bQ:=(q_{i,j})_{i,j\in S}\in\RR^{h\times h}$ be such that
 $q_{i,j}\geq0$, $i\neq j$, $i,j\in S$, and $q_{i,i}=-\sum_{j\in S\setminus\{i\}}q_{i,j}$, $i\in S$
 (i.e., a conservative rate matrix).
Let $\eta$ and $\eta_n$, $n\in\NN$, be $S$-valued random variables such that $\eta_n\distr\eta$ as $n\to\infty$. 
Let $n_0\in\NN$ be the smallest natural number such that
 $\bP_n:=\bI_h+n^{-1}\bQ\in\RR_+^{h\times h}$ holds for each $n\geq n_0$, $n\in\NN$.
Note that for $n\geq n_0$, $n\in\NN$, $\bP_n$ is a transition matrix,
 since $\bP_n\in\RR_{+}^{h\times h}$ and each row-sum of $\bP_n$ is equal to $1$
 (following from the fact that each row-sum of $\bQ$ is equal to $0$).
For each $n\in\{1,\dots,n_0-1\}$, let $(Y_k^{(n)})_{k\in\ZZ_+}$ be a constant $0$ process,
 and for each $n\geq n_0$, $n\in\NN$, let $(Y_k^{(n)})_{k\in\ZZ_+}$ be a Markov chain on $S$ with transition matrix $\bP_n$ and $Y_0^{(n)}\distre\eta_n$.
Supposing that $m_n:=1$, $n\in\NN$, the stochastic processes $(Y_k^{(n)})_{k\in\ZZ_+}$, $n\in\NN$,
 clearly satisfy \ref{H2}, since $\EE(Y_\nt^{(n)})\leq \max_{s\in S}s<\infty$, $t\in\RR_+$, $n\in\NN$,
 thus $L(t):=\max_{s\in S}s$, $t\in\RR_+$, is a suitable choice of the function $L$ in \ref{H2}.
Furthermore, \ref{H3} also holds with a continuous time Markov chain $(\cY_t)_{t\in\RR_+}$
 with rate matrix $\bQ$ and $\cY_0\distre\eta$.
Indeed, it is easy to see that for $s<t$, $s,t\in\RR_+$, and $x_1,x_2\in S$,
 by the fact that the transition probability function of $(\cY_t)_{t\in\RR_+}$
 takes the form $\ee^{t\bQ}$, $t\in\RR_+$
 (following from the finiteness of $S$ and the Kolmogorov differential equations), we have that
\begin{align*}
&\PP(Y_\nt^{(n)}=x_2\mid Y_\ns^{(n)}=x_1)
	=(\bP_n^{\nt-\ns})_{x_1,x_2}
	=\left((\bI_h+n^{-1}\bQ)^{\nt-\ns}\right)_{x_1,x_2}\\
&\qquad\quad
	=\left(\left((\bI_h+n^{-1}\bQ)^n\right)^{\frac{\nt-\ns}{n}}\right)_{x_1,x_2}
	\to(\ee^{\bQ(t-s)})_{x_1,x_2}=\PP(\cY_t=x_2\mid\cY_s=x_1) \qquad \text{as $n\to\infty$.}
\end{align*}
Using the above convergence,
 the multi-step multiplication formula (for the Markov processes
$(Y_k^{(n)})_{k\in\ZZ_+}$, $n\in\NN$, and $(\cY_t)_{t\in\RR_+}$),
 and the fact that $Y_0^{(n)}\distr\cY_0$ as $n\to\infty$,
 we can derive that
\[
\left(Y_\nt^{(n)}\right)_{t\in\RR_+}\distrf\left(\cY_t\right)_{t\in\RR_+} \qquad \text{as $n\to\infty$.}
\]
To show that \ref{H3} holds, by Ethier and Kurtz \cite[Chapter 3, Theorems 2.2 and 7.8]{EthKur},
it is enough to verify that $(Y_\nt^{(n)})_{t\in\RR_+}$, $n\in\NN$, is tight.
We show that the conditions of Billingsley \cite[Theorem 16.10]{Bil} (Aldous's tightness criterion) are satisfied.
First, it is clear that
\[
\lim_{K\to\infty}\limsup_{n\to\infty}\PP\bigg(\sup_{t\in[0,T]}|Y_\nt^{(n)}|\geq K\bigg)=0,\qquad\text{$T\in\RR_+$,}
\]
since $|Y_\nt^{(n)}|\leq \max_{s\in S}s<\infty$, $t\in\RR_+$, $n\in\NN$, almost surely.
Next, let us fix $n\in\NN$ and $T\in\RR_{++}$, and let $\tau$ be a discrete stopping time
 for $(Y_\nt^{(n)})_{t\in\RR_+}$ (see Billingsley \cite[page 176]{Bil}) such that $\tau\leq T$ almost surely.
Note that for each $k\in\NN$ and $n\geq n_0$, $n\in\NN$, we have that
\begin{align*}
\PP(Y_k^{(n)}\ne Y_{k-1}^{(n)})
	&=\sum_{i\in S}\PP(Y_k^{(n)}\ne i\mid Y_{k-1}^{(n)}=i)\PP(Y_{k-1}^{(n)}=i)
	=\sum_{i\in S}(1-(\bP_n)_{i,i})\PP(Y_{k-1}^{(n)}=i)\\
	&=\sum_{i\in S}(1-(1+n^{-1}q_{i,i}))\PP(Y_{k-1}^{(n)}=i)
	=-\sum_{i\in S}n^{-1}q_{i,i}\PP(Y_{k-1}^{(n)}=i)\\
	&\leq\sum_{i\in S}n^{-1}\max_{j\in S}|q_{j,j}|\PP(Y_{k-1}^{(n)}=i)
	=n^{-1}\max_{j\in S}|q_{j,j}|\sum_{i\in S}\PP(Y_{k-1}^{(n)}=i)
	=n^{-1}\max_{j\in S}|q_{j,j}|.
\end{align*}
Let $\vare>0$, and note that, since $(Y_\nt^{(n)})_{t\in\RR_+}$ is
 an $S$-valued stochastic process, using the above inequality for $\PP(Y_k^{(n)}\ne Y_{k-1}^{(n)})$, for all $\delta>0$, we have
\begin{align*}
\PP(|Y_{\lfloor n(\tau+\delta)\rfloor}^{(n)}-Y_{\lfloor n\tau\rfloor}^{(n)}|\geq\vare\mid\tau)
	&\leq\PP(Y_{\lfloor n(\tau+\delta)\rfloor}^{(n)}\neq Y_{\lfloor n\tau\rfloor}^{(n)}\mid\tau)\\
	&\leq\PP(\text{$(Y_\nt^{(n)})_{t\in\RR_+}$ jumps in $(\tau,\tau+\delta]$}\mid\tau)\\
	&\leq\sum_{j=1}^{\lfloor n\delta\rfloor+1}\PP\left(
			\text{$(Y_\nt^{(n)})_{t\in\RR_+}$
			 has a jump at $\frac{\lfloor n\tau\rfloor +j}{n}$
		}\,\bigg|\,\tau\right)
\end{align*}
\begin{align*}
	&=\sum_{j=1}^{\lfloor n\delta\rfloor+1}\PP(
			Y_{\lfloor n\tau\rfloor +j}^{(n)}\ne Y_{\lfloor n\tau\rfloor +j-1}^{(n)}\mid\tau)\\
	&\leq\sum_{j=1}^{\lfloor n\delta\rfloor+1}
			n^{-1}\max_{s\in S}|q_{s,s}|
	=C\frac{\lfloor n\delta\rfloor+1}{n}
	\leq C(\delta+n^{-1}),
\end{align*}
where $C:=\max_{s\in S}|q_{s,s}|<\infty$.
Taking expectations of both sides of the inequality above, we get that
\[
\PP(|Y_{\lfloor n(\tau+\delta)\rfloor}^{(n)}-Y_{\lfloor n\tau\rfloor}^{(n)}|\geq\vare)
	\leq C(\delta+n^{-1}).
\]
This shows that all the conditions of Billingsley \cite[Theorem 16.10]{Bil} hold.
Consequently, we have that $(Y_\nt^{(n)})_{t\in\RR_+}\distr(\cY_t)_{t\in\RR_+}$, i.e., \ref{H3} holds.
\end{Ex}

In the next remark, we formulate some simple observations regarding our hypotheses.

\begin{Rem}\label{hyp_remark}
\textup{(i)}
	Hypothesis \textup{\ref{H2}} holds if $\EE\big(Y_k^{(n)}\big)<\infty$, $k\in\ZZ_+$, $n\in\NN$,
	 and there exists a function $\beta\in\DD(\RR_+,\RR)$ such that
	\begin{equation}\label{H2_beta}
	\left(\EE\big(m_n^{-1}Y_\nt^{(n)}\big)\right)_{t\in\RR_+}\Jto\left(\beta(t)\right)_{t\in\RR_+} \qquad \text{as $n\to\infty$.}
	\end{equation}
	In this case, we may choose $L(t):=\sup_{s\in[0,t+1]}\beta(s)$, $t\in\RR_+$.
	Indeed, if there exists such a function $\beta$, then $\beta$ is non-negative, and, since the mapping
	 $\DD(\RR_+,\RR)\ni f\mapsto \left(\sup_{s\in[0,t]}f(s)\right)_{t\in\RR_+}\in\DD(\RR_+,\RR)$ is continuous
	 (see, e.g., Whitt \cite[Theorem 6.1]{Whitt} or Ethier and Kurtz \cite[Problem 3.11.25]{EthKur}),
	 we have that
	\[
	\left(\sup_{s\in[0,t]}\EE\big(m_n^{-1}Y_\ns^{(n)}\big)\right)_{t\in\RR_+}
	\Jto \left(\sup_{s\in[0,t]}\beta(s)\right)_{t\in\RR_+} \qquad \text{as $n\to\infty$.}
	\]
	Since $(\sup_{s\in[0,t]}\beta(s))_{t\in\RR_+}$ is monotone increasing,
	 it has at most countably many discontinuity points.
	Therefore, for all $t\in\RR_+$, there exists $t_0\in[t,t+1]$ such that
	  the function $(\sup_{s\in[0,t]}\beta(s))_{t\in\RR_+}$ is continuous at $t_0$.
	Then, by Billingsley \cite[Theorem 16.6 or page 124]{Bil}, we have that
	  $\pi_{t_0}$ is continuous at $(\sup_{s\in[0,t]}\beta(s))_{t\in\RR_+}$, and thus
	\[
	\sup_{s\in[0,t]}\EE(m_n^{-1}Y_{\ns}^{(n)})
		\leq\sup_{s\in[0,t_0]}\EE(m_n^{-1}Y_{\ns}^{(n)})
		\to\sup_{s\in[0,t_0]}\beta(s)
		\leq\sup_{s\in[0,t+1]}\beta(s) \qquad \text{as $n\to\infty$},
	\]
	 and hence \ref{H2} is satisfied with the function $L$ defined above.
	Note also that if $\beta$ is continuous, then, by part \textup{(i)} of Lemma \ref{Jto_basic},
	 \eqref{H2_beta} is equivalent to
	\begin{equation}\label{H2_beta_cont}
	\left(\EE(m_n^{-1}Y_\nt^{(n)})\right)_{t\in\RR_+}\lu\left(\beta(t)\right)_{t\in\RR_+} \qquad \text{as $n\to\infty$.}
	\end{equation}

\textup{(ii)}
	Assume that $\EE(Y_k^{(n)})<\infty$, $k\in\ZZ_+$, $n\in\NN$,
	 the hypothesis \ref{H3} holds, and \eqref{H2_beta} is satisfied with some $\beta\in\DD(\RR_+,\RR)$.
	Then, in general, it is not true that $\beta(t)=\EE(\cY_t)$, $t\in\RR_+$.
	Indeed, consider the random variables $U_n$, $n\in\NN$,
	such that $\PP(U_n=n)=n^{-1}$ and $\PP(U_n=0)=1-n^{-1}$, $n\in\NN$,
	and define $Y_k^{(n)}:=U_n$, $k\in\ZZ_+$, $n\in\NN$.
	We have that $U_n\stoch 0$ as $n\to\infty$, yielding that 
	$U_n\distr 0$ as $n\to\infty$, and thus
	$(Y_\nt^{(n)})_{t\in\RR_+}\distr(0)_{t\in\RR_+}$ as $n\to\infty$.
	Hence \ref{H3} holds with $m_n:=1$, $n\in\NN$, and $\cY_t:=0$, $t\in\RR_+$.
	At the same time, we have that $\EE(U_n)=1$, $n\in\NN$, yielding that
	$(\EE(Y_\nt^{(n)}))_{t\in\RR_+}\lu(1)_{t\in\RR_+}$ as $n\to\infty$,
	and therefore \eqref{H2_beta} holds with $m_n=1$, $n\in\NN$,
	and $\beta(t):=1$, $t\in\RR_+$.
	However, $\EE(\cY_t)=0$, $t\in\RR_+$, and thus
	$(\beta(t))_{t\in\RR_+}$ does not coincide with $(\EE(\cY_t))_{t\in\RR_+}$.
	For another more involved counter-example,
	see Jacod and Shiryaev \cite[Chapter VII, Remark 3.19]{JacShi}.

\textup{(iii)}
	Neither hypothesis \textup{\ref{H4}} nor \textup{\ref{H5}}
	necessarily implies that $m_n\to\infty$ as $n\to\infty$
	 (e.g., if $m_n=1$ and $\var(\xi_{1,1}^{(n)})=n^{-2}$, $n\in\NN$).
	However, if either \ref{H4} or \ref{H5} holds and $\limsup_{n\to\infty}m_n<\infty$,
	 then we have that $\lim_{n\to\infty}n\var(\xi_{1,1}^{(n)})=0$ or $\lim_{n\to\infty}\var(\xi_{1,1}^{(n)})=0$, respectively.
	Indeed, if $\limsup_{n\to\infty}m_n<\infty$, then there exist $M^*>0$ and $n_0\in\NN$
	 such that $m_n<M^* +1$ for all $n\geq n_0$, yielding that $m_n^{-1}n\var(\xi_{1,1}^{(n)})\geq (M^*+1)^{-1}n\var(\xi_{1,1}^{(n)})\geq0$
	and $m_n^{-1}\var(\xi_{1,1}^{(n)})\geq (M^*+1)^{-1}\var(\xi_{1,1}^{(n)})\geq 0$ for all $n\geq n_0$.
	The desired convergences follow by the sandwich theorem.
	In both cases $\lim_{n\to\infty}\var(\xi_{1,1}^{(n)}) = 0$, yielding that 
	 $\xi_{1,1}^{(n)} - \EE(\xi_{1,1}^{(n)}) \qmean 0$ as $n\to\infty$.
	If, in addition, we assume that there exists an $a\in\RR_+$ such that $\EE(\xi_{1,1}^{(n)})\to a$ as $n\to\infty$, 
	then in both cases we have that $\xi_{1,1}^{(n)}\qmean a$ as $n\to\infty$.

\textup{(iv)}
	It is worth noting a special case when we have a single GWII process \eqref{X_def}
	 instead of a sequence of GWII processes.
	In this case, hypothesis \ref{H4} means that $\EE(\xi_{1,1}^2)<\infty$,
	 and at least one of the conditions $\var(\xi_{1,1})=0$ or $\lim_{n\to\infty}n^{-1}m_n=\infty$ is satisfied.
	Similarly, hypothesis \ref{H5} means that $\EE(\xi_{1,1}^2)<\infty$,
	  and at least one of the conditions $\var(\xi_{1,1})=0$ or $\lim_{n\to\infty}m_n=\infty$ is satisfied.
\proofend
\end{Rem}

We end this section with two propositions.
The first one shows that in the case when \ref{H3} holds
 and $m_n$ does not converge to $\infty$ as $n\to\infty$
 (e.g., in the case of Example \ref{Ex_Markov}),
 the stochastic process $(\cY_t)_{t\in\RR_+}$ is a pure-jump process,
 i.e., $\cY_t=\cY_0+\sum_{s\leq t}\Delta\cY_s$, $t\in\RR_+$, almost surely,
 where the sum on the right hand side of the equality is absolutely convergent for all $t\in\RR_+$ almost surely
(for more on pure-jump processes, see, e.g., {\c{C}}{\i}nlar \cite[page 317]{Cin}).

\begin{Pro}\label{Pro_Y_jump}
Suppose that $\liminf_{n\to\infty}m_n<\infty$ and hypothesis \textup{\ref{H3}} holds.
Then $(\cY_t)_{t\in\RR_+}$ is a pure-jump process.
In addition, if $(\cY_t)_{t\in\RR_+}$ has continuous sample paths, then $\PP\left(\cY_t=\cY_0, t\in\RR_+\right)=1$.
\end{Pro}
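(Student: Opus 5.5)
Since $\liminf_{n\to\infty}m_n<\infty$, we can choose a subsequence $(n_\ell)_{\ell\in\NN}$ and a constant $M\in\RR_{++}$ with $m_{n_\ell}\leq M$ for all $\ell$. Along this subsequence \ref{H3} still holds. The processes $Z^{(\ell)}_t:=m_{n_\ell}^{-1}Y^{(n_\ell)}_{\lfloor n_\ell t\rfloor}$ take values in the lattice $m_{n_\ell}^{-1}\ZZ_+$, which has mesh at least $M^{-1}$. Passing to a further subsequence, we may assume $m_{n_\ell}\to m\in\RR_+$. We want to exclude $m=0$ (then the values lie in $m_{n_\ell}^{-1}\ZZ_+$, whose nonzero points escape to infinity), but this is not needed for the structure argument. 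By Skorokhod's representation theorem we may assume $Z^{(\ell)}\to\cY$ almost surely in $J_1$.

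The key deterministic fact we prove is the following. Let $f_\ell\in\DD(\RR_+,\RR)$ be step functions that are piecewise constant with jumps of size at least $\delta>0$ (here $\delta=M^{-1}$, since two distinct points of $m_{n_\ell}^{-1}\ZZ_+$ differ by at least $m_{n_\ell}^{-1}\geq M^{-1}$). If $f_\ell\Jto f$, then on every $[0,T]$ with $T$ a continuity point of $f$, the function $f$ is a step function with finitely many jumps, each of size at least $\delta$. We show this in two steps.

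\emph{Finitely many jumps.} By $J_1$ convergence there are time changes $\lambda_\ell$, uniformly close to the identity, with $\sup_{[0,T]}|f_\ell\circ\lambda_\ell-f|\to0$. Hence $f_\ell\circ\lambda_\ell$ is eventually within $\delta/3$ of $f$. Because $f$ is c\`adl\`ag, its oscillation on small intervals away from finitely many points is below $\delta/3$. On such an interval, $f_\ell\circ\lambda_\ell$ has oscillation below $\delta$, so it cannot jump there, since any jump has size at least $\delta$. So $f_\ell\circ\lambda_\ell$ has a bounded number of jumps on $[0,T]$, say at most $K$, with $K$ independent of $\ell$.

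\emph{Passing to the limit.} Extract a subsequence on which the jump times of $f_\ell\circ\lambda_\ell$ converge and the constant values on the intervals between them converge. The uniform limit $f$ is then piecewise constant with at most $K$ jumps. Finally, $f(t)=f(0)+\sum_{s\leq t}\Delta f(s)$.

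Applying this fact pathwise to $\omega\mapsto\cY(\omega)$ on the almost sure event, with $T\to\infty$ along continuity points, gives $\cY_t=\cY_0+\sum_{s\le t}\Delta\cY_s$ for all $t$. The sum is a finite sum on compacts, hence absolutely convergent, so $\cY$ is pure-jump. For the second claim, if $\cY$ has continuous paths then $\Delta\cY_s=0$ for all $s$ almost surely, so $\cY_t=\cY_0$ for all $t$ almost surely.

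The main obstacle is the uniform bound on the number of jumps. It must be handled carefully near jump times of $f$, where $f_\ell\circ\lambda_\ell$ might split one jump of $f$ into several lattice steps. We handle this by covering $[0,T]$ with finitely many intervals on each of which the oscillation of $f$, taken over the open interior and accounting for a single jump, is small. Inside each such interval, the uniform closeness to within $\delta/3$ allows at most one jump of $f_\ell\circ\lambda_\ell$ near the jump of $f$, because two lattice jumps in quick succession would create a value differing by at least $\delta$ from both one-sided limits. The measurability of the event in question is avoided because we work on the Skorokhod almost sure representation, which has the same law as $\cY$.
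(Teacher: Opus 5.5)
Your route is correct and genuinely different from the paper's. The paper never argues pathwise. It invokes Jacod--Shiryaev (Chapter VI, Proposition 3.16) to get joint convergence of $m_{n_\ell}^{-1}Y^{(n_\ell)}_{\lfloor n_\ell t\rfloor}$ together with $\sum_{s\le t}g(\Delta\,\cdot\,)$, where $g$ is continuous, vanishes near $0$ and equals the identity on $\{|x|\ge\delta\}$. It then shows that the set $F_\delta$ of c\`adl\`ag functions whose jumps are $0$ or at least $\delta$ in absolute value is closed, so $\cY\in F_\delta$ a.s.\ by the Portmanteau theorem. Finally, via the continuous mapping theorem, it identifies $\cY-\cY_0-\sum_{s\le\cdot}\Delta\cY_s$ as the distributional limit of the identically zero process.

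You instead use a Skorokhod representation plus a deterministic lemma: $J_1$-limits of step functions with all jumps at least $\delta$ are step functions on compacts. Your version is more elementary and self-contained, and it yields the step-function structure directly. The paper's version only ever handles closed sets and continuous or Borel maps on $\DD(\RR_+,\RR)$, so measurability comes for free.

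Your deterministic lemma can be shortened. Take a partition $0=t_0<\dots<t_k=T$ such that the oscillation of $f$ on each $[t_{i-1},t_i)$ is below $\delta/3$. Once $\sup_{[0,T]}|f_\ell\circ\lambda_\ell-f|<\delta/3$, the step function $f_\ell\circ\lambda_\ell$ has oscillation $<\delta$ there. Hence it has no jump in $(t_{i-1},t_i)$, so it is constant on $[t_{i-1},t_i)$. Since the partition does not depend on $\ell$, $f$ is constant there as well. This removes the split-jump worry and the extraction of converging jump times, and $T$ need not be a continuity point.

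Your final remark is wrong, though: the Skorokhod representation does not avoid measurability, it needs it twice.
\begin{itemize}
\item The copies $\cU^{(\ell)}$ only share the laws of $Z^{(\ell)}$. To apply your lemma to $\cU^{(\ell)}(\omega)$, you must know that the set $A_\ell$ of functions constant on each $[k/n_\ell,(k+1)/n_\ell)$ with values in $m_{n_\ell}^{-1}\ZZ_+$ is Borel.
\item Let $S$ be the set of c\`adl\`ag functions that are step functions on every compact. To pass from $\cU\in S$ a.s.\ back to $\cY\in S$ a.s., you need $S\in\cB(\DD(\RR_+,\RR))$.
\end{itemize}
Both hold via countably many projections. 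For $A_\ell$, use right-continuity and the values at rational times. For $S$, right-continuity again gives
\[
S=\bigcap_{N\in\NN}\bigcup_{K\in\NN}\bigcap_{m\in\NN}\Big\{f:\ \#\big\{j\in\{1,\dots,2^m\}: f(jN2^{-m})\neq f((j-1)N2^{-m})\big\}\leq K\Big\}.
\]
With this added, your proof is complete.
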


Finally, we give sufficient conditions under which \eqref{H2_beta}
 holds with $\beta(t):=\EE(\cY_t)$, $t\in\RR_+$
(note that this is not always the case, see part \textup{(ii)} of Remark \ref{hyp_remark}).
For a similar result on the convergence of the expectation functions
of a sequence of c\`adl\`ag processes with independent increments,
see Jacod and Shiryaev \cite[Chapter VII, Proposition 3.18]{JacShi}.

\begin{Pro}\label{Pro_H2_UI}
Suppose that the random variables
 $\sup_{t\in[0,T]}m_n^{-1}Y_\nt^{(n)}$, $n\in\NN$, are uniformly integrable for all $T\in\RR_{++}$, that is,
 \begin{equation}\label{Y_ui}
	\lim_{K\to\infty}\sup_{n\in\NN}\EE\left(\bigg(\sup_{t\in[0,T]}m_n^{-1}Y_\nt^{(n)}\bigg)\bone_{\left\{\sup_{t\in[0,T]}m_n^{-1}Y_\nt^{(n)}>K\right\}}\right)=0 \qquad \text{for all $T\in\RR_{++}$,}
 \end{equation}
and that \textup{\ref{H3}} holds such that $(\cY_t)_{t\in\RR_+}$ has continuous sample paths.
Then $\EE(\cY_t)<\infty$, $t\in\RR_+$,
 and \eqref{H2_beta} holds with the continuous function $\RR_+\ni t \mapsto \beta(t):=\EE(\cY_t)$.
\end{Pro}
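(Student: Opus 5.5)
Since $\cY$ has continuous sample paths, Remark \ref{hyp_remark}(i) together with part (i) of Lemma \ref{Jto_basic} reduces \eqref{H2_beta} to the locally uniform convergence \eqref{H2_beta_cont}. So I plan to prove three things in order: finiteness of $\EE(\cY_t)$, continuity of $\beta(t):=\EE(\cY_t)$, and
\[
\sup_{t\in[0,T]}\left|\EE\big(m_n^{-1}Y_\nt^{(n)}\big)-\EE(\cY_t)\right|\to0 \qquad \text{as } n\to\infty, \quad T\in\RR_{++}.
\]

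First, the projection $\pi_t$ is continuous at every continuous function. By \ref{H3} and the continuous mapping theorem, $m_n^{-1}Y_\nt^{(n)}\distr\cY_t$ for each $t$. Similarly, $f\mapsto\sup_{s\in[0,T]}f(s)$ is continuous at continuous $f$, so $Z_n:=\sup_{t\in[0,T]}m_n^{-1}Y_\nt^{(n)}\distr Z:=\sup_{t\in[0,T]}\cY_t$. Uniform integrability \eqref{Y_ui} gives $\sup_n\EE(Z_n)<\infty$. Fatou's lemma in its weak-convergence form, $\EE(Z)\leq\liminf_n\EE(Z_n)$, then yields $\EE(\sup_{t\in[0,T]}\cY_t)<\infty$, and in particular $\EE(\cY_t)<\infty$. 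Continuity of $\beta$ follows by dominated convergence: $\cY$ is continuous and $0\leq\cY_s\leq\sup_{u\in[0,T]}\cY_u$, which is integrable.

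For the locally uniform convergence I will use a Skorokhod representation. By \ref{H3}, Skorokhod's representation theorem (the space $\DD(\RR_+,\RR)$ being Polish) produces processes $\widetilde{\cY}^{(n)}\distre(m_n^{-1}Y_\nt^{(n)})_{t\in\RR_+}$ and $\widetilde{\cY}\distre\cY$ on a common probability space with $\widetilde{\cY}^{(n)}\Jto\widetilde{\cY}$ almost surely. Since $\widetilde{\cY}$ is continuous, part (i) of Lemma \ref{Jto_basic} upgrades this to locally uniform convergence almost surely, so
\[
V_n:=\sup_{t\in[0,T]}\big|\widetilde{\cY}^{(n)}_t-\widetilde{\cY}_t\big|\to0 \qquad \text{almost surely}.
\]
Moreover $V_n\leq\sup_{t\in[0,T]}\widetilde{\cY}^{(n)}_t+\sup_{t\in[0,T]}\widetilde{\cY}_t$. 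The first term on the right has the same law as $Z_n$, hence is uniformly integrable by \eqref{Y_ui}; the second is integrable by the first step. Therefore $(V_n)$ is uniformly integrable, Vitali's theorem gives $\EE(V_n)\to0$, and consequently
\[
\sup_{t\in[0,T]}\left|\EE\big(m_n^{-1}Y_\nt^{(n)}\big)-\EE(\cY_t)\right|\leq\EE(V_n)\to0.
\]

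The main obstacle is this interchange of the supremum with the expectation. A pointwise argument via uniform integrability only gives convergence for each fixed $t$, and uniformity in $t$ would then need equicontinuity. The Skorokhod coupling combined with uniform integrability of the running supremum avoids that issue. The other point to check carefully is the measurability of $\sup_{t\in[0,T]}$ on $\DD$, which holds because the supremum can be taken over the rationals in $[0,T]$ together with $T$.
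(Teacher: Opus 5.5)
Your proposal is correct and follows essentially the same route as the paper: a Skorokhod representation, upgrading $J_1$ convergence to locally uniform convergence because the limit is continuous, and uniform integrability of $\sup_{t\in[0,T]}|\cU^{(n)}_t-\cU_t|$ through domination by $\sup_{t\in[0,T]}\cU^{(n)}_t+\sup_{t\in[0,T]}\cU_t$, followed by Vitali's theorem. The differences are cosmetic. You obtain $\EE(\sup_{t\in[0,T]}\cY_t)<\infty$ from Fatou's lemma for weak convergence before coupling, whereas the paper uses almost sure convergence of the suprema on the Skorokhod space plus uniform integrability. You also prove continuity of $\beta$ by dominated convergence rather than by the paper's uniform-integrability argument for the local oscillation of $\cU$.
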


\section{Main results}\label{Section_conv_results}

This section contains our main results.

\begin{Thm}\label{main_1}
For each $n\in\NN$, let $(X_k^{(n)})_{k\in\ZZ_+}$ be given by \eqref{Xn_def},
and assume that \textup{\ref{H1}--\ref{H4}} hold such that $(\cY_t)_{t\in\RR_+}$ has continuous sample paths and $\cY_0=0$.
Furthermore, assume that there exists an $a\in[0,1)$ such that $\EE(\xi_{1,1}^{(n)})\to a$ as $n\to\infty$.
Then
\begin{equation}\label{main_1_X_conv}
\left(m_n^{-1}\begin{bmatrix}
	Y_\nt^{(n)}\\[1mm]
	X_\nt^{(n)}
	\end{bmatrix}\right)_{t\in\RR_+}
\distr	\left(\begin{bmatrix}
	\cY_t\\
	\frac{1}{1-a}\cY_t
	\end{bmatrix}\right)_{t\in\RR_+}
 \qquad \text{as $n\to\infty$.}
\end{equation}
\end{Thm}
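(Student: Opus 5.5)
Write $\mu_n:=\EE(\xi_{1,1}^{(n)})$. The plan is to decompose $X^{(n)}$ into a deterministic linear filter of the immigration plus a martingale-type error. Iterating \eqref{Xn_def} gives
\[
X_k^{(n)}=\sum_{j=0}^{k-1}\mu_n^{k-1-j}Y_j^{(n)}+\sum_{j=1}^{k}\mu_n^{k-j}M_j^{(n)},
\qquad M_j^{(n)}:=\sum_{\ell=1}^{X_{j-1}^{(n)}}\big(\xi_{j,\ell}^{(n)}-\mu_n\big).
\]
The $M_j^{(n)}$ are martingale differences with respect to the natural filtration generated by the $\xi$'s and by the whole immigration sequence; this uses the independence of $\{\xi_{k,j}^{(n)}\}$ and $\{Y_k^{(n)}\}$. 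We have $\EE\big((M_j^{(n)})^2\big)=\var(\xi_{1,1}^{(n)})\,\EE(X_{j-1}^{(n)})$.

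\textbf{Step 1 (error term).} By \ref{H1}--\ref{H2}, $\EE(X_k^{(n)})=\sum_{j<k}\mu_n^{k-1-j}\EE(Y_j^{(n)})$. Since $\mu_n\to a<1$, for large $n$ we have $\mu_n\le b<1$, so $\sup_{k\le nT}\EE(m_n^{-1}X_k^{(n)})\le (L(T)+1)/(1-b)$. Set $Z_k:=\sum_{j\le k}\mu_n^{k-j}M_j^{(n)}$, which satisfies $Z_k=\mu_n Z_{k-1}+M_k^{(n)}$. Then $\sup_{k\le nT}|Z_k|\le \frac{1}{1-b}\cdot 2\sup_{k\le nT}\big|\sum_{j\le k}M_j^{(n)}\big|$. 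Alternatively one can bound $\sup_k|Z_k|\le \sup_k |\sum_{j\le k}\mu_n^{-j}M_j|\mu_n^{k}$ via Abel summation. I would use the cleaner route: $|Z_k|\le \sum_{i\le k}b^{k-i}|M_i|$ is too crude, so Abel summation with $S_k=\sum_{j\le k}M_j$ gives $Z_k=S_k-(1-\mu_n)\sum_{j<k}\mu_n^{k-1-j}S_j$, hence $\sup_{k\le nT}|Z_k|\le 2\sup_{k\le nT}|S_k|$. Doob's inequality then yields
\[
\EE\Big(\sup_{k\le nT}m_n^{-2}S_k^2\Big)\le 4m_n^{-2}\var(\xi_{1,1}^{(n)})\sum_{k\le nT}\EE(X_{k-1}^{(n)})\le C_T\, m_n^{-1}n\var(\xi_{1,1}^{(n)})\to0
\]
by \ref{H4}. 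So $m_n^{-1}\sup_{t\le T}|Z_\nt|\stoch0$.

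\textbf{Step 2 (filter of immigration).} It remains to show that $\big(m_n^{-1}Y^{(n)}_\nt,\ m_n^{-1}\sum_{j<\nt}\mu_n^{\nt-1-j}Y_j^{(n)}\big)$ converges in distribution to $\big(\cY_t,\cY_t/(1-a)\big)$. Since $\cY$ is continuous, I would use Skorokhod representation, or the version of the continuous mapping theorem in Appendix \ref{CMT}, applied to the maps $\Phi_n(f)(t):=\sum_{i=1}^{\nt}\mu_n^{i-1}f\big((\nt-i)/n\big)$. The deterministic claim is: if $f_n\Jto f$ with $f$ continuous (so $f_n\lu f$ by Lemma \ref{Jto_basic}), then $\Phi_n(f_n)\lu f/(1-a)$. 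Split the sum at $i\le K$ and $i>K$. For $i\le K$, $f_n((\nt-i)/n)$ is uniformly close to $f(t)$ by uniform continuity of $f$ on $[0,T]$ and the fact that $f(0)=0$, which handles small $t$ where $\nt<K$. For $i>K$ the tail is bounded by $b^K\sup_{[0,T]}|f_n|/(1-b)$. Also $\sum_{i\le K}\mu_n^{i-1}\to(1-a^K)/(1-a)$. Taking first $n\to\infty$ and then $K\to\infty$ gives the claim. Joint convergence follows because both coordinates are continuous images of the same process, and the limit is continuous, so Slutsky in $\DD(\RR_+,\RR^2)$ with the negligible $Z$ term finishes the argument.

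\textbf{Main obstacle.} The main difficulty is Step 2's justification of the continuous mapping step with $n$-dependent maps, in particular the measurability of $\Phi_n$ and the handling of the boundary $\nt<K$, where $\cY_0=0$ is needed. I expect the paper's appendix version of the CMT to cover exactly this. The moment bound in Step 1 also needs $\EE(X)$ controlled uniformly, which \ref{H2} supplies.
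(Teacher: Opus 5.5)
Your proposal is correct and is essentially the paper's proof. It uses the same decomposition of $X_{\nt}^{(n)}$ into a geometric filter of the immigration plus a filtered martingale part, the same deterministic locally uniform lemma for the filter fed into Lemma \ref{Conv2Funct} (with $\Phi_n=\cG_{n,\mu_n}$, measurable by Lemma \ref{cG_measurable}), and the same $L_2$ maximal bound of order $m_n^{-1}n\var(\xi_{1,1}^{(n)})$ for the error. The differences are cosmetic: you use $\cY_0=0$ inside the deterministic lemma and truncate at $K$, whereas the paper subtracts $Y_0^{(n)}$ and sums by parts. Also, the bound $|Z_k|\le(1-\mu_n)^{-1}\max_{j\le k}|M_j^{(n)}|$ that you dismissed as too crude is exactly what the paper uses; combined with $\EE\max_{j}(M_j^{(n)})^2\le\sum_{j}\EE\big((M_j^{(n)})^2\big)$ it already gives the same rate, so your Abel summation and Doob step are valid but not needed.
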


\begin{Rem}
The condition $\cY_0=0$ in Theorem \ref{main_1} is not a technical requirement.
Since $X_0^{(n)}=0$, $n\in\NN$, we have that $m_n^{-1}X_0^{(n)}\distr 0$ as $n\to\infty$.
If \eqref{main_1_X_conv} holds, then it also holds that
 $m_n^{-1}X_0^{(n)}\distr\frac{1}{1-a}\cY_0$ as $n\to\infty$.
Consequently, we have that $\frac{1}{1-a}\cY_0\distre0$, and, since $a\in[0,1)$,
 this implies that $\cY_0=0$.
\proofend
\end{Rem}

\begin{Thm}\label{main_2}
For each $n\in\NN$, let $(X_k^{(n)})_{k\in\ZZ_+}$ be given by \eqref{Xn_def},
and assume that \textup{\ref{H1}--\ref{H3}} and \textup{\ref{H5}} hold.
Furthermore, assume that there exist $\gamma\in\RR$ and $\gamma_n\in\RR$, $n\in\NN$,
such that $\gamma_n\to\gamma$ as $n\to\infty$ and $\EE(\xi_{1,1}^{(n)})=1+\frac{\gamma_n}{n}$, $n\in\NN$.
Then
\begin{equation}\label{main_2_X_conv}
\left(\begin{bmatrix}
	m_n^{-1}Y_\nt^{(n)}\\[1mm]
	(nm_n)^{-1}X_\nt^{(n)}
	\end{bmatrix}\right)_{t\in\RR_+}
\distr	\left(\begin{bmatrix}
	\cY_t\\
	\int_0^t\ee^{\gamma(t-s)}\cY_s\,\dd s
	\end{bmatrix}\right)_{t\in\RR_+}
 \qquad \text{as $n\to\infty$.}
\end{equation}
\end{Thm}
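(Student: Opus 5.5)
Write $\varrho_n:=\EE(\xi_{1,1}^{(n)})=1+\gamma_n/n$. The plan is to compare $X^{(n)}$ with the deterministic-offspring recursion $\widetilde X_k^{(n)}:=\varrho_n\widetilde X_{k-1}^{(n)}+Y_{k-1}^{(n)}$, $\widetilde X_0^{(n)}=0$. This recursion has the explicit solution
\[
\widetilde X_k^{(n)}=\sum_{j=0}^{k-1}\varrho_n^{\,k-1-j}Y_j^{(n)}.
\]
The proof then has two parts: show that $(nm_n)^{-1}(X_\nt^{(n)}-\widetilde X_\nt^{(n)})$ is negligible uniformly on compacts in probability, and show that $(nm_n)^{-1}\widetilde X^{(n)}_\nt$ is a continuous functional of $m_n^{-1}Y^{(n)}_{\lfloor n\cdot\rfloor}$ up to a vanishing error.

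\textbf{Step 1 (deterministic part).} Write
\[
(nm_n)^{-1}\widetilde X_\nt^{(n)}=\int_0^{\nt/n}\varrho_n^{\,\nt-1-\ns}\,m_n^{-1}Y^{(n)}_\ns\,\dd s .
\]
Since $\varrho_n^{\,\nt-1-\ns}\to\ee^{\gamma(t-s)}$ uniformly for $0\le s\le t\le T$, this differs from
\[
\Phi(m_n^{-1}Y^{(n)}_{\lfloor n\cdot\rfloor})(t),\qquad \Phi(f)(t):=\int_0^t\ee^{\gamma(t-s)}f(s)\,\dd s ,
\]
by at most $o(1)\int_0^T m_n^{-1}Y^{(n)}_\ns\,\dd s+n^{-1}\sup_{s\le T}m_n^{-1}Y^{(n)}_\ns$.

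This error is negligible. The supremum is tight by \ref{H3}. Alternatively, \ref{H2} bounds the expectation of the integral by $TL(T)$ for large $n$, which gives convergence in $L_1$ to $0$ of the error, locally uniformly.

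Next, the map $f\mapsto(f,\Phi(f))$ from $\DD(\RR_+,\RR)$ to $\DD(\RR_+,\RR^2)$ is continuous in the $J_1$ topology, with $\Phi(f)$ continuous. The paper's appendix lemmas on continuity of integral mappings can be invoked here, and the continuous mapping theorem then gives the joint convergence towards $(\cY,\int_0^\cdot \ee^{\gamma(\cdot-s)}\cY_s\dd s)$. A Slutsky-type argument handles the second coordinate, since its perturbation is locally uniformly small and the limit coordinate is continuous.

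\textbf{Step 2 (martingale part, main obstacle).} Set $D_k:=X_k^{(n)}-\widetilde X_k^{(n)}$. Then $D_k=\varrho_nD_{k-1}+M_k$ with
\[
M_k:=\sum_{j=1}^{X_{k-1}^{(n)}}\bigl(\xi^{(n)}_{k,j}-\varrho_n\bigr).
\]
Conditionally on the immigration and the past, $M_k$ has mean zero and variance $X^{(n)}_{k-1}\var(\xi^{(n)}_{1,1})$. Hence $D_k=\sum_{i\le k}\varrho_n^{\,k-i}M_i$, so $\varrho_n^{-k}D_k$ is a martingale with respect to the filtration generated by the offspring and the whole immigration process; this uses independence of $\{\xi\}$ and $\{Y\}$.

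Doob's inequality, together with $\varrho_n^{\pm k}$ being bounded for $k\le nT$, gives
\[
\EE\sup_{k\le nT}D_k^2\le C\sum_{i\le nT}\var(\xi^{(n)}_{1,1})\,\EE X^{(n)}_{i-1}.
\]
The expectations satisfy $\EE X_i^{(n)}=\sum_{j<i}\varrho_n^{\,i-1-j}\EE Y^{(n)}_j\le C\,n\,m_n L(T)$ by \ref{H2}, for large $n$ and $i\le nT$. This is where \ref{H1}, \ref{H2} and the first-moment formula from Appendix \ref{GWII_moments} enter. Consequently
\[
(nm_n)^{-2}\,\EE\sup_{k\le nT}D_k^2\le C\,(nm_n)^{-2}\,nT\,\var(\xi^{(n)}_{1,1})\,n m_n L(T)=C'\,m_n^{-1}\var(\xi^{(n)}_{1,1})\to0
\]
by \ref{H5}.

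The delicate point is the precise conditioning and filtration argument justifying the martingale property when $Y^{(n)}$ is dependent. I would handle it by conditioning on the whole of $\{Y^{(n)}_k\}$, which is legitimate since it is independent of the offspring variables. The first moment bound also requires care with $\sup_s$ versus indexwise bounds in \ref{H2}.

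\textbf{Step 3 (combining).} Steps 1 and 2, together with the fact that locally uniform perturbations that vanish in probability preserve $J_1$ convergence in distribution, yield \eqref{main_2_X_conv}.
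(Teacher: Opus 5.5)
Your proposal is correct and follows the paper's proof in all essentials. It uses the same split of $X^{(n)}_{\nt}$ into $\sum_{j=0}^{\nt-1}a_n^{\nt-1-j}Y_j^{(n)}$ plus the autoregressive martingale term $\sum_{j=1}^{\nt}a_n^{\nt-j}M_j^{(n)}$, the same Doob-plus-second-moment bound via \ref{H2} and \ref{H5} making the latter negligible after division by $nm_n$, and the same Slutsky-type combination; the differences are minor. You use the martingale $\varrho_n^{-k}D_k$ for a filtration containing all of $Y^{(n)}$, where the paper uses the submartingale $(1\vee a_n^{-k})|S_k|$ for the natural filtration. In Step 1 you apply the ordinary continuous mapping theorem to the fixed map $\Phi$ and control the discretization error by tightness, whereas the paper proves the deterministic Lemma~\ref{cr_lemma} ($f_n\Jto f$ implies $n^{-1}\cG_{n,a_n}(f_n)\lu\Phi(f)$) and invokes Lemma~\ref{Conv2Funct}(ii).
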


\begin{Rem}\label{cr_rem}
\textup{(i)}
Denote by $(\cX_t)_{t\in\RR_+}$ the second coordinate process of the limit process in \eqref{main_2_X_conv},
that is,
\[
\cX_t
	:=\int_0^t\ee^{\gamma(t-s)}\cY_s\,\dd s
	=\ee^{\gamma t}\int_0^t\ee^{-\gamma s}\cY_s\,\dd s, \qquad t\in\RR_+.
\]
Then $(\cX_t)_{t\in\RR_+}$ satisfies ($\PP$-almost surely) the differential equation
\begin{equation*}
\dd\cX_t=\left(\gamma\cX_t+\cY_t\right)\,\dd t, \qquad t\in\RR_+, \qquad \cX_0=0.
\end{equation*}

\textup{(ii)}
The condition on $\EE(\xi_{1,1}^{(n)})$, $n\in\NN$, in Theorem \ref{main_2}
implies that $\EE(\xi_{1,1}^{(n)})\to1$ as $n\to\infty$.
Furthermore, if $\EE(\xi_{1,1}^{(n)})=1$, $n\in\NN$,
 then $\gamma_n=0$, $n\in\NN$, $\gamma=0$, and
 the limit stochastic process in \eqref{main_2_X_conv} takes the form $([\cY_t,\int_0^t\cY_s\,\dd s]^\top)_{t\in\RR_+}$.
If $(Y_k^{(n)})_{k\in\ZZ_+}$ is a sequence of i.i.d.\ $\ZZ_+$-valued random variables for each $n\in\NN$,
then this means that the corresponding sequence of GWI processes $(X_k^{(n)})_{k\in\ZZ_+}$, $n\in\NN$,
is nearly critical (see, e.g., Isp\'any \cite[Definition 1.1]{Isp}).
Similarly, if $\EE(\xi_{1,1}^{(n)})=1$, $n\in\NN$, then the corresponding
 sequence of GWI processes is critical (see, e.g., Athreya and Ney \cite[V.3]{AN}).
\proofend
\end{Rem}

The next result establishes limit theorems for the expected value functions of the GWII processes in Theorems \ref{main_1} and \ref{main_2}.
\begin{Thm}\label{exp_thm}
For each $n\in\NN$, let $(X_k^{(n)})_{k\in\ZZ_+}$ be given by \eqref{Xn_def}.
Assume that \textup{\ref{H1}} holds, $\EE(Y_k^{(n)})<\infty$, $k\in\ZZ_+$, $n\in\NN$,
 and there exists a function $\beta\in\DD(\RR_+,\RR)$ such that \eqref{H2_beta} holds.
\begin{enumerate}[label=(\roman*)]
\item
	If there exists an $a\in[0,1)$ such that $\EE(\xi_{1,1}^{(n)})\to a$ as $n\to\infty$,
	 and $\beta$ is continuous with $\beta(0)=0$, then
	\begin{equation*}\label{sc_exp}
	\left(m_n^{-1}\EE\left(
	X_\nt^{(n)}
	\right)\right)_{t\in\RR_+}
	\lu
	\left(	
	\frac{1}{1-a}\beta(t)\right)_{t\in\RR_+}
	 \qquad \text{as $n\to\infty$.}
	\end{equation*}
\item
	If there exist
	 $\gamma\in\RR$ and $\gamma_n\in\RR$, $n\in\NN$,
	 such that $\gamma_n\to\gamma$ as $n\to\infty$
	 and $\EE(\xi_{1,1}^{(n)})=1+\frac{\gamma_n}{n}$, $n\in\NN$, then
	\begin{equation*}\label{cr_exp}
	\left(
	(nm_n)^{-1}\EE\left(
	X_\nt^{(n)}\right)\right)_{t\in\RR_+}
		\lu
	\left(
	\int_0^t\ee^{\gamma(t-s)}\beta(s)\,\dd s\right)_{t\in\RR_+}
	 \qquad \text{as $n\to\infty$.}
	\end{equation*}
\end{enumerate}
\end{Thm}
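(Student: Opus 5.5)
Taking expectations in \eqref{Xn_def} and using the independence of $X_{k-1}^{(n)}$ from $\{\xi_{k,j}^{(n)}:j\in\NN\}$ gives the linear recursion $\EE(X_k^{(n)})=\mu_n\EE(X_{k-1}^{(n)})+\EE(Y_{k-1}^{(n)})$, where $\mu_n:=\EE(\xi_{1,1}^{(n)})$. Since $X_0^{(n)}=0$, this yields
\[
\EE\big(X_k^{(n)}\big)=\sum_{j=0}^{k-1}\mu_n^{\,k-1-j}\,\EE\big(Y_j^{(n)}\big),\qquad k\in\NN.
\]
Put $b_n(t):=\EE(m_n^{-1}Y_\nt^{(n)})$. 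By Remark \ref{hyp_remark}(i), the functions $b_n$ are locally bounded uniformly in $n$ for large $n$: there is $L$ with $\limsup_n\sup_{s\le t}b_n(s)\le L(t)$. Both parts then come down to deterministic analysis of this convolution sum.

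For part (i), $\beta$ is continuous, so by \eqref{H2_beta_cont} we have $b_n\lu\beta$. I write
\[
m_n^{-1}\EE\big(X_\nt^{(n)}\big)=\sum_{i=0}^{\nt-1}\mu_n^{\,i}\,b_n\Big(\tfrac{\nt-1-i}{n}\Big).
\]
Fix $a<a'<1$, so that $\mu_n\le a'$ for large $n$. The tail $\sum_{i\ge N}\mu_n^i\sup_{s\le T}b_n(s)\le a'^N L(T)/(1-a')+o(1)$ is small uniformly in $t\in[0,T]$ once $N$ is large. For each fixed $i<N$, the point $(\nt-1-i)/n$ differs from $t$ by at most $(N+1)/n$. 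Using $b_n\lu\beta$ and the uniform continuity of $\beta$ on $[0,T]$, each such term converges to $a^i\beta(t)$ uniformly in $t\in[0,T]$. There is a boundary issue: when $\nt<N$ the sum is truncated. This is exactly where $\beta(0)=0$ enters, because then $\beta(t)$ is uniformly small for $t\le N/n$, so truncation costs little. Finally $\sum_{i<N}a^i\beta(t)$ is within $a^N\sup_{[0,T]}\beta/(1-a)$ of $\beta(t)/(1-a)$, which completes part (i).

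For part (ii), I write the sum as a Riemann-type integral:
\[
(nm_n)^{-1}\EE\big(X_\nt^{(n)}\big)=\int_0^{\nt/n}\mu_n^{\,\nt-1-\lfloor ns\rfloor}\,b_n(s)\,\dd s,
\]
which holds because $b_n(s)=\EE(m_n^{-1}Y_j^{(n)})$ for $s\in[j/n,(j+1)/n)$. Since $\mu_n=1+\gamma_n/n$ with $\gamma_n\to\gamma$, the kernel $\mu_n^{\,\nt-1-\ns}$ converges to $\ee^{\gamma(t-s)}$ uniformly on $\{0\le s\le t\le T\}$ (via $\log(1+x)=x+O(x^2)$) and is uniformly bounded there. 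Here $\beta$ is only càdlàg, so $b_n\Jto\beta$ does not give uniform convergence. It does give $b_n(s)\to\beta(s)$ at every continuity point $s$ of $\beta$, hence for Lebesgue-a.e.\ $s$ (Billingsley), and the $b_n$ are uniformly bounded on $[0,T]$ for large $n$. Therefore $\int_0^T|b_n(s)-\beta(s)|\,\dd s\to0$ by dominated convergence. Combining this with the uniform kernel convergence and the fact that $\nt/n$ differs from $t$ by at most $1/n$ with bounded integrand, I get
\[
\sup_{t\le T}\bigg|(nm_n)^{-1}\EE\big(X_\nt^{(n)}\big)-\int_0^t\ee^{\gamma(t-s)}\beta(s)\,\dd s\bigg|\to0.
\]

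The main obstacle is the boundary and uniformity bookkeeping in part (i). Uniform convergence of the geometric-weighted sum needs continuity of $\beta$ together with $\beta(0)=0$ near $t=0$, and I would treat it with the head/tail split described above. A secondary technical point is that $b_n$ is uniformly bounded only for $n$ large, which is harmless since only the limit matters.
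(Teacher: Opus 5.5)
Your proof is correct, but in both parts it takes a different route from the paper.

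In part (i), the paper first splits off the contribution of $\EE(Y_0^{(n)})$, which vanishes after scaling because $\beta(0)=0$. It then invokes Lemma \ref{sc_lemma}, whose proof is an Abel summation: $f(j/n)-f(0)$ is telescoped, the sums are interchanged, and the remainder $\sum_{\ell}q_n^{\nt-\ell}\big(f(\ell/n)-f((\ell-1)/n)\big)$ is controlled by the modulus of continuity of $\beta$. You instead truncate the geometric series at a fixed $N$. Each of the finitely many head terms converges uniformly, by $b_n\lu\beta$ and the uniform continuity of $\beta$. You use $\beta(0)=0$ only to control the incomplete sums for $\nt<N$.

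In part (ii), the paper uses the same integral representation you write down (Lemma \ref{sum_int}). Inside Lemma \ref{cr_lemma}, however, it derives the convergence from Skorokhod-space tools: continuity of time changes (Whitt), products with locally uniformly convergent continuous factors, and continuity of $f\mapsto\int_0^{\cdot}f(s)\,\dd s$ from the $J_1$ topology to locally uniform convergence. You replace all of this with three elementary ingredients: pointwise convergence of $b_n$ at continuity points of $\beta$, dominated convergence for $\int_0^T|b_n-\beta|\,\dd s$, and a uniform estimate on the kernel $\mu_n^{\nt-1-\ns}$ over $\{0\leq s\leq t\leq T\}$.

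The paper's choice pays off because Lemmas \ref{sc_lemma} and \ref{cr_lemma} are deterministic statements about arbitrary sequences $f_n\to f$. That is exactly what the continuous-mapping argument (Lemma \ref{Conv2Funct}) in the proofs of Theorems \ref{main_1} and \ref{main_2} needs, so Theorem \ref{exp_thm} comes almost for free. Your argument is more elementary and self-contained. Since part (ii) uses only local boundedness and a.e.\ pointwise convergence of $f_n$, both consequences of $J_1$ convergence, your argument would adapt directly to prove those two lemmas as well.
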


In the next remark, we point out the fact that an application
of Theorem \ref{main_2} for a critical GWI process having non-degenerate
(i.e., not identically one) offspring distribution results in a $2$-dimensional
identically zero limit process.

\begin{Rem}\label{GWI_1_rem}
Let $(X_k)_{k\in\ZZ_+}$ be a critical GWI process such that
$\EE(\xi_{1,1}^2)<\infty$, $\var(\xi_{1,1})>0$, and $\EE(Y_0)<\infty$.
Let $(X_k^{(n)})_{k\in\ZZ_+}:=(X_k)_{k\in\ZZ_+}$, $n\in\NN$,
and $m_n:=n$, $n\in\NN$.
We check that the assumptions of Theorem \ref{main_2} hold with the choices
$L(t):=0$, $t\in\RR_+$, $\cY_t:=0$, $t\in\RR_+$, $\gamma_n:=0$, $n\in\NN$,
and $\gamma:=0$.
Since $\EE(\xi_{1,1})=1$, \ref{H1} readily holds.
Since
\[
\sup_{s\in[0,t]} \EE\big(m_n^{-1} Y^{(n)}_{\ns}\big) = n^{-1}\EE(Y_0)\to 0 \qquad \text{as $n\to\infty$ for all $t\in\RR_+$,}
\]
we have that \ref{H2} holds with the given function $L$.
Next, we check that
 \begin{align}\label{help_05}
\sup_{t\in[0,T]}m_n^{-1} Y^{(n)}_{\nt}\stoch 0 \qquad \text{as $n\to\infty$ for all $T\in\RR_+$.}
 \end{align}
In what follows, let $T\in\RR_+$ be fixed.
Then, for all $\vare>0$ and $n\in\NN$, we have that
\begin{align*}
\PP\Big( \sup_{t\in[0,T]}m_n^{-1} Y^{(n)}_{\nt} \geq \vare\Big)
& = \PP\Big(\max_{k\in\{0,\ldots,\nT\}} Y_k \geq m_n\vare\Big)
= 1 - (1 - \PP(Y_0 \geq m_n\vare))^{\nT+1} \\
&=1 - \left(\left(1 - \frac{n\PP( Y_0 \geq m_n\vare)}{n}\right)^{n}\right)^{\frac{\nT+1}{n}}
 \to 1-(\ee^{0})^T = 0 \qquad \text{as $n\to\infty$,}
\end{align*}
since $\lim_{n\to\infty} n\PP(Y_0 \geq m_n\vare) =\lim_{n\to\infty} n\PP(Y_0\geq n\vare) =0$ for all $\vare>0$, which can be checked as follows.
Since $\PP(Y_0\in\ZZ_+)=1$, we have $\sum_{m=1}^\infty \PP(Y_0\geq m)= \EE(Y_0)<\infty$, where
$\PP(Y_0\geq m)\downarrow 0$ as $m\to\infty$.
Using Cauchy condensation test, it implies that  $\sum_{m=0}^\infty 2^m \PP(Y_0\geq 2^m)<\infty$,
and hence $2^m \PP(Y_0\geq 2^m)\to 0$ as $m\to\infty$.
If $n\in\NN$ is such that $2^m \leq n < 2^{m+1}$ with some $m\in\ZZ_+$, then
$n\PP(Y_0\geq n) \leq 2^{m+1}\PP(Y_0 \geq 2^m)$, yielding that $n\PP(Y_0 \geq n\vare) \to 0$ as $n\to\infty$
as desired.
Using \eqref{help_05} and that $(0)_{t\in\RR_+}\distr(0)_{t\in\RR_+}$ as $n\to\infty$,
 by Jacod and Shiryaev \cite[Chapter VI, Lemma 3.31]{JacShi}, we have that
\[
\left(m_n^{-1}Y_\nt^{(n)}\right)_{t\in\RR_+}
	=(0)_{t\in\RR_+}+\left(m_n^{-1}Y_\nt^{(n)}\right)_{t\in\RR_+}	
	\distr(0)_{t\in\RR_+} \qquad \text{as $n\to\infty$.}
\]
Consequently, we get that \ref{H3} holds with the given $\cY$.
The hypothesis  \ref{H5} readily holds, since $m_n\to\infty$ as $n\to\infty$.
Then Theorem \ref{main_2} implies that
\begin{equation*}
\left(\begin{bmatrix}
m_n^{-1}Y_\nt^{(n)}\\[1mm]
(nm_n)^{-1}X_\nt^{(n)}
\end{bmatrix}\right)_{t\in\RR_+}
=
\left(\begin{bmatrix}
n^{-1}Y_\nt^{(n)}\\[1mm]
n^{-2}X_\nt^{(n)}
\end{bmatrix}\right)_{t\in\RR_+}
\distr	\left(\begin{bmatrix}
0\\
0
\end{bmatrix}\right)_{t\in\RR_+}
\qquad \text{as $n\to\infty$.}
\end{equation*}

We mention that the previous argument remains true with
 other choices of $m_n\in\RR_{++}$, $n\in\NN$, different from $m_n=n$, $n\in\NN$, which
 was used only to show that $n\PP(Y_0\geq m_n\vare)\to0$ as $n\to\infty$.
Suppose that there exists $\delta\in\RR_+$ such that $\EE(Y_0^{1+\delta})<\infty$,
 and $m_n\in\RR_{++}$, $n\in\NN$, is such that $\limsup_{n\to\infty}\frac{n}{m_n^{1+\delta}}=:K<\infty$.
Then we have that $m_n\to\infty$ as $n\to\infty$ and $n\leq (K+1)m_n^{1+\delta}$ for sufficiently large $n\in\NN$.
Consequently, we get that
\begin{align*}
\limsup_{n\to\infty}n\PP(Y_0\geq m_n\vare)
	&=\limsup_{n\to\infty}n\PP(Y_0^{1+\delta}\geq m_n^{1+\delta}\vare^{1+\delta})\\
	&\leq (K+1)\limsup_{n\to\infty}m_n^{1+\delta}\PP(Y_0^{1+\delta}\geq m_n^{1+\delta}\vare^{1+\delta})\\
	&\leq (K+1)\limsup_{n\to\infty}m_n^{1+\delta}
		\PP\left(Y_0^{1+\delta}\geq\lfloor m_n^{1+\delta}\rfloor\vare^{1+\delta}\right)\\
	&\leq (K+1)\limsup_{n\to\infty}m_n^{1+\delta}
		\PP\left( \lfloor Y_0^{1+\delta}\rfloor+1 \geq\lfloor m_n^{1+\delta}\rfloor\vare^{1+\delta}\right)\\
	&\leq (K+1)\limsup_{n\to\infty}(\lfloor m_n^{1+\delta}\rfloor+1)
		\PP( \lfloor Y_0^{1+\delta}\rfloor+1 \geq\lfloor m_n^{1+\delta}\rfloor\vare^{1+\delta})
	=0,
\end{align*}
where the equality follows from $\lim_{\ell\to\infty} \ell\PP( \lfloor Y_0^{1+\delta}\rfloor+1\geq \ell )=0$,
 since $\PP(\lfloor Y_0^{1+\delta}\rfloor+1\in\ZZ_+)=1$ and $\EE(\lfloor Y_0^{1+\delta}\rfloor+1)\leq\EE(Y_0^{1+\delta})+1<\infty$.
\proofend
\end{Rem}

Next, we specialize Theorem \ref{main_2} for $2$-type GWI processes to obtain a
 particular case of Theorem \textup{2.2} of Barczy et al.\ \cite{BarBezPap2}.
Now we recall the notion of $2$-type GWI processes.
Let \ $\big\{\bxi_{k,j,1}, \bxi_{k,j,2}, \, \bvare_k: k, j \in \NN \big\}$
 \ be independent $\ZZ_+^2$-valued random vectors
such that $\big\{\bxi_{k,j,i} : k, j \in \NN\big\}$ \ for each
 \ $i \in \{1, 2\}$, \ and \ $\{\bvare_k : k \in \NN\}$ \ consist of identically distributed random vectors.
We use the notation
\[
    \bxi_{k,j,1} := \begin{bmatrix}
                    \xi_{k,j,1,1} \\
                    \xi_{k,j,1,2}
                   \end{bmatrix} , \qquad
    \bxi_{k,j,2} := \begin{bmatrix}
                    \xi_{k,j,2,1} \\
                    \xi_{k,j,2,2}
                   \end{bmatrix} , \qquad
   \bvare_k := \begin{bmatrix}
                \vare_{k,1} \\
                \vare_{k,2}
               \end{bmatrix}, \qquad k,j\in\NN.
\]
Let $\bX_0:=[X_{0,1},X_{0,2}]^\top:=[0,0]^\top$ and
\begin{align}\label{2X_def}
   \bX_k:=\begin{bmatrix} X_{k,1}\\ X_{k,2}\end{bmatrix}
   = \sum_{j=1}^{X_{k-1,1}}
      \begin{bmatrix} \xi_{k,j,1,1} \\ \xi_{k,j,1,2}\end{bmatrix}
     + \sum_{j=1}^{X_{k-1,2}}
        \begin{bmatrix} \xi_{k,j,2,1} \\ \xi_{k,j,2,2}\end{bmatrix}
     + \begin{bmatrix} \vare_{k,1}\\ \vare_{k,2} \end{bmatrix} , \qquad
   k \in \NN.
 \end{align}
We say that $(\bX_k)_{k\in\ZZ_+}$ is a $2$-type GWI process.

\begin{Cor}\label{main_4}
Let $(\bX_k)_{k\in\ZZ_+}$ be a $2$-type GWI process defined by \eqref{2X_def}.
Suppose that the following conditions hold:
\begin{itemize}
\item
	$\EE(\xi_{1,1,1,1}^2)<\infty$, $\EE(\xi_{1,1,2,2}^2)<\infty$, $\EE(\vare_{1,1}^2)<\infty$, and $\EE(\vare_{1,2}^{1+\delta})<\infty$ for some $\delta\in\RR_{++}$,
\item
	$\EE(\xi_{1,1,1,1})=1$, $\EE(\xi_{1,1,2,2})=1$, and $\xi_{1,1,2,1}\ase0$,
\item
	there exists $a_{2,1}\in\RR_+$ such that $\xi_{1,1,1,2}\ase a_{2,1}$.
\end{itemize}
Then
	\begin{equation}\label{2X_conv}
		\left(	\begin{bmatrix}
			n^{-1}X_{\nt,1}\\
			n^{-2}X_{\nt,2}\\
		\end{bmatrix}	\right)_{t\in\RR_+}
		\distr
		\left(	\begin{bmatrix}
			\cX_{t,1}\\
			a_{2,1}\int_0^t\cX_{s,1}\,\dd s
		\end{bmatrix}\right)_{t\in\RR_+} \qquad \text{as $n\to\infty$,}
	\end{equation}
where the stochastic process $(\cX_{t,1})_{t\in\RR_+}$
 is the pathwise unique strong solution of the SDE
\begin{equation}\label{CRI_SDE}
\dd\cX_{t,1}=\EE(\vare_{1,1})\,\dd t+\sqrt{\var(\xi_{1,1,1,1})\cX_{t,1}^+}\,\dd\cW_{t,1}, \qquad t\in\RR_+,\qquad \cX_{0,1}=0,
\end{equation}
where $(\cW_{t,1})_{t\in\RR_+}$ is a standard Wiener process.
\end{Cor}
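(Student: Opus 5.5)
The plan is to deduce Corollary \ref{main_4} from Theorem \ref{main_2}, applied to the second coordinate. Following part (iii) of Remark \ref{model_remark} with $p=2$ and $i=2$, and using $\xi_{1,1,2,1}\ase0$, we have $X_{k,1}=\sum_{j=1}^{X_{k-1,1}}\xi_{k,j,1,1}+\vare_{k,1}$. Thus $(X_{k,1})_{k\in\ZZ_+}$ is a critical single-type GWI process. The second coordinate satisfies
\[
X_{k,2}=\sum_{j=1}^{X_{k-1,2}}\xi_{k,j,2,2}+Y_{k-1},\qquad Y_{k-1}:=a_{2,1}X_{k-1,1}+\vare_{k,2}.
\]
Here we used $\xi_{k,j,1,2}\ase a_{2,1}$. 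The process $(Y_k)_{k\in\ZZ_+}$ is independent of $\{\xi_{k,j,2,2}\}$, so $(X_{k,2})$ is a GWII process. Set $X^{(n)}_k:=X_{k,2}$, $Y^{(n)}_k:=Y_k$, $m_n:=n$, $\gamma_n=\gamma:=0$. We then verify \ref{H1}--\ref{H3} and \ref{H5}. \ref{H1} and \ref{H5} are immediate, since $\EE(\xi_{1,1,2,2})=1$, $\EE(\xi_{1,1,2,2}^2)<\infty$ and $m_n=n\to\infty$. For \ref{H2}, $\EE(X_{k,1})=k\EE(\vare_{1,1})$ gives $\sup_{s\in[0,t]}n^{-1}\EE(Y_{\ns})\leq a_{2,1}t\EE(\vare_{1,1})+n^{-1}\EE(\vare_{1,2})$. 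So $L(t):=a_{2,1}t\EE(\vare_{1,1})+\EE(\vare_{1,2})$ works.

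The key step is \ref{H3} with $\cY_t:=a_{2,1}\cX_{t,1}$, and more precisely the joint convergence needed at the end. By Wei and Winnicki \cite[Theorem 2.1]{WW}, exactly as in Example \ref{Ex_GWI}, we have $(n^{-1}X_{\nt,1})_{t\in\RR_+}\distr(\cX_{t,1})_{t\in\RR_+}$, with $\cX_{\cdot,1}$ the solution of \eqref{CRI_SDE}. For the immigration term $n^{-1}\vare_{\nt+1,2}$, the i.i.d.\ argument in Remark \ref{GWI_1_rem} applies with $m_n=n$ and $\EE(\vare_{1,2}^{1+\delta})<\infty$ (even $\EE(\vare_{1,2})<\infty$ suffices when $m_n=n$). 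It gives $\sup_{t\in[0,T]}n^{-1}\vare_{\nt+1,2}\stoch0$. Then Jacod and Shiryaev \cite[Chapter VI, Lemma 3.31]{JacShi} yields $(n^{-1}Y_{\nt})_{t\in\RR_+}\distr(a_{2,1}\cX_{t,1})_{t\in\RR_+}$. The limit is continuous and $\RR_+$-valued, so \ref{H3} holds.

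Theorem \ref{main_2} then gives
\[
\bigl(n^{-1}Y_{\nt},\,n^{-2}X_{\nt,2}\bigr)_{t\in\RR_+}\distr\Bigl(a_{2,1}\cX_{t,1},\,a_{2,1}\int_0^t\cX_{s,1}\,\dd s\Bigr)_{t\in\RR_+}.
\]
This is not yet \eqref{2X_conv}: the first coordinate there is $n^{-1}X_{\nt,1}$, not $n^{-1}Y_{\nt}$.

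This identification is the main obstacle. If $a_{2,1}>0$, I will write $n^{-1}X_{\nt,1}=a_{2,1}^{-1}n^{-1}Y_{\nt}-a_{2,1}^{-1}n^{-1}\vare_{\nt+1,2}$. The last term tends to $0$ uniformly on compacts in probability, so the continuous linear map $(y,x)\mapsto(a_{2,1}^{-1}y,x)$ combined with \cite[Chapter VI, Lemma 3.31]{JacShi} gives \eqref{2X_conv}. Because the limit is continuous, the perturbation lemma applies in the product Skorokhod space.

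If $a_{2,1}=0$, the second limit coordinate is $0$ and $n^{-2}X_{\nt,2}$ converges to $0$ in distribution by Theorem \ref{main_2}, hence uniformly on compacts in probability. Combining this with the Wei--Winnicki convergence of the first coordinate via the same lemma yields \eqref{2X_conv}.
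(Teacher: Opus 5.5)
Your proposal is correct and follows the paper's proof essentially step by step: you reduce $(X_{k,2})_{k\in\ZZ_+}$ to a GWII process with immigration $Y_k=a_{2,1}X_{k,1}+\vare_{k+1,2}$, obtain \ref{H3} from Wei--Winnicki together with Jacod--Shiryaev's Lemma VI.3.31, apply Theorem \ref{main_2} with $m_n=n$ and $\gamma_n=\gamma=0$, and finish with the case distinction between $a_{2,1}>0$ and $a_{2,1}=0$. The only deviation is that you make $n^{-1}\vare_{\nt+1,2}$ negligible via the maximum-of-i.i.d.\ tail argument of Remark \ref{GWI_1_rem}, whereas the paper uses a $(1+\delta)$-moment bound; as you note, your route shows that $\EE(\vare_{1,2})<\infty$ already suffices for this step.
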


We call attention to the fact that,
 under second order moment assumption on $\vare_{1,2}$,
 and supposing only that $\EE(\xi_{1,1,1,2})>0$ and $\EE(\xi_{1,1,1,2}^2)<\infty$
 (instead of $\xi_{1,1,1,2}\ase a_{2,1}$),
 the conclusion of Corollary \ref{main_4} was already proved in Barczy et al.\ \cite[Theorem 2.2]{BarBezPap2}.

\section{Preliminaries for the proofs}\label{Prelims}

In this section, we present some auxiliary results which will be useful in the proofs.
To improve readability, we will state the relevant facts
 only for the GWII process defined in \eqref{X_def},
 although they hold true for each GWII process defined in \eqref{Xn_def} as well
 after a few natural changes (e.g., writing $a_n$ instead of $a$, where the notations $a$ and $a_n$, $n\in\NN$, are introduced below in \eqref{Notation}).
For the same reason, we will only explicitly define
 the martingale differences \eqref{M_def} for the GWII process given in \eqref{X_def}.
In the proofs, we will denote by $(M_k^{(n)})_{k\in\ZZ_+}$, $n\in\NN$,
 the martingale differences corresponding to the GWII processes defined in \eqref{Xn_def},
defined analogously to \eqref{M_def}.

Let $(X_k)_{k\in\ZZ_+}$ be the GWII process defined in \eqref{X_def}, with $(Y_k)_{k\in\ZZ_+}$ its immigration process,
such that $\EE(\xi_{1,1}^2)<\infty$ and $\EE(Y_k)<\infty$, $k\in\ZZ_+$.
We will often make use of the natural filtration of the process $([Y_k,X_k])_{k\in\ZZ_+}$, which we denote by $(\cF_k)_{k\in\ZZ_+}$, where
\begin{equation}\label{filtration}
\cF_k:=\sigma\left(Y_0,X_0,\dots,Y_k,X_k\right), \qquad k\in\ZZ_+.
\end{equation}
Introduce the notations
\begin{equation}\label{Notation}
a:=\EE\left(\xi_{1,1}\right)\in\RR_+,
	\qquad
v:=\var\left(\xi_{1,1}\right)\in\RR_+,
\end{equation}
provided that the corresponding quantities exist and are finite.
When dealing with the GWII processes defined in \eqref{Xn_def}, for each $n\in\NN$,
we denote the corresponding filtration defined in \eqref{filtration} by $(\cF_k^{(n)})_{k\in\ZZ_+}$,
and the corresponding quantites defined in \eqref{Notation} by $a_n$ and $v_n$, respectively.

First, we introduce the martingale differences given by
\begin{equation}\label{M_def}
M_k:=
	X_k-\EE(X_k\mid\cF_{k-1}), \qquad k\in\NN,
\end{equation}
 with $M_0:=0$.
By \eqref{X_def}, using the tower rule, we get that
\begin{equation*}
    \EE(X_k\mid\cF_{k-1})
	=\EE\left(\sum_{j=1}^{X_{k-1}}\xi_{k,j}+Y_{k-1}\,\bigg|\,\cF_{k-1}\right)
	= \EE(\xi_{1,1}) X_{k-1}+Y_{k-1}, \qquad k\in\NN.
\end{equation*}
Hence, using \eqref{M_def} and \eqref{Notation}, we have that
\begin{equation*}\label{M_X}
M_k
	=X_k-aX_{k-1}-Y_{k-1}, \qquad k\in\NN.
\end{equation*}
Consequently, using $X_0=0$ and \eqref{M_def}, we obtain that
\begin{align}\label{X_alt}
\begin{split}
X_k
	&=aX_{k-1}+(M_k+Y_{k-1})
	=a^k X_0+\sum_{j=1}^ka^{k-j}\left(M_j+Y_{j-1}\right)\\
	&=\sum_{j=1}^ka^{k-j}Y_{j-1}+\sum_{j=1}^ka^{k-j}M_j
	=\sum_{j=0}^{k-1}a^{k-1-j}Y_{j}+\sum_{j=1}^ka^{k-j}M_j,
	\qquad k\in\NN.
\end{split}
\end{align}
It is easy to see that these martingale differences are uncorrelated,
since for each $i,j\in\NN$, $i<j$, we have
\begin{equation}\label{M_uncorr}
\EE(M_iM_j)
	=\EE(\EE(M_iM_j\mid\cF_{j-1}))
	=\EE(M_i\EE(M_j\mid\cF_{j-1}))
	=\EE(M_i0)=0.
\end{equation}
Some additional facts about the martingale differences $(M_k)_{k\in\ZZ_+}$,
such as moment estimates, can be found in Appendix \ref{GWII_moments}.

Let us define $S_0:=0$ and
\begin{equation}\label{S_def}
S_k:=\sum_{j=1}^ka^{k-j}M_j,\qquad k\in\NN.
\end{equation}
Note that
\begin{equation}\label{S_rec}
S_k=aS_{k-1}+M_k, \qquad k\in\NN,
\end{equation}
thus $(S_k)_{k\in\ZZ_+}$ is a heteroscedastic autoregressive process of order $1$.

The following lemma will be useful when dealing with stochastic processes
 whose structure is similar to the structure of $(S_k)_{k\in\ZZ_+}$.
\begin{Lem}\label{lemma_submartingale}
Suppose that $q\in\RR_{++}$ and $(Z_k)_{k\in\ZZ_+}$ is a stochastic process adapted to a filtration $(\cH_k)_{k\in\ZZ_+}$
such that $\EE(|Z_k|)<\infty$, $k\in\ZZ_+$, and
\begin{equation*}
\EE(Z_k\mid\cH_{k-1})=qZ_{k-1}, \qquad k\in\NN.
\end{equation*}
Then $((1\vee q^{-k})|Z_k|)_{k\in\ZZ_+}$ is a submartingale with respect to the filtration $(\cH_k)_{k\in\ZZ_+}$.
\end{Lem}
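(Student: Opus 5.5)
The plan is to check the two defining properties of a submartingale for $W_k:=(1\vee q^{-k})|Z_k|$, $k\in\ZZ_+$, with respect to $(\cH_k)_{k\in\ZZ_+}$. Adaptedness and integrability are immediate. Indeed, $W_k$ is a deterministic multiple of $|Z_k|$, which is $\cH_k$-measurable. Moreover, $\EE(W_k)=(1\vee q^{-k})\EE(|Z_k|)<\infty$. So the only real task is to show $\EE(W_k\mid\cH_{k-1})\geq W_{k-1}$ almost surely for each $k\in\NN$.

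The key step is the conditional Jensen inequality applied to the convex function $x\mapsto|x|$:
\[
\EE(|Z_k|\mid\cH_{k-1})\geq\big|\EE(Z_k\mid\cH_{k-1})\big|=q|Z_{k-1}|, \qquad k\in\NN,
\]
almost surely, where we used $q>0$. Multiplying by the deterministic non-negative constant $1\vee q^{-k}$ gives
\[
\EE(W_k\mid\cH_{k-1})\geq(1\vee q^{-k})\,q\,|Z_{k-1}|,
\]
so it remains to verify the elementary inequality $(1\vee q^{-k})q\geq 1\vee q^{-(k-1)}$.

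The last step is a case distinction on $q$.
\begin{itemize}
\item If $q\geq1$, then $q^{-k}\leq1$ and $q^{-(k-1)}\leq1$. The left-hand side equals $q\geq1$, which equals the right-hand side.
\item If $q\in(0,1)$, then $1\vee q^{-k}=q^{-k}$. The left-hand side equals $q^{-(k-1)}=1\vee q^{-(k-1)}$, with equality.
\end{itemize}
Hence $\EE(W_k\mid\cH_{k-1})\geq W_{k-1}$ almost surely, completing the argument.

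There is no genuine obstacle here. The only point needing care is this bookkeeping of the factor $1\vee q^{-k}$ in the two regimes of $q$. Note that in the regime $q\geq1$ the plain process $|Z_k|$ is already a submartingale, since the growth factor $q\geq1$ only helps.
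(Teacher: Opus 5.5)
Your proof is correct and follows essentially the same route as the paper: conditional Jensen for $|\cdot|$, then a case distinction $q<1$ versus $q\geq 1$ to compare the deterministic factors. The only difference is cosmetic: the paper divides by $q$ first and compares $q^{-1}\vee q^{-k}$ with $1\vee q^{-k}$, which is equivalent to your check $(1\vee q^{-k})\,q\geq 1\vee q^{-(k-1)}$.
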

\noindent\textbf{Proof.}
By assumption, we have that $(1\vee q^{-k})|Z_k|$ is integrable for each $k\in\ZZ_+$.
For all $q\in\RR_{++}$, by the conditional Jensen inequality, we have
\begin{align*}
(1\vee q^{-(k-1)})|Z_{k-1}|
	&=|(1\vee q^{-k+1})Z_{k-1}|
	=|(1\vee q^{-k+1})\EE(q^{-1}Z_{k}\mid\cH_{k-1})|\\
	&\leq(1\vee q^{-k+1})\EE(q^{-1}|Z_{k}|\mid\cH_{k-1})
	=\EE((q^{-1}\vee q^{-k})|Z_{k}|\mid\cH_{k-1})\\
	&\leq\EE\big((1\vee q^{-k})|Z_k|\mid\cH_{k-1}\big), \qquad k\in\NN,
\end{align*}
where at the last inequality we used that if $q\leq1$, then $1\leq q^{-1}\leq q^{-k}$ for each $k\in\NN$,
and if $q\geq1$, then $q^{-k}\leq q^{-1}\leq1$ for each $k\in\NN$.
\proofend

Note that, if the conditions of Lemma \ref{lemma_submartingale} are satisfied,
then $(q^{-k}|Z_k|)_{k\in\ZZ_+}$ is a submartingale with respect to the filtration $(\cH_k)_{k\in\ZZ_+}$ as well (following from the proof of Lemma \ref{lemma_submartingale}).

In order to facilitate the application of Lemma \ref{lemma_submartingale} in the proof of Theorem \ref{main_2},
 the following remark establishes that its conditions are satisfied by the stochastic process defined in \eqref{S_def}.
\begin{Rem}\label{remark_submartingale}
If $a=\EE(\xi_{1,1})>0$, then the stochastic process $(S_k)_{k\in\ZZ_+}$ defined in \eqref{S_def} satisfies the conditions of Lemma \ref{lemma_submartingale}
with $q:=a$ and the filtration $(\cF_k)_{k\in\ZZ_+}$ defined in \eqref{filtration}.
Indeed, it is clearly adapted to the filtration $(\cF_k)_{k\in\ZZ_+}$, and, by \eqref{S_rec},
\[
\EE(S_k\mid\cF_{k-1})
	=\EE(aS_{k-1}+M_k\mid\cF_{k-1})
	=aS_{k-1}+\EE(M_k\mid\cF_{k-1})
	=aS_{k-1},\qquad k\in\NN.
\]
Furthermore, by the Jensen inequality and Lemma \ref{A_1}, using that $X_0=0$, we have
\begin{align*}
\left(\EE(|M_k|)\right)^2
	&\leq\EE(M_k^2)
	=\var(M_k)
	=v\EE(X_{k-1})
	=v \sum_{\ell=1}^{k-1}a^{k-1-\ell}\EE(Y_{\ell-1})<\infty, \qquad k\in\NN,
\end{align*}
by assumption, and thus
$\EE(|S_k|)\leq\sum_{j=1}^k a^{k-j}\EE(|M_j|)<\infty$, $k\in\NN$, as desired.
\proofend
\end{Rem}

We will make use of the following lemma several times.
\begin{Lem}\label{sum_int}
For all $f:\ZZ_+\to\RR$ and $k,n\in\NN$, we have
\begin{equation*}\label{int_eq}
\sum_{j=0}^kf(j)
	=n\int_{0}^{\frac{k+1}{n}}f(\ns)\,\dd s.
\end{equation*}
\end{Lem}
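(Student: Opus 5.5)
The identity is elementary: I would prove it by splitting the integral over $[0,\frac{k+1}{n})$ into the $k+1$ subintervals $[\frac{j}{n},\frac{j+1}{n})$, $j\in\{0,\dots,k\}$, on each of which the integrand $s\mapsto f(\ns)$ is constant.

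First, for each $j\in\{0,\dots,k\}$ and $s\in[\frac{j}{n},\frac{j+1}{n})$ we have $j\leq ns<j+1$, hence $\ns=j$ and $f(\ns)=f(j)$. The function $s\mapsto f(\ns)$ is therefore a step function on $[0,\frac{k+1}{n}]$, taking the value $f(j)$ on $[\frac{j}{n},\frac{j+1}{n})$. It differs from this description only at the single right endpoint $s=\frac{k+1}{n}$, which has Lebesgue measure zero. In particular it is Riemann (and Lebesgue) integrable, and no measurability or integrability hypotheses on $f$ are needed, since $f$ is defined on $\ZZ_+$.

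Second, by additivity of the integral over the partition $0<\frac1n<\dots<\frac{k+1}{n}$, we obtain
\[
n\int_0^{\frac{k+1}{n}}f(\ns)\,\dd s
 =n\sum_{j=0}^{k}\int_{\frac{j}{n}}^{\frac{j+1}{n}}f(j)\,\dd s
 =n\sum_{j=0}^{k}\frac{1}{n}f(j)
 =\sum_{j=0}^{k}f(j).
\]

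There is no real obstacle. The only point requiring care is the endpoint bookkeeping, namely that the upper limit is $\frac{k+1}{n}$ rather than $\frac{k}{n}$, so that the last term $f(k)$ is included. The boundary points where $\ns$ jumps form a finite set and do not affect the integral.
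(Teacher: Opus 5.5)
Your proof is correct and is essentially the paper's argument. The paper writes $f(j)\frac{1}{n}=\int_{j/n}^{(j+1)/n} f(\lfloor ns\rfloor)\,\mathrm{d}s$ for each $j\in\{0,\dots,k\}$ and sums over $j$, which is your subinterval decomposition read in the other direction (your remark about the measure-zero right endpoint is a detail the paper leaves implicit).
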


\noindent\textbf{Proof.}
For all $f:\ZZ_+\to\RR$ and $k,n\in\NN$,
 by the properties of integrals, we have
\begin{align*}
\sum_{j=0}^kf(j)
	&=n\sum_{j=0}^kf(j)\frac{1}{n}
	=n\sum_{j=0}^kf(j)\int_{\frac{j}{n}}^{\frac{j+1}{n}}1\,\dd s\\
	&=n\sum_{j=0}^k\int_{\frac{j}{n}}^{\frac{j+1}{n}}f(\ns)\,\dd s
	=n\int_{0}^{\frac{k+1}{n}}f(\ns)\,\dd s,
\end{align*}
 as desired.
\proofend

In the proofs, we will use the fact that a certain mapping
 is Borel measurable with respect to the $\sigma$-algebra on the set of c\`adl\`ag functions
 (defined in Appendix \ref{CMT}).
For each $d,n\in\NN$ and all $q\in\RR$, we define $\cG_{n,q}:\DD(\RR_+,\RR^d)\to\DD(\RR_+,\RR^d)$,
\begin{equation}\label{cG_def}
\left(\cG_{n,q}(f)\right)(t):=\sum_{j=0}^{\nt-1} q^{\nt-1-j}f(j/n), \qquad t\in\RR_+, \quad f\in\DD(\RR_+,\RR^d).
\end{equation}
Note that the above mapping indeed maps c\`adl\`ag functions to c\`adl\`ag functions,
which follows from the fact that $(\nt)_{t\in\RR_+}$ is c\`adl\`ag, and the product of c\`adl\`ag functions is c\`adl\`ag.
In fact, it holds that $\cG_{n,q}(f)$ is c\`adl\`ag for all (not necessarily c\`adl\`ag functions) $f:\RR_+\to\RR^d$.

\begin{Lem}\label{cG_measurable}
For each $d,n\in\NN$ and all $q\in\RR$, the mapping $\cG_{n,q}$ defined in \eqref{cG_def} is Borel measurable.
\end{Lem}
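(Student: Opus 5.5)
We use the standard fact that the Borel $\sigma$-algebra of $\DD(\RR_+,\RR^d)$ with the Skorokhod $J_1$ topology coincides with the $\sigma$-algebra generated by the coordinate projections $\pi_t$, $t\in\RR_+$ (see, e.g., Ethier and Kurtz \cite[Chapter 3, Proposition 7.1]{EthKur} or Billingsley \cite[Theorem 16.6]{Bil}; this is presumably among the facts collected in Appendix \ref{CMT}). Under this identification, a mapping $\cG:\DD(\RR_+,\RR^d)\to\DD(\RR_+,\RR^d)$ is Borel measurable as soon as $\pi_t\circ\cG:\DD(\RR_+,\RR^d)\to\RR^d$ is Borel measurable for every $t\in\RR_+$. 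Indeed, the family of sets $B\subset\DD(\RR_+,\RR^d)$ with $\cG^{-1}(B)$ Borel is a $\sigma$-algebra, and it contains the generators $\pi_t^{-1}(A)$, $A\in\cB(\RR^d)$.

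So we fix $d,n\in\NN$, $q\in\RR$ and $t\in\RR_+$. By \eqref{cG_def} we have
\[
\pi_t(\cG_{n,q}(f))=\sum_{j=0}^{\nt-1}q^{\nt-1-j}\,\pi_{j/n}(f), \qquad f\in\DD(\RR_+,\RR^d),
\]
which is a finite linear combination, with deterministic coefficients depending only on $n,q,t$, of the projections $\pi_{j/n}$. If $\nt=0$, the sum is empty and the map is identically $\bzero$, which is measurable. Each $\pi_{j/n}$ is Borel measurable by the same fact quoted above (even though it need not be continuous). Finite linear combinations of Borel measurable $\RR^d$-valued maps are Borel measurable, because addition and scalar multiplication on $\RR^d$ are continuous and the map $f\mapsto(\pi_{0}(f),\dots,\pi_{(\nt-1)/n}(f))$ is measurable into $(\RR^d)^{\nt}$ with its product Borel $\sigma$-algebra. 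Hence $\pi_t\circ\cG_{n,q}$ is measurable for each $t$, and therefore $\cG_{n,q}$ is measurable.

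We also need the preliminary observation, already made in the text after \eqref{cG_def}, that $\cG_{n,q}$ actually takes values in $\DD(\RR_+,\RR^d)$. The only nontrivial ingredient is the identification of the Borel $\sigma$-algebra with the cylinder $\sigma$-algebra, which we cite rather than prove. The main point of care is to apply it on the target space as well as on the domain, since measurability of the target is what allows us to reduce to the coordinates $\pi_t\circ\cG_{n,q}$.
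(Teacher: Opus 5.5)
Your proposal is correct and follows essentially the same route as the paper. The paper also reduces measurability of $\cG_{n,q}$ to measurability of each $\pi_t\circ\cG_{n,q}$, using that the finite-dimensional sets generate $\cB(\DD(\RR_+,\RR^d))$; it treats the case $\nt=0$ separately and otherwise writes $\pi_t\circ\cG_{n,q}$ as a finite linear combination of the Borel measurable projections $\pi_{j/n}$.
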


\noindent\textbf{Proof.}
Let $d,n\in\NN$ and $q\in\RR$ be fixed.
Using that the finite dimensional sets in $\DD(\RR_+,\RR^d)$ generate the Borel $\sigma$-algebra on $\DD(\RR_+,\RR^d)$
(see, e.g., Jacod and Shiryaev \cite[Chapter VI, Theorem 1.14, part c)]{JacShi}), to check the Borel measurability of $\cG_{n,q}$
 it is enough to verify that the mapping $\pi_t\circ \cG_{n,q}:\DD(\RR_+,\RR^d)\to\RR^d$
 is Borel measurable for all $t\in\RR_+$, where $\pi_t$ is the natural projection onto $t$.
Note that $\pi_t$ is Borel measurable for all $t\in\RR_+$
(see, e.g., Billingsley \cite[part (ii) of Theorem 16.6]{Bil}).
Let $t\in\RR_+$ be fixed. 
If $\nt=0$, then, by definition, $(\pi_t\circ\cG_{n,q})(f)=\bzero$ for all $f\in\DD(\RR_+,\RR^d)$,
 and hence, in this case, $\pi_t\circ\cG_{n,q}$ is clearly a Borel measurable mapping.
If $\nt>0$, then, for all $f\in\DD(\RR_+,\RR^d)$, we have that
\begin{align*}
\left(\pi_t\circ\cG_{n,q}\right)(f)
	&=\sum_{j=0}^{\nt-1} q^{\nt-1-j}f(j/n)
	=\sum_{j=0}^{\nt-1} q^{\nt-1-j}\pi_{j/n}(f).
\end{align*}
The mappings $\pi_{j/n}$, $j\in\{0,1,\ldots,\nt-1\}$, are Borel measurable,
 and using that any linear combination of Borel measurable mappings is Borel measurable,
 we have that $\pi_t\circ\cG_{n,q}$ is Borel measurable.
This implies the Borel measurability of $\cG_{n,q}$.
\proofend

\section{Proofs of Propositions \ref{Pro_Y_jump} and \ref{Pro_H2_UI}}\label{Section_prop_proofs}
\noindent\textbf{Proof of Proposition \ref{Pro_Y_jump}.}
Suppose that $\liminf_{n\to\infty}m_n<\infty$ and hypothesis \ref{H3} holds.
Then we have that $\limsup_{n\to\infty}m_n^{-1}>0$, and let $\delta\in(0,1)$
 be such that $\delta<\limsup_{n\to\infty}m_n^{-1}$.
Let $(n_\ell)_{\ell\in\NN}$ be a subsequence in $\NN$
 such that $n_\ell\to\infty$ as $\ell\to\infty$ and $\delta<m_{n_\ell}^{-1}$, $\ell\in\NN$.
By the Portmanteau theorem (see, e.g., Billingsley \cite[Chapter 1, Theorem 2.1]{Bil}),
the convergence in \ref{H3} holds along each subsequence,
therefore we have that
\begin{equation}\label{Y_subs_conv}
(m_{n_\ell}^{-1}Y_{\lfloor n_\ell t\rfloor}^{(n_\ell)})_{t\in\RR_+}\distr(\cY_t)_{t\in\RR_+}
	\qquad \text{as $\ell\to\infty$.}
\end{equation}
Let $g:\RR\to\RR$ be the continuous function
\[
g(x):=\begin{cases}
	0, &\text{if $|x|<\delta/2$,}\\
	2x-\delta, &\text{if $x\in[\delta/2,\delta]$,}\\
	\delta+2x, &\text{if $x\in[-\delta,-\delta/2]$,}\\
	x, &\text{if $|x|>\delta$.}
	\end{cases}
\]
Note that $g$ is a continuous function on $\RR$ vanishing in a neighborhood of $0$.
Consequently, by Jacod and Shiryaev \cite[Chapter VI, Proposition 3.16]{JacShi},
we have that
\begin{equation}\label{Y_jump_conv1}
\left(\begin{bmatrix}
m_{n_\ell}^{-1}Y_{\lfloor n_\ell t\rfloor}^{(n_\ell)}\\
\sum_{s\leq t}g(\Delta (m_{n_\ell}^{-1}Y_{\lfloor n_\ell s\rfloor}^{(n_\ell)}))
\end{bmatrix}\right)_{t\in\RR_+}
	\distr
\left(\begin{bmatrix}
\cY_t\\
\sum_{s\leq t}g(\Delta \cY_s)
\end{bmatrix}\right)_{t\in\RR_+} \qquad \text{as $\ell\to\infty$.}
\end{equation}
Note that the sums in the convergence above are well-defined finite sums,
 since $g$ is identically $0$ on $(-\delta/2,\delta/2)$,
 and every c\`adl\`ag function can have only finitely many jumps
 having absolute value at least $\delta/2$ on any finite interval
 (see, e.g., Billingsley \cite[page 122]{Bil}).
Moreover, since $(Y_k^{(n)})_{k\in\ZZ_+}$ is $\ZZ_+$-valued for each $n\in\NN$,
 the absolute value of each jump of $(m_n^{-1}Y_\nt^{(n)})_{t\in\RR_+}$ is at least $m_n^{-1}$.
Thus, for each $\ell\in\NN$, each jump of
 $(m_{n_\ell}^{-1}Y_{\lfloor n_\ell s \rfloor}^{(n_\ell)})_{s\in\RR_+}$
 has absolute value at least $\delta$.
Furthermore, we check that each jump of $(\cY_t)_{t\in\RR_+}$ has absolute value at least $\delta$ as well.
Let
\[
F_\delta:=\{f\in\DD(\RR_+,\RR): |\Delta f(t)|\in\{0\}\cup[\delta,\infty), t\in\RR_+\},
\]
which is a closed set in $\DD(\RR_+,\RR)$.
Indeed, let $f_n\in F_\delta$, $n\in\NN$, so that there exists $f\in\DD(\RR_+,\RR)$
 such that $f_n\Jto f$ as $n\to\infty$.
If $f$ is continuous, then $\Delta f(t)=0$, $t\in\RR_+$, yielding that
 $f\in F_\delta$, and otherwise, let $t\in\RR_+$
 be such that $|\Delta f(t)|>0$.
By Jacod and Shiryaev \cite[Chapter VI, part a) of Proposition 2.1]{JacShi},
 there exists a sequence $t_n\in\RR_+$, $n\in\NN$
 such that $t_n\to t$ as $n\to\infty$ and $\Delta f_n(t_n)\to\Delta f(t)$ as $n\to\infty$.
But since $|\Delta f_n(t_n)|\in\{0\}\cup[\delta,\infty)$, $n\in\NN$, and $|\Delta f(t)|>0$,
 we have that $|\Delta f(t)|=\lim_{n\to\infty}|\Delta f_n(t_n)|\geq\delta$, hence $f\in F_\delta$, as desired.
Since $\PP((m_{n_\ell}^{-1}Y_{\lfloor n_\ell t\rfloor}^{(n_\ell)})_{t\in\RR_+}\in F_\delta)=1$, $\ell\in\NN$,
 and $F_\delta$ is closed, by the Portmanteau theorem (see, e.g., Billingsley \cite[Chapter 2, Theorem 2.1]{Bil})
 and \eqref{Y_subs_conv}, we have that
\[
\PP((\cY_t)_{t\in\RR_+}\in F_\delta)
	\geq\limsup_{\ell\to\infty}\PP((m_{n_\ell}^{-1}Y_{\lfloor n_\ell t\rfloor}^{(n_\ell)})_{t\in\RR_+}\in F_\delta)=1.
\]
Hence $\PP((\cY_t)_{t\in\RR_+}\in F_\delta)=1$, yielding that the absolute value of each jump of $(\cY_t)_{t\in\RR_+}$ is at least $\delta$.
Consequently, since $g(0)=0$ and $g(x)=x$ for all $|x|\geq\delta$, we have that
\[
\sum_{s\leq t}g(\Delta (m_{n_\ell}^{-1}Y_{\lfloor n_\ell s\rfloor}^{(n_\ell)}))
	=\sum_{s\leq t}\Delta (m_{n_\ell}^{-1}Y_{\lfloor n_\ell s\rfloor}^{(n_\ell)})
	=m_{n_\ell}^{-1}Y_{\lfloor n_\ell t\rfloor}^{(n_\ell)}-m_{n_\ell}^{-1}Y_0^{(n_\ell)}, \qquad t\in\RR_+, \quad \ell\in\NN,
\]
 and $g(\Delta\cY_t)=\Delta\cY_t$, $t\in\RR_+$,
 which together with \eqref{Y_jump_conv1} yield
\begin{equation}\label{jump_conv}
\left(\begin{bmatrix}
m_{n_\ell}^{-1}Y_{\lfloor n_\ell t\rfloor}^{(n_\ell)}\\
m_{n_\ell}^{-1}Y_{\lfloor n_\ell t\rfloor}^{(n_\ell)}-m_{n_\ell}^{-1}Y_{0}^{(n_\ell)}
\end{bmatrix}\right)_{t\in\RR_+}
	\distr
\left(\begin{bmatrix}
\cY_t\\
\sum_{s\leq t}\Delta \cY_s
\end{bmatrix}\right)_{t\in\RR_+} \qquad \text{as $\ell\to\infty$.}
\end{equation}
Since we have shown that the absolute values of all jumps of $(\cY_t)_{t\in\RR_+}$ are at least $\delta$,
 the sum $\sum_{s\leq t}\Delta \cY_s$ has finitely many terms for all $t\in\RR_+$ almost surely
 (and at this point of the proof, we do not know whether this sum equals $\cY_t-\cY_0$ or not).
Let $\widetilde\pi_0:\DD(\RR_+,\RR^2)\to\DD(\RR_+,\RR^4)$,
 $(\widetilde\pi_0(f))(t):=[f(t),f(0)]^\top$, $t\in\RR_+$, $f\in\DD(\RR_+,\RR^2)$.
We check that $\widetilde\pi_0$ is continuous.
Let $f\in\DD(\RR_+,\RR^2)$ and $f_n\in\DD(\RR_+,\RR^2)$, $n\in\NN$,
 be such that $f_n\Jto f$ as $n\to\infty$.
By Jacod and Shiryaev \cite[Chapter VI, part \textup{a)} of Theorem 1.14]{JacShi},
we need to check that there is a sequence of strictly increasing functions
$\lambda_n:\RR_+\to\RR_+$, $n\in\NN$,
 such that $\lambda_n(0)=0$, $\lim_{t\to\infty}\lambda_n(t)=\infty$, $n\in\NN$,
 $\sup_{t\in\RR_+}|\lambda_n(t)-t|\to0$ as $n\to\infty$,
and
\begin{equation}\label{pi0_conv}
 \sup_{t\in[0,K]}\|(\widetilde\pi_0(f_n)\circ\lambda_n)(t)-(\widetilde\pi_0(f))(t)\|\to0 \qquad \text{as $n\to\infty$ for all $K\in\RR_{++}$.}
\end{equation}
Let $K\in\RR_{++}$ be fixed, and let $\lambda_n$, $n\in\NN$, be strictly increasing functions
which satisfy our assumption other than \eqref{pi0_conv}, and
\begin{equation}\label{fn_lambda}
 \sup_{t\in[0,K]}\|(f_n\circ\lambda_n)(t)-f(t)\|\to0 \qquad \text{as $n\to\infty$ for all $K\in\RR_{++}$.}
\end{equation}
Since $f_n\Jto f$ as $n\to\infty$, by Jacod and Shiryaev
 \cite[Chapter VI, part \textup{a)} of Theorem 1.14]{JacShi},
such functions $\lambda_n$, $n\in\NN$, exist.
Then, by the triangle inequality, we have that
\[
 \sup_{t\in[0,K]}\|(\widetilde\pi_0(f_n)\circ\lambda_n)(t)-(\widetilde\pi_0(f))(t)\|
	\leq \sup_{t\in[0,K]}\|(f_n\circ\lambda_n)(t)-f(t)\|+\|f_n(0)-f(0)\|, \qquad n\in\NN.
\]
By \eqref{fn_lambda}, the first term on the right hand side of the above inequality tends to $0$ as $n\to\infty$.
The second term converges to $0$ as $n\to\infty$ since $f_n\Jto f$ as $n\to\infty$
 and the natural projection onto $0$ is continuous
  (see, e.g., Billingsley \cite[Chapter 3, part (i) of Theorem 16.6]{Bil}).
Thus we have that $\widetilde\pi_0$ is continuous.
Since the mapping $F:\RR^4\to\RR$,
 $F([x_1,x_2,x_3,x_4]^\top):=x_1-(x_2+x_3)$, $x_1,x_2,x_3,x_4\in\RR$,
 is continuous, by Lemma \ref{cont_cont}, we get that the mapping
 $\widetilde F:\DD(\RR_+,\RR^4)\to\DD(\RR_+,\RR)$,
 $\widetilde F(f)=F\circ f$, $f\in\DD(\RR_+,\RR^4)$,
 is continuous as well.
Thus, by \eqref{jump_conv}, the continuity of $\widetilde F\circ\widetilde\pi_0$, and
the continuous mapping theorem (see, e.g., Billingsley \cite[Chapter 1, Theorem 2.7]{Bil}),
 we have that
\begin{align}
\begin{split}\label{Y_jump_last}
&\left(0\right)_{t\in\RR_+}
=\widetilde F\left(
\left(\begin{bmatrix}
m_{n_\ell}^{-1}Y_{\lfloor n_\ell t\rfloor}^{(n_\ell)}\\[1mm]
m_{n_\ell}^{-1}Y_{\lfloor n_\ell t\rfloor}^{(n_\ell)}-m_{n_\ell}^{-1}Y_{0}^{(n_\ell)}\\
m_{n_\ell}^{-1}Y_{0}^{(n_\ell)}\\
0
\end{bmatrix}\right)_{t\in\RR_+}\right)\\
&=(\widetilde F\circ\widetilde\pi_0)\left(
\left(\begin{bmatrix}
m_{n_\ell}^{-1}Y_{\lfloor n_\ell t\rfloor}^{(n_\ell)}\\[1mm]
m_{n_\ell}^{-1}Y_{\lfloor n_\ell t\rfloor}^{(n_\ell)}-m_{n_\ell}^{-1}Y_{0}^{(n_\ell)}
\end{bmatrix}\right)_{t\in\RR_+}\right)
\distr
(\widetilde F\circ\widetilde\pi_0)\left(
\left(\begin{bmatrix}
\cY_t\\
\sum_{s\leq t}\Delta \cY_s
\end{bmatrix}\right)_{t\in\RR_+}\right)
\end{split}
\end{align}
 as $\ell\to\infty$.
Since $(0)_{t\in\RR_+}\distr(0)_{t\in\RR_+}$ as $\ell\to\infty$, this implies that
\begin{align}
\begin{split}\label{Y_jump_conv_0}
\left(
\cY_t-\sum_{s\leq t}\Delta \cY_s-\cY_0
\right)_{t\in\RR_+}
&=\widetilde F\left(\left(\begin{bmatrix}
\cY_t\\
\sum_{s\leq t}\Delta \cY_s\\
\cY_0\\
0
\end{bmatrix}\right)_{t\in\RR_+}\right)\\
&=(\widetilde F\circ\widetilde\pi_0)\left(\left(\begin{bmatrix}
\cY_t\\
\sum_{s\leq t}\Delta \cY_s
\end{bmatrix}\right)_{t\in\RR_+}\right)
\distre\left(0\right)_{t\in\RR_+}.
\end{split}
\end{align}
Consequently, since the identically zero process has continuous sample paths
 and $\CC(\RR_+,\RR)\in\cB(\DD(\RR_+,\RR))$,
 we have that
\[
\PP\left(\left(
\cY_t-\sum_{s\leq t}\Delta \cY_s-\cY_0
\right)_{t\in\RR_+}\in\CC(\RR_+,\RR)\right)
	=\PP\left((0)_{t\in\RR_+}\in\CC(\RR_+,\RR)\right)
	=1,
\]
that is, $\left(\cY_t-\sum_{s\leq t}\Delta \cY_s-\cY_0\right)_{t\in\RR_+}$
 has continuous sample paths almost surely.
Let $T\in\RR_{++}$ be fixed, and note that by Jacod and Shiryaev \cite[Chapter VI, Proposition 2.4]{JacShi},
 the mapping $\DD(\RR_+,\RR)\ni f\mapsto \sup_{t\in[0,T]}|f|\in\RR$ is continuous
 (and thus also Borel measurable) at each $f\in\DD(\RR_+,\RR)$ such that $\Delta f(T)=0$.
Clearly, all functions in $\CC(\RR_+,\RR)$ satisfy this property.
Therefore, using once again the continuous mapping theorem
 (see, e.g., Billingsley \cite[Chapter 1, Theorem 2.7]{Bil}),
 we have that
\[ 
0=\sup_{t\in[0,T]}|0|\distr\sup_{t\in[0,T]}\left|\cY_t-\sum_{s\leq t}\Delta \cY_s-\cY_0\right| \qquad \text{as $n\to\infty$,}
\]
yielding that 
\[ 
\sup_{t\in[0,T]}\left|\cY_t-\sum_{s\leq t}\Delta \cY_s-\cY_0\right|\distre0,
\]
and this equality holds in an almost sure sense as well.
Now the desired statement follows from
\begin{align*}
&\PP\left(\cY_t=\sum_{s\leq t}\Delta \cY_s+\cY_0,\, t\in\RR_+\right)
	=\PP\left(\sup_{t\in[0,\infty)}\bigg|\cY_t-\sum_{s\leq t}\Delta \cY_s-\cY_0\bigg|=0\right)\\
&\qquad\qquad
	=\PP\left(\bigcap_{N=1}^\infty\left\{\sup_{t\in[0,N]}\bigg|\cY_t-\sum_{s\leq t}\Delta \cY_s-\cY_0\bigg|=0\right\}\right)
	=1,
\end{align*}
 where at the last equality we used that each event in the countable intersaction has probability $1$.
The second statement of Proposition \ref{Pro_Y_jump} follows from the fact that if $(\cY_t)_{t\in\RR_+}$
 has continuous sample paths, then $\sum_{s\leq t}\Delta\cY_s+\cY_0=\cY_0$, $t\in\RR_+$, almost surely.
\proofend

\noindent\textbf{ Second proof of Proposition \ref{Pro_Y_jump} in the case when $(\cY_t)_{t\in\RR_+}$ has continuous sample paths.}
Suppose that $\liminf_{n\to\infty}m_n<\infty$ and hypothesis \textup{\ref{H3}} holds
 such that $(\cY_t)_{t\in\RR_+}$ has continuous sample paths.
Let $M:=\liminf_{n\to\infty}m_n$, and note that
\begin{equation}\label{help_Y_felbontas}
Y_k^{(n)}=Y_0^{(n)}+(Y_k^{(n)}-Y_0^{(n)}), \qquad k\in\ZZ_+, \quad n\in\NN,
\end{equation}
and, as a consequence of \ref{H3} and the continuity of $\pi_0$
(see, e.g., Billingsley \cite[Chapter 3, part (i) of Theorem 16.6]{Bil}),
we have that $m_n^{-1}Y_0^{(n)} \distr \cY_0$ as $n\to\infty$.
Let $(n_\ell)_{\ell\in\NN}$
be a subsequence in $\NN$ such that $m_{n_\ell}<M+2$ for each $\ell\in\NN$.
Then, since $(Y_k^{(n)})_{k\in\ZZ_+}$ is a $\ZZ_+$-valued stochastic process for each $n\in\NN$, we have that
\begin{align}
\begin{split}\label{Y_const_calc}
\PP\left(m_{n_\ell}^{-1}\sup_{t\in[0,T]}|Y_{\lfloor n_\ell t\rfloor}^{(n_\ell)}-Y_0^{(n_\ell)}|>0\right)
	&\leq\PP\left(m_{n_\ell}^{-1}\sup_{t\in[0,T+n_\ell^{-1}]}|Y_{\lfloor n_\ell t\rfloor}^{(n_\ell)}-Y_0^{(n_\ell)}|>0\right)\\
    &= \PP\left(\sup_{t\in[0,T+n_\ell^{-1}]}|Y_{\lfloor n_\ell t\rfloor}^{(n_\ell)}-Y_0^{(n_\ell)}|>0\right) \\
    &= \PP\Big(\Big\{\exists \, j\in\{0,1,\ldots,\lfloor n_\ell T\rfloor\} : |Y_{j+1}^{(n_\ell)}-Y_0^{(n_\ell)}|>0 \Big\} \Big) \\
    &\leq \PP\Big(\Big\{\exists \, j\in\{0,1,\ldots,\lfloor n_\ell T\rfloor\} : |Y_{j+1}^{(n_\ell)}-Y_j^{(n_\ell)}|>0 \Big\} \Big) \\
    &= \PP\Big(\Big\{\exists \; j\in\{0,1,\ldots,\lfloor n_\ell T\rfloor\} : |Y_{j+1}^{(n_\ell)}-Y_j^{(n_\ell)}|\geq 1 \Big \} \Big) \\
    &\leq \PP\Big( \sup_{t\in[0,T]} |Y_{\lfloor n_\ell t\rfloor +1}^{(n_\ell)}-Y_{\lfloor n_\ell t\rfloor}^{(n_\ell)}|\geq 1 \} \Big) \\
	&\leq \PP\left(m_{n_\ell}^{-1}\sup_{t\in[0,T]}|Y_{{\lfloor n_\ell t\rfloor}+1}^{(n_\ell)}-Y_{\lfloor n_\ell t\rfloor}^{(n_\ell)}|>(M+2)^{-1}\right)
\end{split}
\end{align}
for each $\ell\in\NN$ and all $T\in\RR_{++}$.
Since $(\cY_t)_{t\in\RR_+}$ has continuous sample paths,
 \textup{\ref{H3}} implies that $(m_n^{-1}Y^{(n)}_\nt)_{t\in\RR_+}$, $n\in\NN$, is \textit{C}-tight
 (in the sense of Jacod and Shiryaev \cite[Chapter VI, Definition 3.25]{JacShi}),
 and hence, by Jacod and Shiryaev \cite[Chapter VI, Proposition 3.26]{JacShi}, we have that
\begin{equation*}
\lim_{n\to\infty}\PP\left(m_n^{-1}\sup_{t\in[0,T]}|Y_{{\lfloor nt\rfloor}+1}^{(n)}-Y_{\lfloor nt\rfloor}^{(n)}|>(M+2)^{-1}\right)=0 \qquad \text{for all $T\in\RR_{++}$.}
\end{equation*}
Therefore, using \eqref{Y_const_calc}, we get that
 \begin{equation*}
  \lim_{\ell\to\infty}\PP\left(m_{n_\ell}^{-1}\sup_{t\in[0,T]}|Y_{\lfloor n_\ell t\rfloor}^{(n_\ell)}-Y_0^{(n_\ell)}|>0\right)= 0 \qquad \text{for all $T\in\RR_{++}$.}
  \end{equation*}
Consequently, for all $\vare>0$ and $T>0$, we have that
 \begin{equation}\label{Y_const_stoch}
  \PP\left(m_{n_\ell}^{-1}\sup_{t\in[0,T]}|Y_{\lfloor n_\ell t\rfloor}^{(n_\ell)}-Y_0^{(n_\ell)}|\geq \vare\right)
     \leq \PP\left(m_{n_\ell}^{-1}\sup_{t\in[0,T]}|Y_{\lfloor n_\ell t\rfloor}^{(n_\ell)}-Y_0^{(n_\ell)}|>0\right)
      \to 0\quad \text{as $\ell\to\infty$.}
 \end{equation}
Using Jacod and Shiryaev \cite[Chapter VI, Lemma 3.31]{JacShi}, \eqref{help_Y_felbontas}
 and that $(m_n^{-1}Y_0^{(n)})_{t\in\RR_+}\distr(\cY_0)_{t\in\RR_+}$ as $n\to\infty$,
 this implies that $(m_{n_\ell}^{-1}Y_{\lfloor n_\ell t\rfloor}^{(n_\ell)})_{t\in\RR_+}\distr(\cY_0)_{t\in\RR_+}$ as $\ell\to\infty$.
By the Portmanteau theorem (see, e.g., Billingsley \cite[Chapter 1, Theorem 2.1]{Bil}),
the convergence in \textup{\ref{H3}} holds along each subsequence,
therefore we have that $(m_{n_\ell}^{-1}Y_{\lfloor n_\ell t\rfloor}^{(n_\ell)})_{t\in\RR_+}\distr(\cY_t)_{t\in\RR_+}$ as $\ell\to\infty$.
Consequently, we have that $(\cY_t)_{t\in\RR_+}\distre(\cY_0)_{t\in\RR_+}$.

Next, we show that $\PP(\cY_t=\cY_0, t\in\RR_+)=1$.
First, note that $(\cY_t)_{t\in\RR_+}$ and $(\cY_0)_{t\in\RR_+}$ both have continuous sample paths.
By Billingsley \cite[Chapter 3, part (ii) of Theorem 16.6]{Bil},
 $\pi_t-\pi_0$, $t\in\RR_+$, are Borel measurable mappings, yielding that
 $(\pi_t-\pi_0)^{-1}\left(\{0\}\right)$, $t\in\RR_+$, are Borel measurable subsets of $\DD(\RR_+,\RR)$.
Then, since $(\cY_t)_{t\in\RR_+}\distre(\cY_0)_{t\in\RR_+}$, we have
\begin{align}\label{Y_const_help1}
	\begin{split}
	\PP(\cY_t=\cY_0)
		=&\PP(\cY_t-\cY_0=0)
		=\PP\left((\cY_s)_{s\in\RR_+}\in (\pi_t-\pi_0)^{-1}(\{0\})\right)\\
		=&\PP\left((\cY_0)_{s\in\RR_+}\in (\pi_t-\pi_0)^{-1}(\{0\})\right)
		=\PP(\cY_0=\cY_0)
		=1, \qquad t\in\RR_+.
	\end{split}
\end{align}
Since $(\cY_t)_{t\in\RR_+}$ has continuous sample paths, we have that $\PP((\cY_t)_{t\in\RR_+}\in\CC(\RR_+,\RR))=1$, and
 hence we can write
\begin{align*}
\PP\left(\cY_t=\cY_0,t\in\RR_+\right)
	&=\PP\left(\{(\cY_t)_{t\in\RR_+}\in\CC(\RR_+,\RR)\}\cap\{\cY_r=\cY_0,r\in\RR_+\}\right)\\
	&=\PP\left(\{(\cY_t)_{t\in\RR_+}\in\CC(\RR_+,\RR)\}\cap\{\cY_r=\cY_0,r\in\QQ\cap\RR_+\}\right)\\
	&=\PP\left(\cY_r=\cY_0,r\in\QQ\cap\RR_+\right)=1,
\end{align*}
 where at the second equality we used that two functions in $\CC(\RR_+,\RR)$ are equal
 if and only if they coincide on a dense subset of $\RR_+$ (in particular, on $\RR_+\cap\QQ$),
 and at the last equality, \eqref{Y_const_help1} and the fact that
 the intersection of countably many events with probability $1$ is an event with probability $1$ as well.
\proofend

\noindent\textbf{Proof of Proposition \ref{Pro_H2_UI}.}
Let $(\cY_t)_{t\in\RR_+}$ and $(Y_k^{(n)})_{k\in\ZZ_+}$, $n\in\NN$, be given such that \eqref{Y_ui} and \ref{H3} hold.
Note that \eqref{Y_ui} trivially implies that $\EE(Y_k^{(n)})<\infty$, $k\in\ZZ_+$, $n\in\NN$.
Let $\cY_t^{(n)}:=m_n^{-1}Y_\nt^{(n)}$, $t\in\RR_+$, $n\in\NN$.
Then \ref{H3} means that
\begin{equation}\label{H3_rephrase}
(\cY_t^{(n)})_{t\in\RR_+}\distr(\cY_t)_{t\in\RR_+} \qquad \text{as $n\to\infty$,}
\end{equation}
where $(\cY_t)_{t\in\RR_+}$ has continuous sample paths due to our assumption.

First, we show that $\EE(\cY_t)<\infty$, $t\in\RR_+$.
Note that, for all $t\in\RR_+$, $\pi_t$ is continuous on $\CC(\RR_+,\RR)$
(see, e.g., Billingsley \cite[part (i) of Theorem 16.6]{Bil}),
thus \eqref{H3_rephrase} implies that $\cY_t^{(n)}\distr\cY_t$ as $n\to\infty$.
Furthermore, for all $t\in\RR_+$, \eqref{Y_ui} implies that $\cY_t^{(n)}$, $n\in\NN$,
are uniformly integrable. Therefore, by Billingsley \cite[Theorem 3.5]{Bil},
we have that $\EE(\cY_t)<\infty$ and $\EE(\cY_t^{(n)})\to\EE(\cY_t)$ as $n\to\infty$.

Next, we show that the convergence in \eqref{H2_beta} holds with the function $\RR_+\ni t\mapsto\beta(t):=\EE(\cY_t)\in\RR_+$.
Since $\DD(\RR_+,\RR)$ is a separable metric space,
we necessarily have that the support of the distribution of $(\cY_t)_{t\in\RR_+}$ is separable.
Therefore, by the Skorokhod representation theorem (see, e.g., Billingsley \cite[Chapter 1, Theorem 6.7]{Bil}),
there exist stochastic processes $(\cU_t)_{t\in\RR_+}$, $(\cU_t^{(n)})_{t\in\RR_+}$, $n\in\NN$,
defined on an appropriate probability space
(which we assume to be the original one, $(\Omega,\cA,\PP)$ for simplicity)
such that $(\cU_t)_{t\in\RR_+}\distre(\cY_t)_{t\in\RR_+}$
and $(\cU^{(n)}_t)_{t\in\RR_+}\distre(\cY^{(n)}_t)_{t\in\RR_+}$, $n\in\NN$,
and, for each $\omega\in\Omega$,
\begin{equation}\label{U_skor_conv}
(\cU^{(n)}_t(\omega))_{t\in\RR_+}\Jto(\cU_t(\omega))_{t\in\RR_+} \qquad \text{as $n\to\infty$.}
\end{equation}
Since $(\cU_t)_{t\in\RR_+}$ has continuous sample paths almost surely (following from the assumption that $(\cY_t)_{t\in\RR_+}$ possesses this property),
by Jacod and Shiryaev \cite[Chapter VI, part b) of Proposition 1.17]{JacShi},
this implies that
\begin{equation}\label{as_conv}
\sup_{t\in[0,T]}|\cU^{(n)}_t-\cU_t|\as0 \qquad \text{as $n\to\infty$, for all $T\in\RR_{++}$.}
\end{equation}
For the rest of the proof, let $T\in\RR_{++}$ be fixed.
Recall that we have shown that $\EE(\cY_t)<\infty$, $t\in\RR_+$.
Thus, since $\cU_t\distre\cY_t$ and $\cU_t^{(n)}\distre\cY_t^{(n)}$, $n\in\NN$, $t\in\RR_+$, we have that
\begin{equation}\label{YU_diff}
\sup_{t\in[0,T]}|\EE(\cY_t^{(n)})-\EE(\cY_t)|
	=\sup_{t\in[0,T]}|\EE(\cU_t^{(n)})-\EE(\cU_t)|, \qquad n\in\NN.
\end{equation}
By the monotonicity of the expected value, we have that
\begin{equation}\label{ui_bound}
\sup_{t\in[0,T]}\left|\EE(\cU_t^{(n)})-\EE(\cU_t)\right|
	\leq	\EE\left(\sup_{t\in[0,T]}|\cU_t^{(n)}-\cU_t|\right), \qquad n\in\NN.
\end{equation}

We check that $\sup_{t\in[0,T]}|\cU_t^{(n)}-\cU_t|$, $n\in\NN$, are uniformly integrable.
Note that
\begin{equation}\label{U_dominated}
\sup_{t\in[0,T]}|\cU_t^{(n)}-\cU_t|\leq\sup_{t\in[0,T]}\cU_t^{(n)}+\sup_{t\in[0,T]}\cU_t, \qquad n\in\NN,
\end{equation}
where $\sup_{t\in[0,T]}\cU^{(n)}_t$, $n\in\NN$, are uniformly integrable by the hypothesis \eqref{Y_ui}.
Since $(\cU_t)_{t\in\RR_+}$ has almost surely continuous sample paths,
and thus, in particular, the sample paths are continuous at the point $T$ almost surely,
by Jacod and Shiryaev \cite[Chapter VI, Proposition 2.4]{JacShi} and \eqref{U_skor_conv},
we have that $\sup_{t\in[0,T]}\cU^{(n)}_t\as\sup_{t\in[0,T]}\cU_t$ as $n\to\infty$.
Thus, using again the uniform integrability of the random variables
$\sup_{t\in[0,T]}\cU_t^{(n)}$, $n\in\NN$,
by Billingsley \cite[Theorem 3.5]{Bil}, we have that
$\EE(\sup_{t\in[0,T]}\cU_t)<\infty$ and $\EE(\sup_{t\in[0,T]}\cU^{(n)}_t)\to\EE(\sup_{t\in[0,T]}\cU_t)$ as $n\to\infty$.
Therefore, as $\sup_{t\in[0,T]}\cU_t$ is integrable, we get that $\sup_{t\in[0,T]}\cU^{(n)}_t+\sup_{t\in[0,T]}\cU_t$, $n\in\NN$, are uniformly integrable
(see, e.g., Kallenberg \cite[Exercise 3.10]{Kal}).
Thus, by \eqref{U_dominated}, $\sup_{t\in[0,T]}|\cU^{(n)}_t-\cU_t|$, $n\in\NN$, are uniformly integrable, as desired.

Hence, by \eqref{as_conv} and Billingsley \cite[Theorem 3.5]{Bil},
the right hand side of \eqref{ui_bound} converges to $0$ as $n\to\infty$.
Consequently, by \eqref{YU_diff} and \eqref{ui_bound}, we have that
\begin{equation*}
\sup_{t\in[0,T]}|\EE(\cY_t^{(n)})-\EE(\cY_t)|\to0 \qquad \text{as $n\to\infty$,}
\end{equation*}
as desired.

Finally, we prove that the function $\RR_+\ni t\mapsto\beta(t)=\EE(\cY_t)\in\RR_+$, is continuous.
Using that $\cU_t\distre\cY_t$, $t\in[0,T]$, for each $t\in[0,T]$, we have that
\begin{equation}\label{beta_U}
\sup_{\substack{s\in[0,T]\\|t-s|\leq\delta}}|\beta(t)-\beta(s)|
	=\sup_{\substack{s\in[0,T]\\|t-s|\leq\delta}}|\EE(\cU_t)-\EE(\cU_s)|
	\leq\EE\left(\sup_{\substack{s\in[0,T]\\|t-s|\leq\delta}}|\cU_t-\cU_s|\right), \qquad \delta\in\RR_{++}.
\end{equation}
Recall that $(\cU_t)_{t\in\RR_+}$ has continuous sample paths almost surely, therefore,
\begin{equation}\label{U_cont}
\sup_{\substack{s\in[0,T]\\|t-s|\leq\delta}}|\cU_t-\cU_s|\as0 \qquad \text{as $\delta\downarrow0$ for all $t\in[0,T]$.}
\end{equation}
Further, since
\begin{equation*}
\sup_{\substack{s\in[0,T]\\|t-s|\leq\delta}}|\cU_t-\cU_s|\leq2\sup_{s\in[0,T]}\cU_s, \qquad \delta\in\RR_{++},\quad t\in[0,T],
\end{equation*}
where the right hand side is integrable (see the previous part of the proof),
we have that, for each $t\in[0,T]$, the family of random variables
\begin{equation*}
\sup_{\substack{s\in[0,T]\\|t-s|\leq\delta}}|\cU_t-\cU_s|, \qquad \delta\in\RR_{++},
\end{equation*}
is uniformly integrable.
Consequently, using \eqref{U_cont} and Billingsley \cite[Theorem 3.5]{Bil},
we have that 
\begin{equation*}
\EE\left(\sup_{\substack{s\in[0,T]\\|t-s|\leq\delta}}|\cU_t-\cU_s|\right)\to0 \qquad \text{as $\delta\downarrow0$ for all $t\in[0,T]$.}
\end{equation*}
Using \eqref{beta_U}, this yields that $\beta$ is continuous, as desired.
\proofend

\section{Proof of Theorem \ref{main_1}}\label{main_proof_1}
 
First, we prove an auxiliary lemma.
\begin{Lem}\label{sc_lemma}
	Let $d\in\NN$, and let $q\in[0,1)$, $q_n\in\RR_+$, $n\in\NN$, be such that $q_n\to q$ as $n\to\infty$.
	Then, for all $f\in\CC(\RR_+,\RR^d)$ and $f_n\in\DD(\RR_+,\RR^d)$, $n\in\NN$,
	with $f_n\lu f$ as $n\to\infty$, we have that
	\begin{equation*}
	\left(\sum_{j=0}^{\nt-1} q_n^{\nt-1-j}(f_n(j/n)-f_n(0))\right)_{t\in\RR_+}
	\lu
	\left(\frac{1}{1-q}(f(t)-f(0)) \right)_{t\in\RR_+} \qquad \text{as $n\to\infty$.}
	\end{equation*}
\end{Lem}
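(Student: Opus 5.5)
Fix $T>0$ and write $g_n:=f_n-f_n(0)$, $g:=f-f(0)$. Then $g\in\CC(\RR_+,\RR^d)$, $g(0)=\bzero$, and $g_n\lu g$, because $f_n(0)\to f(0)$. For $t\in[0,T]$ I will split the quantity to be controlled, $\sum_{j=0}^{\nt-1}q_n^{\nt-1-j}g_n(j/n)-\frac{1}{1-q}g(t)$, into three terms:
\[
\sum_{j=0}^{\nt-1}q_n^{\nt-1-j}\bigl(g_n(j/n)-g(j/n)\bigr)
+\sum_{j=0}^{\nt-1}q_n^{\nt-1-j}\bigl(g(j/n)-g(t)\bigr)
+g(t)\Bigl(\sum_{i=0}^{\nt-1}q_n^{i}-\frac{1}{1-q}\Bigr).
\]
Since $q<1$ and $q_n\to q$, there is $r\in(q,1)$ with $q_n\leq r$ for all large $n$. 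Every geometric sum $\sum_i q_n^i$ is then bounded by $(1-r)^{-1}$.

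First term: its norm is at most $(1-r)^{-1}\sup_{s\in[0,T]}\|g_n(s)-g(s)\|$, which tends to $0$ by the locally uniform convergence.

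Second term: this is the main step, and I will handle it by splitting the sum at a lag $K$. Write $i=\nt-1-j$. For $i<K$ we have $|t-j/n|\leq (K+1)/n$. For $i\geq K$ the weights satisfy $\sum_{i\geq K}q_n^i\leq r^K/(1-r)$. Let $\omega(\delta)$ denote the modulus of continuity of $g$ on $[0,T]$. Then the second term has norm at most
\[
\frac{1}{1-r}\,\omega\Bigl(\frac{K+1}{n}\Bigr)+\frac{r^K}{1-r}\cdot 2\sup_{s\in[0,T]}\|g(s)\|,
\]
uniformly in $t\in[0,T]$. Given $\vare>0$, I first choose $K$ so that the second piece is below $\vare$. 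Then, using the uniform continuity of $g$ on $[0,T]$, I let $n\to\infty$ to make the first piece small.

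Third term: $\sum_{i=0}^{\nt-1}q_n^i=(1-q_n^{\nt})/(1-q_n)$, which is at most $(1-r)^{-1}$. I split $t$ into two ranges.
\begin{itemize}
\item If $t\geq\delta$, then $q_n^{\nt}\leq r^{\lfloor n\delta\rfloor}\to0$. Combined with $q_n\to q$, the bracket tends to $0$ uniformly on $[\delta,T]$, and $g$ is bounded there.
\item If $t<\delta$, the bracket is bounded by $2(1-r)^{-1}$, while $\|g(t)\|\leq\omega(\delta)$ because $g(0)=\bzero$. Choosing $\delta$ small handles this range.
\end{itemize}
This is exactly where continuity of $f$ at $0$ is used: the sum is empty near $t=0$, while the target $\frac{1}{1-q}g(t)$ is small there.

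Combining the three estimates gives $\sup_{t\in[0,T]}$ of the error tending to $0$ for every $T$, which is the claimed locally uniform convergence. The case $q_n=0$ causes no trouble, interpreting $0^0=1$ as the paper's definition of $\cG_{n,q}$ implicitly does. I expect the lag-splitting bound for the second term, which requires uniformity in $t$, to be the only genuinely delicate point.
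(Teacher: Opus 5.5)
Your proof is correct. After the first step it follows a different route from the paper. The first step is shared: the paper also replaces $f_n$ by $f$, at cost $\frac{2}{1-q_n}\sup_{s\in[0,T]}\|f_n(s)-f(s)\|$.

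From there the paper does not use your lag-splitting or your $\delta$-argument near $t=0$. It uses summation by parts instead. It writes $f(j/n)-f(0)$ as a telescoping sum of increments and exchanges the order of summation, which gives the exact identity
\[
\sum_{j=0}^{\lfloor nt\rfloor-1}q_n^{\lfloor nt\rfloor-1-j}\bigl(f(j/n)-f(0)\bigr)
=\frac{f\bigl(0\vee\frac{\lfloor nt\rfloor-1}{n}\bigr)-f(0)}{1-q_n}
-\frac{1}{1-q_n}\sum_{\ell=1}^{\lfloor nt\rfloor-1}q_n^{\lfloor nt\rfloor-\ell}\bigl(f(\ell/n)-f((\ell-1)/n)\bigr).
\]
The first term converges locally uniformly to $\frac{1}{1-q}(f(t)-f(0))$. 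The paper invokes its time-change and $J_1$ lemmas for this, though uniform continuity alone suffices. The second term is bounded by $\frac{q_n}{(1-q_n)^2}$ times the modulus of continuity of $f$ on $[0,T]$ at the single scale $1/n$.

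The telescoping buys economy. The boundary factor $q_n^{\lfloor nt\rfloor}$ and the smallness of $f(t)-f(0)$ near $t=0$, which you handle separately in your third term, are absorbed automatically. No auxiliary parameters $K$ or $\delta$ are needed.

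Your approach buys two things. It is fully elementary and self-contained. It also uses only that the weights have total mass converging to $\frac{1}{1-q}$ and uniformly small tails, so it would carry over unchanged to other nonnegative kernels. The summation-by-parts estimate, by contrast, is governed by $\sum_{i\geq1} i\,q_n^i=\frac{q_n}{(1-q_n)^2}$, a uniform first moment of the weights. That is harmless for geometric weights but is a genuinely stronger requirement in general.
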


\noindent\textbf{Proof.}
Let $f\in\CC(\RR_+,\RR^d)$, $f_n\in\DD(\RR_+,\RR^d)$, $n\in\NN$,
be fixed such that $f_n\lu f$ as $n\to\infty$,
and let $q\in[0,1)$, $q_n\in\RR_+$, $n\in\NN$, be fixed such that $q_n\to q$ as $n\to\infty$.
By definition, it is enough to show that
\begin{equation*}
\sup_{t\in[0,T]}\left\|\sum_{j=0}^{\nt-1} q_n^{\nt-1-j}(f_n(j/n)-f_n(0))-\frac{1}{1-q}\left(f(t)-f(0)\right)\right\|\to0 \qquad \text{as $n\to\infty$}
\end{equation*}
for all $T\in\RR_{++}$. For the rest of this proof, let $T\in\RR_{++}$ be fixed.
Since $q\in[0,1)$ and $q_n\to q$ as $n\to\infty$,
 we may assume without loss of generality that $q_n<1$, $n\in\NN$.
Note that $q<1$, $q_n<1$, $n\in\NN$, and $q_n\to q\in[0,1)$ as $n\to\infty$
 implies that $(1-q_n)^{-1}\to(1-q)^{-1}\in[1,\infty)$ as $n\to\infty$.
Since $f$ is continuous, it is uniformly continuous on $[0,T]$.
Let $g:=(1-q)^{-1}(f-f(0))$, and, for each $n\in\NN$ and all $t\in\RR_+$, let
\begin{equation*}
g_n(t):=\sum_{j=0}^{\nt-1}q_n^{\nt-1-j}(f_n(j/n)-f_n(0)) \qquad \text{and} \qquad h_n(t):=\sum_{j=0}^{\nt-1}q_n^{\nt-1-j}(f(j/n)-f(0)).
\end{equation*}
We first show that $\sup_{t\in[0,T]}\left\|g_n(t)-h_n(t)\right\|\to0$ as $n\to\infty$.
By the triangle inequality, we have that
\begin{align*}
&\sup_{t\in[0,T]}\left\|g_n(t)-h_n(t)\right\|
	=\sup_{t\in[0,T]}\left\|\sum_{j=0}^{\nt-1} q_n^{\nt-1-j}\biggl(f_n(j/n)-f_n(0)-f(j/n)+f(0)\biggr)\right\|\\
&\qquad\qquad
	\leq\sup_{t\in[0,T]}\sum_{j=0}^{\nt-1} q_n^{\nt-1-j}\biggl(\left\|f_n(j/n)-f(j/n)\right\|+\left\|f_n(0)-f(0)\right\|\biggr)\\
&\qquad\qquad
	\leq\sup_{t\in[0,T]}\sum_{j=0}^{\nt-1} q_n^{\nt-1-j}2\sup_{s\in[0,T]}\|f_n(s)-f(s)\|\\
&\qquad\qquad =2\sup_{t\in[0,T]}\frac{1-q_n^\nt}{1-q_n}\sup_{s\in[0,T]}\|f_n(s)-f(s)\|\\
&\qquad\qquad\leq\frac{2}{1-q_n}\sup_{t\in[0,T]}\|f_n(t)-f(t)\|\to0 \qquad \text{as $n\to\infty$,}
\end{align*}
 where the convergence follows from the fact that $(1-q_n)^{-1}\to(1-q)^{-1}\in[1,\infty)$ and $f_n\lu f$ as $n\to\infty$.
Thus, since
 \begin{align*}
   &\sup_{t\in[0,T]}\left\|g_n(t)-g(t)\right\|
  	\leq\sup_{t\in[0,T]}\left\|g_n(t)-h_n(t)\right\|
	+\sup_{t\in[0,T]}\left\|h_n(t)-g(t)\right\|, \qquad n\in\NN,
 \end{align*}
 it is enough to verify that $\sup_{t\in[0,T]}\|h_n(t)-g(t)\|\to0$ as $n\to\infty$.
We have that
 \begin{align*}
 h_n(t)
	=&\sum_{j=0}^{\nt-1}q_n^{\nt-1-j}\big(f(j/n)-f(0)\big)
	=\sum_{j=0}^{\nt-1} q_n^{\nt-1-j}\left(\sum_{\ell=1}^jf(\ell/n)-f((\ell-1)/n)\right)\\
	=&\sum_{j=0}^{\nt-1}\sum_{\ell=1}^jq_n^{\nt-1-j}\big(f(\ell/n)-f((\ell-1)/n)\big)
	=\sum_{\ell=1}^{\nt-1}\sum_{j=\ell}^{\nt-1} q_n^{\nt-1-j}\big(f(\ell/n)-f((\ell-1)/n)\big)\\
	=&\sum_{\ell=1}^{\nt-1} \frac{1-q_n^{\nt-\ell}}{1-q_n}\big(f(\ell/n)-f((\ell-1)/n)\big)\\
	=&\frac{1}{1-q_n}\sum_{\ell=1}^{\nt-1}\big(f(\ell/n)-f((\ell-1)/n)\big)-\frac{1}{1-q_n}\sum_{\ell=1}^{\nt-1} q_n^{\nt-\ell}\big(f(\ell/n)-f((\ell-1)/n)\big),
\end{align*}
and thus
\begin{align}\label{hn_decomp}
h_n(t)	=\frac{1}{1-q_n}\left(f\left(0\vee\frac{\nt-1}{n}\right)-f(0)\right)-\frac{1}{1-q_n}\sum_{\ell=1}^{\nt-1} q_n^{\nt-\ell}\big(f(\ell/n)-f((\ell-1)/n)\big)
 \end{align}
 for each $n\in\NN$ and all $t\in[0,T]$.
It is easy to see that $\left((0\vee(\nt-1)/n)\right)_{t\in\RR_+}$ is c\`adl\`ag, non-negative, monotone increasing for each $n\in\NN$, and
\[
\left(0\vee\frac{\nt-1}{n}\right)_{t\in\RR_+}\lu(t)_{t\in\RR_+} \qquad \text{as $n\to\infty$,}
\]
since $\sup_{t\in[0,K]}|(0\vee(\nt-1)/n)-t|\leq2/n\to0$ as $n\to\infty$ for all $K\in\RR_{++}$.
Thus, since $f$ is continuous and $(t)_{t\in\RR_+}$ is non-negative,
 continuous, and strictly monotone increasing,
 part \textup{(iii)} of Lemma \ref{timechange_cont} imply that
 $(f(0\vee(\nt-1)/n))_{t\in\RR_+}\Jto f$ as $n\to\infty$.
Hence, since $f$ is continuous, by part \textup{(i)} of Lemma \ref{Jto_basic},
 we have that  $(f(0\vee(\nt-1)/n))_{t\in\RR_+}\lu f$ as $n\to\infty$.
Consequently, since $(1-q_n)^{-1}\to(1-q)^{-1}\in[1,\infty)$ as $n\to\infty$,
 part \textup{(ii)} of Lemma \ref{Jto_basic} yields
 that the first term on the right hand side of \eqref{hn_decomp}
 converges locally uniformly to $g$ as $n\to\infty$.
Using again part \textup{(ii)} of Lemma \ref{Jto_basic},
 it remains to show that the second term on the right hand side of \eqref{hn_decomp}
 converges to the $0$ function locally uniformly as $n\to\infty$. 
We have that
\begin{align*}
&\sup_{t\in[0,T]}\left\|\sum_{\ell=1}^{\nt-1} q_n^{\nt-\ell}\left(f(\ell/n)-f((\ell-1)/n)\right)\right\|
	\leq\sup_{t\in[0,T]}\sum_{\ell=1}^{\nt-1} q_n^{\nt-\ell}\|f(\ell/n)-f((\ell-1)/n)\|\\
&\qquad\qquad
	\leq\sup_{t\in[0,T]}\sum_{\ell=1}^{\nt-1} q_n^{\nt-\ell}\sup_{\left\{s,s'\in[0,T]: |s-s'|\leq n^{-1}\right\}}\|f(s)-f(s')\|\\
&\qquad\qquad
 	=\sup_{t\in[0,T]}\frac{q_n-q_n^{\nt}}{1-q_n}\sup_{\left\{s,s'\in[0,T]: |s-s'|\leq n^{-1}\right\}}\|f(s)-f(s')\|\\
&\qquad\qquad
	\leq\frac{q_n}{1-q_n}\sup_{\left\{s,s'\in[0,T]: |s-s'|\leq n^{-1}\right\}}\|f(s)-f(s')\|
	\to\frac{q}{1-q}\cdot 0=0 \qquad \text{as $n\to\infty$,}
\end{align*}
where the convergence holds due to the fact that $q\in[0,1)$, which implies that $q_n(1-q_n)^{-1}\to q(1-q)^{-1}\in\RR_+$ as $n\to\infty$,
 and the fact that $f$ is uniformly continuous on $[0,T]$.
Therefore, by part \textup{(ii)} of Lemma \ref{Jto_basic},
 we get that $h_n\lu g$ as $n\to\infty$, and consequently,
that $g_n\lu g$ as $n\to\infty$, as desired.
\proofend

Now we move on to the proof of Theorem \ref{main_1}.
Recall the notations $a_n=\EE(\xi_{1,1}^{(n)})$ and $v_n=\var(\xi_{1,1}^{(n)})$, $n\in\NN$ from Section \ref{Prelims}.
Note that since $a\in[0,1)$ and $a_n\to a$ as $n\to\infty$, we may assume without loss of generality that $a_n<1$, $n\in\NN$.
By \eqref{X_alt} and $X_0^{(n)}=0$, $n\in\NN$, we have
\begin{equation}\label{sc_X_decomp}
 X_\nt^{(n)}
	= \sum_{j=0}^{\nt-1}a_n^{\nt-1-j}Y_{j}^{(n)}
	+\sum_{j=1}^{\nt}a_n^{\nt-j}M_j^{(n)},
	 \qquad t\in\RR_+,\quad n\in\NN.
\end{equation}
First, we will show that
\begin{align}\label{sc_Y_conv}
\left(m_n^{-1}
	\begin{bmatrix}
	Y_\nt^{(n)}\\[1mm]
	\sum_{j=0}^{\nt-1}a_n^{\nt-1-j}Y_j^{(n)}
	\end{bmatrix}\right)_{t\in\RR_+}
	\distr
	\left(\begin{bmatrix}
	\cY_t\\
	\frac{1}{1-a}\cY_t
	\end{bmatrix}\right)_{t\in\RR_+} \qquad \text{as $n\to\infty$.}
\end{align}
Note that for all $t\in\RR_+$ and each $n\in\NN$, we have that
\begin{align}\label{sc_Y_decomp}
m_n^{-1}\sum_{j=0}^{\nt-1}a_n^{\nt-1-j}Y_{j}^{(n)}
	= m_n^{-1}\sum_{j=0}^{\nt-1}a_n^{\nt-1-j}\left(Y_{\lfloor nj/n\rfloor}^{(n)}-Y_0^{(n)}\right)
	  + m_n^{-1}\sum_{j=0}^{\nt-1}a_n^{\nt-1-j}Y_0^{(n)},
\end{align}
where
\begin{equation*}
\left|m_n^{-1}\sum_{j=0}^{\nt-1}a_n^{\nt-1-j}Y_0^{(n)}\right|
	=\sum_{j=0}^{\nt-1}a_n^{\nt-1-j}m_n^{-1}Y_0^{(n)}
	\leq\frac{1}{1-a_n}m_n^{-1}Y_0^{(n)}, \qquad t\in\RR_+, \quad n\in\NN.
\end{equation*}
Here, since $a\in[0,1)$ and $a_n\to a$ as $n\to\infty$,
we have that $(1-a_n)^{-1}\to(1-a)^{-1}\in[1,\infty)$ as $n\to\infty$.
Further, we have that $m_n^{-1}Y_0^{(n)}\distr\cY_0$ as $n\to\infty$
 by \ref{H3} and the continuity of $\pi_0$ (see, e.g., Billingsley \cite[part (i) of Theorem 16.6]{Bil}),
where $\cY_0=0$ by assumption, and thus $m_n^{-1}Y_0^{(n)}\stoch0$ as $n\to\infty$, yielding that
\begin{equation*}
\sup_{t\in[0,T]}\left|m_n^{-1}\sum_{j=0}^{\nt-1}a_n^{\nt-1-j}Y_0^{(n)}\right|\stoch0 \qquad \text{as $n\to\infty$ for all $T\in\RR_{++}$.}
\end{equation*}
Consequently, by \eqref{sc_Y_decomp} and Jacod and Shiryaev \cite[Chapter VI, Lemma 3.31]{JacShi},
to show \eqref{sc_Y_conv} it is enough to verify that
\begin{align}\label{sc_Y_conv2}
\left(m_n^{-1}
\begin{bmatrix}
Y_\nt^{(n)}\\[2mm]
\sum_{j=0}^{\nt-1}a_n^{\nt-1-j}\left(Y_{\lfloor nj/n\rfloor}^{(n)}-Y_0^{(n)}\right)
\end{bmatrix}
\right)_{t\in\RR_+}
	\distr
	\left(
	\begin{bmatrix}
	\cY_t\\
	\frac{1}{1-a}\cY_t
	\end{bmatrix}\right)_{t\in\RR_+} \qquad \text{as $n\to\infty$.}
\end{align}
Using the notation \eqref{cG_def}, we have that
\begin{align*}
\left(m_n^{-1}\sum_{j=0}^{\nt-1}a_n^{\nt-1-j}\left(Y^{(n)}_{\lfloor nj/n\rfloor}-Y_0^{(n)}\right)\right)_{t\in\RR_+}
	=\cG_{n,a_n}\left(\left(m_n^{-1}Y_\ns^{(n)}-m_n^{-1}Y_0^{(n)}\right)_{s\in\RR_+}\right)
\end{align*}
for each $n\in\NN$.

Next, we apply part \textup{(i)} of Lemma \ref{Conv2Funct} with $d=q=1$ and the following choices:
\begin{itemize}
\item
	$\Phi(f):=(1-a)^{-1}(f-f(0))$, $f\in\DD(\RR_+,\RR)$,
\item
	$\Phi_n(f):=\cG_{n,a_n}(f-f(0))$, $f\in\DD(\RR_+,\RR)$, $n\in\NN$,
\item
	$C:=\CC(\RR_+,\RR)\in\cB(\DD(\RR_+,\RR))$,
\item 
	$(\cU_t)_{t\in\RR_+}:=(\cY_t)_{t\in\RR_+}$,
\item 
	$(\cU_t^{(n)})_{t\in\RR_+}:=(m_n^{-1}Y_\nt^{(n)})_{t\in\RR_+}$, $n\in\NN$.
\end{itemize}
Since the mapping $\DD(\RR_+,\RR)\ni f\mapsto f-f(0)\in\DD(\RR_+,\RR)$ is Borel measurable,
by Lemmas \ref{cG_measurable} and \ref{cont_cont},
we have that the mappings $\Phi$, $\Phi_n$, $n\in\NN$, are Borel measurable.
By Lemma \ref{sc_lemma}, since $a_n\to a\in[0,1)$ as $n\to\infty$, the mappings $\Phi$, $\Phi_n$, $n\in\NN$,
satisfy the conditions of part \textup{(i)} of Lemma \ref{Conv2Funct}.
The stochastic processes $\cU^{(n)}$, $n\in\NN$, are $\RR_+$-valued and have c\`adl\`ag paths,
 and, by \ref{H3} and the assumption that $(\cY_t)_{t\in\RR_+}$ has continuous sample paths,
 we have that $\cU^{(n)}\distr\cU$ as $n\to\infty$ with $\PP(\cU\in C)=1$.
Thus, we can apply part \textup{(i)} of Lemma \ref{Conv2Funct}, which yields \eqref{sc_Y_conv2}, since $\Phi(\cU)=(1-a)^{-1}(\cY-\cY_0)=(1-a)^{-1}\cY$ due to the assumption that $\cY_0=0$.

Having proved \eqref{sc_Y_conv}, by taking into account \eqref{sc_X_decomp}
 and Jacod and Shiryaev \cite[Chapter VI, Lemma 3.31]{JacShi},
 to finish the proof it is enough to show that for all $T\in\RR_{++}$, we have
\begin{equation}\label{help_04}
 m_n^{-1}\sup_{t\in[0,T]}\left|\sum_{j=1}^{\nt}a_n^{\nt-j}M_{j}^{(n)}\right|\stoch0\qquad \text{as $n\to\infty$.}
\end{equation}
Let $T\in\RR_{++}$ be fixed for the rest of this proof.
We have
\begin{align*}
&\sup_{t\in[0,T]}\left|\sum_{j=1}^{\nt}a_n^{\nt-j}M_{j}^{(n)}\right|
	\leq\sup_{t\in[0,T]}\sum_{j=1}^{\nt}a_n^{\nt-j}\left|M_{j}^{(n)}\right|
	\leq\sup_{t\in[0,T]}\sum_{j=1}^{\nt}a_n^{\nt-j}\sup_{s\in[0,T]}\left|M_{\ns}^{(n)}\right|\\
&\qquad\qquad
	=\sup_{t\in[0,T]}\frac{1-a_n^\nt}{1-a_n}\sup_{s\in[0,T]}\left|M_{\ns}^{(n)}\right|
	\leq\frac{1}{1-a_n}\sup_{t\in[0,T]}\left|M_{\nt}^{(n)}\right|, \qquad n\in\NN.
\end{align*}
Consequently, since $a\in[0,1)$ and $a_n\to a$ as $n\to\infty$
 imply that $(1-a_n)^{-1}\to(1-a)^{-1}\in[1,\infty)$ as $n\to\infty$,
 it is enough to show that
\begin{equation*}
m_n^{-1}\sup_{t\in[0,T]}\left|M_{\nt}^{(n)}\right|\stoch0\qquad \text{as $n\to\infty$.}
\end{equation*}
We show the stronger statement that
\begin{equation}\label{sc_M}
\EE\left(m_n^{-2}\sup_{t\in[0,T]} (M_{\nt}^{(n)})^2 \right)\to0\qquad \text{as $n\to\infty$.}
\end{equation}
By simple estimation, together with $M_0^{(n)}=0$, $n\in\NN$, we have that
\begin{equation*}
\EE\left(m_n^{-2}\sup_{t\in[0,T]} (M_{\nt}^{(n)})^2\right)
	=m_n^{-2}\EE\left(\max_{k\in\{0,\dots,\nT\}} (M_{k}^{(n)})^2\right)
	\leq m_n^{-2}\sum_{j=1}^\nT\EE\left( (M_{j}^{(n)})^2\right), \qquad n\in\NN.
\end{equation*}
Note that, by \ref{H2}, we can assume without loss of generality that
\[
\sup_{t\in[0,T]}\EE(m_n^{-1}Y_\nt^{(n)})<L(T)+1, \qquad n\in\NN.
\]
By Lemma \ref{A_1}, we get that
\begin{align*}
m_n^{-2}\sum_{j=1}^\nT\EE\left(\left(M_{j}^{(n)}\right)^2\right)
	=&m_n^{-2}\sum_{j=1}^\nT v_n\EE\left(X_{j-1}^{(n)}\right)
	\leq\frac{v_n\nT}{m_n^2}\max_{j\in\{0,\dots,\nT\}}\EE(X_j^{(n)}),\qquad n\in\NN.
\end{align*}
Using again Lemma \ref{A_1} and $a_n<1$, $n\in\NN$, we have that
\[
\EE(X_k^{(n)})
	=\sum_{\ell=1}^ka_n^{k-\ell}\EE(Y_{\ell-1}^{(n)})
	\leq\frac{1-a_n^k}{1-a_n}\max_{j\in\{0,\dots,k\}}\EE(Y_j^{(n)})
	\leq\frac{1}{1-a_n}\max_{j\in\{0,\dots,k\}}\EE(Y_j^{(n)}), \qquad k\in\NN.
\]
Consequently, we have that
\begin{align*}
&m_n^{-2}\sum_{j=1}^\nT\EE\left(\left(M_{j}^{(n)}\right)^2\right)
	\leq\frac{v_n\nT}{m_n^2}\max_{j\in\{0,\dots,\nT\}}\EE(X_j^{(n)})
	\leq\frac{v_n\nT}{(1-a_n)m_n^2}\max_{j\in\{0,\dots,\nT\}}\EE(Y_j^{(n)})\\
&\qquad\qquad
	\leq\frac{nv_n}{m_n}\cdot\frac{T}{1-a_n}\cdot\sup_{t\in[0,T]}\EE(m_n^{-1}Y_\nt^{(n)})
	\leq\frac{nv_n}{m_n}\cdot\frac{T}{1-a_n}(L(T)+1), \qquad n\in\NN.
\end{align*}
The right hand side of the above inequality converges to $0$ due to \ref{H4},
 since $L(T)<\infty$, and due to $a<1$, we have $(1-a_n)^{-1}\to(1-a)^{-1}\in[1,\infty)$ as $n\to\infty$.
Therefore, we have that \eqref{sc_M} holds, yielding \eqref{help_04}, as desired.
Thus, since \eqref{sc_Y_conv} and \eqref{help_04} together imply \eqref{main_1_X_conv}, we finished the proof of Theorem \ref{main_1}.
\proofend

\section{Proof of Theorem \ref{main_2}}\label{main_proof_2}

First, we prove an auxiliary lemma.
\begin{Lem}\label{cr_lemma}
Let $d\in\NN$, and let $\lambda\in\RR$, $\lambda_n\in\RR$, $n\in\NN$, be such that $\lambda_n\to \lambda$ as $n\to\infty$.
Set $q_n:=1+\frac{\lambda_n}{n}$, $n\in\NN$.
	Then, for all $f\in\DD(\RR_+,\RR^d)$ and $f_n\in\DD(\RR_+,\RR^d)$, $n\in\NN$,
	with $f_n\Jto f$ as $n\to\infty$, we have that
	\begin{equation*}
	\left(n^{-1}\sum_{j=0}^{\nt-1} q_n^{\nt-1-j}f_n(j/n)\right)_{t\in\RR_+}
	\lu
	\left(\int_0^t\ee^{\lambda(t-s)}f(s)\,\dd s\right)_{t\in\RR_+} \quad \text{as $n\to\infty$.}
	\end{equation*}
\end{Lem}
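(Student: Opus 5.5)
Using Lemma \ref{sum_int} with the function $j\mapsto q_n^{\nt-1-j}f_n(j/n)$ (for fixed $t$), we rewrite
\[
n^{-1}\sum_{j=0}^{\nt-1} q_n^{\nt-1-j}f_n(j/n)=\int_0^{\nt/n} q_n^{\nt-1-\ns}f_n(\ns/n)\,\dd s, \qquad t\in\RR_+,
\]
which holds trivially when $\nt=0$. We then compare this with $\int_0^t\ee^{\lambda(t-s)}f(s)\,\dd s$ uniformly on $[0,T]$ for a fixed $T\in\RR_{++}$. The argument splits into three pieces: the effect of replacing the upper limit $\nt/n$ by $t$, the effect of replacing the kernel $q_n^{\nt-1-\ns}$ by $\ee^{\lambda(t-s)}$, and the effect of replacing $f_n(\ns/n)$ by $f(s)$.

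\emph{Kernel and boundary terms.} Since $\log q_n=\lambda_n/n+O(n^{-2})$ uniformly for large $n$, we get $\sup_{0\le s\le t\le T}|q_n^{\nt-1-\ns}-\ee^{\lambda(t-s)}|\to0$. Indeed, the exponent $(\nt-1-\ns)\log q_n$ differs from $\lambda(t-s)$ by at most $(2/n)\cdot n|\log q_n| + T|n\log q_n-\lambda|\to0$, and $\exp$ is uniformly continuous on bounded sets. A $J_1$-convergent sequence is uniformly bounded on compacts: $\sup_n\sup_{s\in[0,T+1]}\|f_n(s)\|<\infty$, for instance via the time changes $\lambda_n$ of Jacod and Shiryaev, or Lemma \ref{Jto_basic}. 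The kernels are also uniformly bounded, by $\ee^{T\sup_n|\lambda_n|}$ or similar. Hence the boundary piece $\int_{\nt/n}^t$ is $O(1/n)$ uniformly in $t\in[0,T]$, and swapping the kernel costs $o(1)$ uniformly.

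\emph{Main step.} It remains to show that
\[
\sup_{t\in[0,T]}\left\|\int_0^t \ee^{\lambda(t-s)}\big(f_n(\ns/n)-f(s)\big)\,\dd s\right\|\le \ee^{|\lambda|T}\int_0^T\|f_n(\ns/n)-f(s)\|\,\dd s\to0.
\]
This is the main obstacle, since $f$ may have jumps and $f_n\to f$ only in $J_1$, not uniformly. The plan is to use that $J_1$ convergence implies $f_n(s_n)\to f(s)$ whenever $s_n\to s$ and $s$ is a continuity point of $f$, a standard fact; see Jacod and Shiryaev, Chapter VI, Proposition 2.1, or Lemma \ref{timechange_cont}. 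Since $\ns/n\to s$, we get $f_n(\ns/n)\to f(s)$ for all $s$ outside the at most countable (hence Lebesgue-null) set of discontinuities of $f$. Together with the uniform bound from above, dominated convergence gives $\int_0^T\|f_n(\ns/n)-f(s)\|\,\dd s\to0$. Combining the three estimates yields the asserted locally uniform convergence.
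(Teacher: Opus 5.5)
Your proof is correct, but it takes a genuinely different route from the paper.

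\emph{The paper's route.} The paper factors the kernel as $q_n^{\nt}\cdot q_n^{-\ns}q_n^{-1}$, so that the integrand $q_n^{-\ns}q_n^{-1}f_n(\ns/n)$ depends on $s$ alone. It then stays entirely inside the $J_1$ calculus:
\begin{itemize}
\item part \textup{(iii)} of Lemma \ref{timechange_cont} (Whitt's composition theorem) gives $f_n(\nt/n)\Jto f$;
\item part \textup{(iii)} of Lemma \ref{Jto_basic} handles the product with $q_n^{-\nt}q_n^{-1}\lu\ee^{-\lambda t}$;
\item Lemma \ref{int_cont} (continuity of $\cI^{(d)}$, via Ethier--Kurtz), together with a further time change, handles the integral up to $\nt/n$;
\item finally it multiplies by $q_n^{\nt}\lu\ee^{\lambda t}$.
\end{itemize}

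\emph{Your route.} You keep the two-variable kernel $q_n^{\nt-1-\ns}$ and control it by a direct uniform estimate on the triangle $\{0\le s\le t\le T\}$. This reduces everything to the single estimate $\int_0^T\|f_n(\ns/n)-f(s)\|\,\dd s\to0$. You obtain it from three facts: local uniform boundedness of $J_1$-convergent sequences, convergence $f_n(s_n)\to f(s)$ at continuity points of $f$, and dominated convergence.

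\emph{Comparison.} Your dominated-convergence step in effect reproves, in the special form you need, the content of Lemma \ref{int_cont}. You also never need Whitt's composition theorem. So your argument is more elementary and self-contained, and it yields an explicit bound on the error. The paper's argument is more modular: it reuses appendix lemmas that it needs anyway elsewhere, for example to verify the hypotheses of Lemma \ref{Conv2Funct} in the proof of Theorem \ref{main_2}.

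\emph{Two small points.}
\begin{itemize}
\item Local uniform boundedness of a $J_1$-convergent sequence does not follow from Lemma \ref{Jto_basic}. Rely instead on the time-change characterization (Jacod--Shiryaev, Chapter VI, Theorem 1.14), as you also suggest.
\item Do not call those time changes $\lambda_n$, since that symbol is already used for the sequence in the statement.
\end{itemize}
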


\noindent\textbf{Proof.}
Let $\lambda\in\RR$, $\lambda_n\in\RR$, $n\in\NN$,
 be fixed such that $\lambda_n\to \lambda$ as $n\to\infty$,
and let $f\in\DD(\RR_+,\RR^d)$, $f_n\in\DD(\RR_+,\RR^d)$, $n\in\NN$, be fixed such that $f_n\Jto f$ as $n\to\infty$.
Set $q_n:=1+\frac{\lambda_n}{n}$, $n\in\NN$, and note that, since $q_n\to1$ as $n\to\infty$, we may assume without loss of generality that $q_n>0$, $n\in\NN$.

First, we check that
 \begin{align}\label{help_03}
\left(q_n^\nt\right)_{t\in\RR_+}\lu(\ee^{\lambda t})_{t\in\RR_+}
\qquad \text{and} \qquad 
\left(q_n^{-\nt}\right)_{t\in\RR_+}\lu(\ee^{-\lambda t})_{t\in\RR_+}
 \qquad \text{as $n\to\infty$.}
\end{align}
Note that $q_n^n=(1+\frac{\lambda_n}{n})^n\to\ee^\lambda$ as $n\to\infty$,
and thus $n\ln(q_n)\to\lambda$ as $n\to\infty$.
Let us write
\begin{equation*}
q_n^\nt=\ee^{\ln(q_n)\nt}=\ee^{n\ln(q_n)\frac{\nt}{n}}, \qquad t\in\RR_+, \quad n\in\NN.
\end{equation*}
Since $\sup_{t\in[0,T]}\vert t-\nt/n\vert\leq n^{-1}$, $T\in\RR_+$, $n\in\NN$,
 we have $(\nt/n)_{t\in\RR_+}\lu(t)_{t\in\RR_+}$ as $n\to\infty$,
 and thus, by part \textup{(ii)} of Lemma \ref{Jto_basic}, we have that
\begin{equation*}
\left(\ln(q_n)\nt\right)_{t\in\RR_+}
	=\left(n\ln(q_n)\frac{\nt}{n}\right)_{t\in\RR_+}
	\lu\left(\lambda t\right)_{t\in\RR_+} \qquad \text{as $n\to\infty$.}
\end{equation*}
Consequently, since the exponential function is continuous,
 by part \textup{(i)} of Lemma \ref{Jto_basic} and Lemma \ref{cont_cont}, we get that
\begin{align*}
\left(q_n^\nt\right)_{t\in\RR_+}=\left(\ee^{\ln(q_n)\nt}\right)_{t\in\RR_+}\lu\left(\ee^{\lambda t}\right)_{t\in\RR_+}\qquad \text{as $n\to\infty$.}
\end{align*}
In a similar way, one can show that
\begin{align*}
\left(q_n^{-\nt}\right)_{t\in\RR_+}=\left(\ee^{-\ln(q_n)\nt}\right)_{t\in\RR_+}\lu\left(\ee^{-\lambda t}\right)_{t\in\RR_+}\qquad \text{as $n\to\infty$,}
\end{align*}
as desired.

Note that, using Lemma \ref{sum_int}, for all $t\in\RR_+$ and each $n\in\NN$ we may write
\begin{equation}\label{cr_lemma_int}
n^{-1}\sum_{j=0}^{\nt-1} q_n^{\nt-1-j}f_n(j/n)
	=q_n^{\nt}\int_{0}^{\frac{\nt}{n}}q_n^{-\ns}q_n^{-1}f_n(\ns/n)\,\dd s.
\end{equation}
It is easy to see that $(\nt/n)_{t\in\RR_+}$ is c\`adl\`ag, non-negative, monotone increasing for each $n\in\NN$, and
$(\nt/n)_{t\in\RR_+}\lu(t)_{t\in\RR_+}$ as $n\to\infty$,
 where $(t)_{t\in\RR_+}$ is non-negative, continuous, and strictly monotone increasing.
Consequently, by part \textup{(iii)} of Lemma \ref{timechange_cont},
we get that
\begin{equation*}
\left(f_n(\nt/n)\right)_{t\in\RR_+}\Jto f \qquad \text{as $n\to\infty$.}
\end{equation*}
Hence, since $q_n^{-1}\to1$ as $n\to\infty$, by \eqref{help_03} and
 parts \textup{(i)} and \textup{(iii)} of Lemma \ref{Jto_basic}, we have that
\begin{equation*}
\left(q_n^{-\nt}q_n^{-1}f_n(\nt/n)\right)_{t\in\RR_+}\Jto \left(\ee^{-\lambda t}f(t)\right)_{t\in\RR_+} \qquad \text{as $n\to\infty$.}
\end{equation*}
Using again that $(\nt/n)_{t\in\RR_+}\lu(t)_{t\in\RR_+}$ as $n\to\infty$,
 by part \textup{(i)} of Lemma \ref{Jto_basic}, part \textup{(iii)} of Lemma \ref{timechange_cont},
 and Lemma \ref{int_cont}, we have that
\begin{equation*}
\left(\int_0^\frac{\nt}{n}q_n^{-\ns}q_n^{-1}f_n(\ns/n)\,\dd s\right)_{t\in\RR_+}\lu\left(\int_0^t\ee^{-\lambda s}f(s)\,\dd s\right)_{t\in\RR_+} \qquad \text{as $n\to\infty$.}
\end{equation*}
Finally, since $(q_n^{\nt})_{t\in\RR_+}\lu(\ee^{\lambda t})_{t\in\RR_+}$ as $n\to\infty$ (see \eqref{help_03}),
 where the exponential function is continuous, part \textup{(ii)} of Lemma \ref{Jto_basic} and \eqref{cr_lemma_int} yield the assertion.
\proofend

Now we move on to the proof of Theorem \ref{main_2}.
Recall the notations $a_n=\EE(\xi_{1,1}^{(n)})$ and $v_n=\var(\xi_{1,1}^{(n)})$, $n\in\NN$ from Section \ref{Prelims}.
Note that since $a_n\to1$ as $n\to\infty$, we may assume without loss of generality that $a_n>0$ for all $n\in\NN$.
Recall that, by \eqref{X_alt} and $X^{(n)}_0=0$, $n\in\NN$, we have
\begin{equation}\label{cr_X_decomp}
X_\nt^{(n)}=\sum_{j=0}^{\nt-1} a_n^{\nt-1-j}Y_j^{(n)}+\sum_{j=1}^\nt a_n^{\nt-j}M_{j}^{(n)}, \qquad n\in\NN, t\in\RR_+.
\end{equation}
As a first step, we show that
\begin{equation}\label{cr_Y_conv}
\left(\begin{bmatrix}
m_n^{-1}Y_\nt^{(n)}\\
(nm_n)^{-1}\sum_{j=0}^{\nt-1} a_n^{\nt-1-j}Y_j^{(n)}
\end{bmatrix}\right)_{t\in\RR_+}
\distr
\left(\begin{bmatrix}
\cY_t\\
\ee^{\gamma t}\int_0^t\ee^{-\gamma s}\cY_s\,\dd s
\end{bmatrix}\right)_{t\in\RR_+} \qquad \text{as $n\to\infty$.}
\end{equation}
Note that for all $t\in\RR_+$ and each $n\in\NN$, using the notation \eqref{cG_def}, we have that
\begin{equation*}
(nm_n)^{-1}\sum_{j=0}^{\nt-1} a_n^{\nt-1-j}Y_j^{(n)}
	=\left(n^{-1}\cG_{n,a_n}\left(\left(m_n^{-1}Y_\ns^{(n)}\right)_{s\in\RR_+}\right)\right)(t).
\end{equation*}

Next, we apply part \textup{(ii)} of Lemma \ref{Conv2Funct} with $d=q=1$ and the following choices:
\begin{itemize}
\item
	$(\Phi(f))(t):=\ee^{\gamma t}\int_0^t\ee^{-\gamma s}f(s)\,\dd s$, $t\in\RR_+$, $f\in\DD(\RR_+,\RR)$,
\item
	$\Phi_n(f):=n^{-1}\cG_{n,a_n}(f)$, $f\in\DD(\RR_+,\RR)$, $n\in\NN$,
\item
	$C:=\DD(\RR_+,\RR)\in\cB(\DD(\RR_+,\RR))$,
\item 
	$(\cU_t)_{t\in\RR_+}:=(\cY_t)_{t\in\RR_+}$,
\item 
	$(\cU_t^{(n)})_{t\in\RR_+}:=(m_n^{-1}Y_\nt^{(n)})_{t\in\RR_+}$, $n\in\NN$.
\end{itemize}
By Lemma \ref{cG_measurable}, we have that the mappings $\Phi_n$, $n\in\NN$, are Borel measurable.
Now we verify that the mapping $\Phi$ is also Borel measurable.
The mapping $F_1:\DD(\RR_+,\RR)\to\DD(\RR_+,\RR)$, $F_1(f):=(\ee^{-\gamma t}f(t))_{t\in\RR_+}$, $f\in\DD(\RR_+,\RR)$,
 is Borel measurable.
Indeed, using that the finite dimensional sets in $\DD(\RR_+,\RR)$ generate the Borel $\sigma$-algebra on $\DD(\RR_+,\RR)$
 (see, e.g., Jacod and Shiryaev \cite[Chapter VI, Theorem 1.14, part c)]{JacShi}),
  it is enough to verify that for each $t\in\RR_+$, the mapping
 \[
   \DD(\RR_+,\RR)\ni f\mapsto (\pi_t\circ F_1)(f)=\ee^{-\gamma t}f(t) = \ee^{-\gamma t}\pi_t(f)
 \]
  is Borel measurable, which holds since it is 
  the composition of the Borel measurable mappings $\pi_t$
  and multiplication by the constant $\ee^{-\gamma t}$
  (whose Borel measurability follows, e.g., from Lemma \ref{cont_cont}).
Let us also consider the mapping $F_2:\DD(\RR_+,\RR)\to\DD(\RR_+,\RR)$,
 $F_2(f):=(\ee^{\gamma t}f(t))_{t\in\RR_+}$, $f\in\DD(\RR_+,\RR)$,
 which is also Borel measurable by a similar argument.
Then $\Phi=F_2 \circ \cI^{(1)} \circ F_1$, where $\cI^{(1)}$  (defined in Lemma \ref{int_cont})
 is continuous and thus Borel measurable by Lemma \ref{int_cont},
 yielding that $\Phi$ is Borel measurable.
It is easy to see that $\Phi(\DD(\RR_+,\RR))\subset\CC(\RR_+,\RR)$,
 and thus $C=\DD(\RR_+,\RR)=\Phi^{-1}\left(\CC(\RR_+,\RR)\right)$.
By Lemma \ref{cr_lemma} and part \textup{(i)} of Lemma \ref{Jto_basic},
 since $a_n=1+\gamma_n/n$, $n\in\NN$, where $\gamma_n\to\gamma$ as $n\to\infty$,
 the set $C$ and the mappings $\Phi$, $\Phi_n$, $n\in\NN$,
 satisfy the conditions of part \textup{(ii)} of Lemma \ref{Conv2Funct}.
The stochastic processes $\cU^{(n)}$, $n\in\NN$, are $\RR_+$-valued and have c\`adl\`ag paths,
 and, by \ref{H3}, we have $\cU^{(n)}\distr\cU$ as $n\to\infty$ with $\PP(\cU\in C)=1$.
Thus, we can apply part \textup{(ii)} of Lemma \ref{Conv2Funct}, which yields that \eqref{cr_Y_conv} holds.

Having proved \eqref{cr_Y_conv}, by taking into account \eqref{cr_X_decomp}
 and Jacod and Shiryaev \cite[Chapter VI, Lemma 3.31]{JacShi},
 to finish the proof it is now enough
 to show that for all $T\in\RR_{++}$, we have
\begin{gather}\label{cr_proof_M_stoch}
\begin{split}
(nm_n)^{-1}\sup_{t\in[0,T]}\left|\sum_{j=1}^\nt a_n^{\nt-j}M_{j}^{(n)}\right|
	\stoch0 \qquad \text{as $n\to\infty$.}
\end{split}
\end{gather}
Let $T\in\RR_{++}$ be fixed for the rest of this proof.
We show the stronger statement that
\begin{equation}\label{cr_M}
(nm_n)^{-2}\EE\left(\sup_{t\in[0,T]}\bigg|\sum_{j=1}^\nt a_n^{\nt-j}M_{j}^{(n)}\bigg|^2\right)\to0
	 \qquad \text{as $n\to\infty$.}
\end{equation}
Since $a_n>0$, $n\in\NN$,
 by Lemma \ref{lemma_submartingale} and Remark \ref{remark_submartingale},
 we have that
\begin{equation*}
\left((1\vee a_n^{-k})\bigg|\sum_{j=1}^{k} a_n^{k-j}M_{j}^{(n)}\bigg|\right)_{k\in\ZZ_+}
\end{equation*}
is a submartingale with respect to the filtration $(\cF_k^{(n)})_{k\in\ZZ_+}$ for each $n\in\NN$.
Using that, for $q>0$,
\[
\max_{j\in\{0,\dots,k\}}(1\vee q^{-j})^{-1}
	=\max_{j\in\{0,\dots,k\}}(1\wedge q^{j})\leq1, \qquad k\in\NN,
\]
by Doob's maximal inequality, we get that
\begin{align*}
\EE\left(\sup_{t\in[0,T]}\bigg|\sum_{j=1}^{\nt} a_n^{\nt-j}M_{j}^{(n)}\bigg|^2\right)
	=&\EE\left(\max_{k\in\{0,\dots,\nT\}}\bigg|\frac{1\vee a_n^{-k}}{1\vee a_n^{-k}}\sum_{j=1}^{k} a_n^{k-j}M_{j}^{(n)}\bigg|^2\right)\\
	\leq&\EE\left(\max_{k\in\{0,\dots,\nT\}}\bigg((1\vee a_n^{-k})\bigg|\sum_{j=1}^{k} a_n^{k-j}M_{j}^{(n)}\bigg|\bigg)^2\right)\\
	\leq&\,4\EE\left((1\vee a_n^{-2\nT})\bigg(\sum_{j=1}^{\nT} a_n^{\nT-j}M_{j}^{(n)}\bigg)^2\right).
\end{align*}
Using \eqref{M_uncorr} and \eqref{A_M_var} we get that
\begin{align*}
\EE\left(\bigg(\sum_{j=1}^{\nT} a_n^{\nT-j}M_{j}^{(n)}\bigg)^2\right)
	=&\sum_{j=1}^\nT a_n^{2(\nT-j)}\var\left(M_{j}^{(n)}\right)
	=\sum_{j=1}^\nT a_n^{2(\nT-j)}v_n\EE(X_{j-1}^{(n)}).
\end{align*}
Consequently, we have that
\begin{align}\label{help05}
\EE\left(\sup_{t\in[0,T]}\bigg|\sum_{j=1}^{\nt} a_n^{\nt-j}M_{j}^{(n)}\bigg|^2\right)
	&\leq 4v_n\left(1\vee a_n^{-2\nT}\right)\sum_{j=1}^{\nT} a_n^{2(\nT-j)}\EE(X^{(n)}_{j-1}),
	\qquad n\in\NN.
\end{align}
It is easy to see that for all $q\in\RR_+$ and each $k\in\NN$,
 we have $\max_{j\in\{0,\dots,k\}}q^{k-j}\leq 1\vee q^k$,
 and $a_n^{-2\nT}\to\ee^{-2\gamma T}<\infty$ as $n\to\infty$.
Note that, by \ref{H2}, we may assume without loss of generality that
\[
\sup_{t\in[0,T]}\EE(m_n^{-1}Y_\nt^{(n)})<L(T)+1, \qquad n\in\NN.
\]
By Lemma \ref{A_1}, for each $k,n\in\NN$, we have that
\[
\EE(X_k^{(n)})
	=\sum_{\ell=1}^ka_n^{k-\ell}\EE(Y_{\ell-1}^{(n)})
	\leq k \max_{\ell\in\{1,\dots,k\}}(a_n^{k-\ell}\EE(Y_{\ell-1}^{(n)}))
	\leq k \max_{\ell\in\{0,\dots,k\}}a_n^{k-\ell}\max_{\ell\in\{0,\dots,k\}}\EE(Y_\ell^{(n)}),
\]
and consequently, for each $k,n\in\NN$, we get that
\begin{align*}
\max_{j\in\{0,\dots,k\}}\EE(X_j^{(n)})
	&\leq\max_{j\in\{0,\dots,k\}}\left(j\max_{\ell\in\{0,\dots,j\}}a_n^{j-\ell}\max_{\ell\in\{0,\dots,j\}}\EE(Y_\ell^{(n)})\right)\\
	&\leq\max_{j\in\{0,\dots,k\}}\left(j(1\vee a_n^j)\max_{\ell\in\{0,\dots,j\}}\EE(Y_\ell^{(n)})\right)
	\leq k (1\vee a_n^k)\max_{\ell\in\{0,\dots,k\}}\EE(Y_\ell^{(n)}).
\end{align*}
Therefore, using also \eqref{help05} and that $\max_{j\in\{0,\dots,k\}}q^{k-j}\leq 1\vee q^k$, $q\in\RR_+$, $k\in\NN$, we have that
\begin{align*}
&(nm_n)^{-2}\EE\left(\sup_{t\in[0,T]}\bigg|\sum_{j=1}^\nt a_n^{\nt-j}M_{j}^{(n)}\bigg|^2\right)
	\leq \frac{4v_n}{(nm_n)^2}\left(1\vee a_n^{-2\nT}\right)\sum_{j=1}^{\nT} a_n^{2(\nT-j)}\EE(X^{(n)}_{j-1})\\
&\qquad\qquad
	\leq\frac{4v_n}{(nm_n)^2}\left(1\vee a_n^{-2\nT}\right)\nT(1\vee a_n^{2\nT})\max_{j\in\{0\dots,\nT\}}\EE(X^{(n)}_{j})\\
&\qquad\qquad
	\leq\frac{4v_n}{m_n}\frac{\nT^2}{n^2}\left(1\vee a_n^{-2\nT}\right)(1\vee a_n^{2\nT})(1\vee a_n^\nT)\sup_{t\in[0,T]}\EE(m_n^{-1}Y^{(n)}_{\nt})\\
&\qquad\qquad
	\leq\frac{4v_n}{m_n}T^2\left(a_n^{-2\nT}\vee a_n^{3\nT}\right)(L(T)+1)\\
&\qquad\qquad
	\to 0\cdot T^2(\ee^{-2\gamma T}\vee\ee^{3\gamma T})(L(T)+1)=0 \qquad \text{as $n\to\infty$,}
\end{align*}
where the convergence holds since $m_n^{-1}v_n\to0$ as $n\to\infty$ by the assumption \ref{H5},
yielding \eqref{cr_M}.
Finally, since \eqref{cr_Y_conv} and \eqref{cr_M} together imply \eqref{main_2_X_conv},
we have finished the proof of Theorem \ref{main_2}.
\proofend

\section{Proof of Theorem \ref{exp_thm}}\label{main_proof_3}
Recall the notations $a_n=\EE(\xi_{1,1}^{(n)})$ and $v_n=\var(\xi_{1,1}^{(n)})$, $n\in\NN$ from Section \ref{Prelims}.

First, we prove part \textup{(i)}.
Note that, since $\beta$ is continuous, by part \textup{(i)} of Lemma \ref{Jto_basic},
 \eqref{H2_beta} implies \eqref{H2_beta_cont},
 and since $a\in[0,1)$ and $a_n\to a$ as $n\to\infty$,
 we may assume without loss of generality that $a_n<1$, $n\in\NN$.
By Lemma \ref{A_1} and $X_0^{(n)}=0$, $n\in\NN$, we have
\begin{align*}
\EE(X_\nt^{(n)})
     &  =  \sum_{j=0}^{\nt-1}a_n^{\nt-1-j} \EE(Y^{(n)}_j)
          = \sum_{j=0}^{\nt-1}a_n^{\nt-1-j}\left(\EE(Y^{(n)}_j)-\EE(Y^{(n)}_0)+\EE(Y^{(n)}_0)\right) \\
      &  = \sum_{j=0}^{\nt-1} a_n^{\nt-1-j} (\EE(Y^{(n)}_j) - \EE(Y^{(n)}_0)) 
                + \sum_{j=0}^{\nt-1} a_n^{\nt-1-j}\EE(Y^{(n)}_0)  \\
	 & = \sum_{j=0}^{\nt-1}a_n^{\nt-1-j}\left(\EE(Y^{(n)}_{\lfloor n j/n\rfloor})-\EE(Y^{(n)}_0)\right)+\frac{1-a_n^\nt}{1-a_n}\EE(Y^{(n)}_0),
	\qquad t\in\RR_+, \quad n\in\NN.
\end{align*}
By \eqref{H2_beta_cont}
and $\beta(0)=0$, we have that $m_n^{-1}\EE(Y^{(n)}_0)\to\beta(0)=0$ as $n\to\infty$.
Hence, we have
\begin{equation*}
\sup_{t\in\RR_+}\frac{1-a_n^\nt}{1-a_n}m_n^{-1}\EE(Y^{(n)}_0)\leq\frac{1}{1-a_n}m_n^{-1}\EE(Y^{(n)}_0)\to\frac{1}{1-a}\beta(0)=0 \qquad \text{as $n\to\infty$,}
\end{equation*}
since $a_n\in[0,1)$, $n\in\NN$, and $a_n\to a\in[0,1)$ as $n\to\infty$ implies that $(1-a_n)^{-1}\to(1-a)^{-1}\in[1,\infty)$ as $n\to\infty$.
Thus
\[
\left(m_n^{-1}\frac{1-a_n^\nt}{1-a_n}\EE(Y^{(n)}_0)\right)_{t\in\RR_+}\lu(0)_{t\in\RR_+} \qquad \text{as $n\to\infty$,}
\]
and, by part \textup{(ii)} of Lemma \ref{Jto_basic}, it is enough to show that
\begin{equation*}\label{sc_exp_conv}
\left(\sum_{j=0}^{\nt-1}a_n^{\nt-1-j}\left(\EE(m_n^{-1}Y_{\lfloor n j/n\rfloor}^{(n)})-\EE(m_n^{-1}Y_0^{(n)})\right)\right)_{t\in\RR_+}
	\lu\left(\frac{1}{1-a}\beta(t)\right)_{t\in\RR_+} \qquad \text{as $n\to\infty$.}
\end{equation*}
Due to \eqref{H2_beta_cont}, the continuity $\beta$,
 and the assumptions $\beta(0)=0$ and $a_n\to a\in[0,1)$ as $n\to\infty$,
 this convergence follows from Lemma \ref{sc_lemma}, 
 which yields the assertion.

Next, we prove part (ii).
By Lemma \ref{A_1} and $X_0^{(n)}=0$, $n\in\NN$, we have
\begin{equation}\label{exp_thm_proof_help1}
\EE(X_\nt^{(n)})
	=\sum_{j=0}^{\nt-1}a_n^{\nt-1-j}\EE(Y_{j}^{(n)})
	=\sum_{j=0}^{\nt-1}a_n^{\nt-1-j}\EE(Y_{\lfloor n\frac{j}{n}\rfloor}^{(n)}), \qquad t\in\RR_+, \quad n\in\NN.
\end{equation}
By \eqref{H2_beta}, we have $(m_n^{-1}\EE(Y_{\nt}^{(n)}))_{t\in\RR_+}\Jto\beta$ as $n\to\infty$, where $\beta\in\DD(\RR_+,\RR)$,
 and $a_n=1+\frac{\gamma_n}{n}$, $n\in\NN$, where $\gamma_n\to\gamma\in\RR$ as $n\to\infty$.
Therefore, Lemma \ref{cr_lemma} and \eqref{exp_thm_proof_help1} yield
\[
\left((nm_n)^{-1}\EE(X_\nt^{(n)})\right)_{t\in\RR_+}
	\lu
	\left(\int_0^t\ee^{\gamma(t-s)}\beta(s)\,\dd s\right)_{t\in\RR_+} \qquad \text{as $n\to\infty$,}
\]
as desired.
\proofend

\section{Proof of Corollary \ref{main_4}}\label{main_proof_4}
Note that, by the assumptions that $\xi_{1,1,2,1}\ase0$ and there exists $a_{2,1}\in\RR_+$ such that $\xi_{1,1,1,2}\ase a_{2,1}$,
 we have that
\begin{align}\label{2X_form}
   \begin{bmatrix} X_{k,1}\\ X_{k,2}\end{bmatrix}
   = \sum_{j=1}^{X_{k-1,1}}
      \begin{bmatrix} \xi_{k,j,1,1} \\ a_{2,1}\end{bmatrix}
     + \sum_{j=1}^{X_{k-1,2}}
        \begin{bmatrix} 0 \\ \xi_{k,j,2,2}\end{bmatrix}
     + \begin{bmatrix} \vare_{k,1}\\ \vare_{k,2} \end{bmatrix} , \qquad
   k \in \NN.
 \end{align}
Thus, we have that
\[
X_{k,1}=\sum_{j=1}^{X_{k-1,1}}\xi_{k,j,1,1}+\vare_{k,1}, \qquad k\in\NN,
\]
and
\[
X_{k,2}=\sum_{j=1}^{X_{k-1,2}}\xi_{k,j,2,2}+a_{2,1}X_{k-1,1}+\vare_{k,2},\qquad k\in\NN.
\]
This shows that $(X_{k,1})_{k\in\ZZ_+}$ is a GWI process,
 and, by part \textup{(iii)} of Remark \ref{model_remark},
 we have that $(X_{k,2})_{k\in\ZZ_+}$ is a GWII process with immigration process $(Y_k)_{k\in\ZZ_+}$, where
\[
Y_k:=a_{2,1}X_{k,1}+\vare_{k+1,2}, \qquad k\in\ZZ_+.
\]
Since $\EE(\xi_{1,1,1,1})=1$, $(X_{k,1})_{k\in\ZZ_+}$ is a critical GWI process
 whose offspring and immigration distributions have finite second moments,
 and thus, by Wei and Winnicki \cite[Theorem 2.1]{WW}, we have that
\begin{equation*}
(n^{-1}X_{\nt,1})_{t\in\RR_+}\distr\left(\cX_{t,1}\right)_{t\in\RR_+} \qquad \text{as $n\to\infty$,}
\end{equation*}
where $(\cX_{t,1})_{t\in\RR_+}$ is the pathwise unique strong solution of the SDE \eqref{CRI_SDE}.
Note that, by part \textup{(iii)} of Lemma \ref{Jto_basic}, the mapping
 $\DD(\RR_+,\RR)\ni f\mapsto ([f(t),0]^\top)_{t\in\RR_+}\in\DD(\RR_+,\RR^2)$
 is continuous (take $g$ and $g_n$, $n\in\NN$, to be the identically zero functions),
 and hence the continuous mapping theorem (see, e.g., Billingsley \cite[Chapter 1, Theorem 2.7]{Bil}) yields that
\begin{equation}\label{cor_type1_conv}
\left(\begin{bmatrix}
n^{-1}X_{\nt,1}\\
0
\end{bmatrix}\right)_{t\in\RR_+}
\distr\left(
\begin{bmatrix}\cX_{t,1}\\
0\end{bmatrix}\right)_{t\in\RR_+} \qquad \text{as $n\to\infty$,}
\end{equation}
Moreover, the mapping $\RR\ni x\mapsto a_{2,1}x\in\RR$ is continuous, by Lemma \ref{cont_cont}
 and the continuous mapping theorem (see, e.g., Billingsley \cite[Chapter 1, Theorem 2.7]{Bil}),
 we have that
\begin{equation}\label{cor_type1_a_conv}
(n^{-1}a_{2,1}X_{\nt,1})_{t\in\RR_+}\distr\left(a_{2,1}\cX_{t,1}\right)_{t\in\RR_+} \qquad \text{as $n\to\infty$.}
\end{equation}
Furthermore, for all $T\in\RR_{++}$, we have that
\begin{align*}
&\EE\left(\sup_{t\in[0,T]}\left(n^{-1}\vare_{\nt+1,2}\right)^{1+\delta}\right)
	=n^{-1-\delta}\EE\left(\sup_{t\in[0,T]}\vare_{\nt+1,2}^{1+\delta}\right)
	\leq n^{-1-\delta}\EE\left(\sum_{j=1}^{\nT+1}\vare_{j,2}^{1+\delta}\right)\\
&\qquad\qquad\qquad
	\leq n^{-1-\delta}(\nT+1)\EE(\vare_{1,2}^{1+\delta})
	\leq n^{-\delta}(T+1)\EE(\vare_{1,2}^{1+\delta})\to0 \qquad \text{as $n\to\infty$,}
\end{align*}
 where we used that $\delta>0$ and $\EE(\vare_{1,2}^{1+\delta})<\infty$.
Consequently, we have that
\begin{equation}\label{cor_vare_stoch0}
\PP\left(\sup_{t\in[0,T]}n^{-1}\vare_{\nt+1,2}>\theta\right)\to0 \qquad \text{as $n\to\infty$ for all $T,\theta\in\RR_{++}$.}
\end{equation}
Hence, taking into account \eqref{cor_type1_a_conv},
 by Jacod and Shiryaev \cite[Chapter VI, Lemma 3.31]{JacShi},
 we have that
\begin{equation}\label{X2_Y_conv}
\left(n^{-1}Y_{\nt}\right)_{t\in\RR_+}\distr\left(a_{2,1}\cX_t\right)_{t\in\RR_+} \qquad \text{as $n\to\infty$.}
\end{equation}

Next we apply Theorem \ref{main_2} with $m_n:=n$, $n\in\NN$,
 $(X_k^{(n)})_{k\in\ZZ_+}:=(X_{k,2})_{k\in\ZZ_+}$, $n\in\NN$, and
 $(Y_k^{(n)})_{k\in\ZZ_+}:=(Y_k)_{k\in\ZZ_+}$.
By assumption, \ref{H1} is satisfied since $\EE(\xi_{1,1,2,2})=1<\infty$.
Using \eqref{A_exp}, we have that $\EE(Y_k)=a_{2,1}\EE(\vare_{1,1})k+\EE(\vare_{2,2})$, $k\in\ZZ_+$,
 and thus \ref{H2} holds with the function $L(t):=a_{2,1}\EE(\vare_{1,1})t$, $t\in\RR_+$.
The convergence \eqref{X2_Y_conv} shows that
 \ref{H3} holds with the stochastic process $(\cY_t)_{t\in\RR_+}=(a_{2,1}\cX_{t,1})_{t\in\RR_+}$,
 which has c\`adl\`ag (in fact, continuous) sample paths.
Since $\var(\xi_{1,1,1,1})<\infty$ by assumption and $m_n=n$, $n\in\NN$, \ref{H5} obviously holds.
Further, since $\EE(\xi_{1,1,1,1})=1$, the condition on $\EE(\xi_{1,1}^{(n)})$, $n\in\NN$,
 in Theorem \ref{main_2} holds with $\gamma_n:=0$, $n\in\NN$, $\gamma:=0$.
Consequently, Theorem \ref{main_2} yields that
\begin{equation}\label{X2_XY_conv}
\left(
\begin{bmatrix}
n^{-1}Y_\nt\\
n^{-2}X_{\nt,2}
\end{bmatrix}
\right)_{t\in\RR_+}
\distr
\left(
\begin{bmatrix}
a_{2,1}\cX_{t,1}\\
a_{2,1}\int_0^t\cX_{s,1}\,\dd s
\end{bmatrix}
\right)_{t\in\RR_+} \qquad \text{as $n\to\infty$.}
\end{equation}

Now we turn to show that \eqref{2X_conv} holds.
Using \eqref{cor_vare_stoch0}, \eqref{X2_XY_conv},
 and Jacod and Shiryaev \cite[Chapter VI, Lemma 3.31]{JacShi}, we have that
\begin{align}
\begin{split}\label{cor_aX_conv}
\left(
\begin{bmatrix}
n^{-1}a_{2,1}X_{\nt,1}\\
n^{-2}X_{\nt,2}
\end{bmatrix}
\right)_{t\in\RR_+}
&=
\left(
\begin{bmatrix}
n^{-1}Y_\nt-n^{-1}\vare_{\nt+1,2}\\
n^{-2}X_{\nt,2}
\end{bmatrix}
\right)_{t\in\RR_+}\\
&
\distr
\left(
\begin{bmatrix}
a_{2,1}\cX_{t,1}\\
a_{2,1}\int_0^t\cX_{s,1}\,\dd s
\end{bmatrix}
\right)_{t\in\RR_+} \qquad \text{as $n\to\infty$.}
\end{split}
\end{align}
We consider the cases $a_{2,1}=0$ and $a_{2,1}>0$ separately.
First, suppose that $a_{2,1}=0$. Then we have that
\[
\left(
\begin{bmatrix}
n^{-1}X_{\nt,1}\\
n^{-2}X_{\nt,2}
\end{bmatrix}
\right)_{t\in\RR_+}
=
\left(
\begin{bmatrix}
n^{-1}X_{\nt,1}\\
0
\end{bmatrix}
\right)_{t\in\RR_+}
+
\left(
\begin{bmatrix}
n^{-1}a_{2,1}X_{\nt,1}\\
n^{-2}X_\nt
\end{bmatrix}
\right)_{t\in\RR_+}, \qquad t\in\RR_+, \quad n\in\NN.
\]
Here, by \eqref{cor_aX_conv}, the second term on the right side converges
 weakly to the two-dimensional identically zero process as $n\to\infty$.
Consequently, since the two-dimensional identically zero process has continuous sample paths,
 and the mapping $\DD(\RR_+,\RR^2)\ni f\mapsto \sup_{t\in[0,T]}\|f(t)\|\in\RR$
 is continuous on $\CC(\RR_+,\RR^2)$ for all $T\in\RR_{++}$
 by Jacod and Shiryaev \cite[Chapter VI, Proposition 2.4]{JacShi},
 using the continuous mapping theorem (see, e.g., Billingsley \cite[Chapter 1, Theorem 2.7]{Bil}),
 we have that
\[
\sup_{t\in[0,T]}\left\|\begin{bmatrix}
n^{-1}a_{2,1}X_{\nt,1}\\
n^{-2}X_\nt
\end{bmatrix}\right\|\distr0 \qquad \text{as $n\to\infty$ for all $T\in\RR_{++}$.}
\]
Since convergence in distribution to a constant random variable
 yields convergence in probability, we have
\[
\PP\left(\sup_{t\in[0,T]}\left\|\begin{bmatrix}
n^{-1}a_{2,1}X_{\nt,1}\\
n^{-2}X_\nt
\end{bmatrix}\right\|>\theta\right)\to0 \qquad \text{as $n\to\infty$ for all $T,\theta\in\RR_{++}$.}
\]
Thus, by \eqref{cor_type1_conv} and Jacod and Shiryaev \cite[Chapter VI, Lemma 3.31]{JacShi},
 we have that
\[
\left(
\begin{bmatrix}
n^{-1}X_{\nt,1}\\
n^{-2}X_{\nt,2}
\end{bmatrix}
\right)_{t\in\RR_+}
\distr
\left(
\begin{bmatrix}
\cX_{t,1}\\
0
\end{bmatrix}
\right)_{t\in\RR_+}
=
\left(
\begin{bmatrix}
\cX_{t,1}\\
a_{2,1}\int_0^t\cX_{s,1}\,\dd s
\end{bmatrix}
\right)_{t\in\RR_+} \qquad \text{as $n\to\infty$,}
\]
as desired.
Finally, suppose that $a_{2,1}>0$.
Then, by the continuity of the mapping $\RR^2\ni[x,y]^\top\mapsto[a_{2,1}^{-1}x,y]^\top\in\RR^2$,
 Lemma \ref{cont_cont}, the continuous mapping theorem
 (see, e.g., Billingsley \cite[Chapter 1, Theorem 2.7]{Bil}),
 and \eqref{cor_aX_conv},
 we have that
\[
\left(
\begin{bmatrix}
n^{-1}X_{\nt,1}\\
n^{-2}X_{\nt,2}
\end{bmatrix}
\right)_{t\in\RR_+}
=
\left(
\begin{bmatrix}
a_{2,1}^{-1}(n^{-1}a_{2,1}X_{\nt,1})\\
n^{-2}X_{\nt,2}
\end{bmatrix}
\right)_{t\in\RR_+}
\distr
\left(
\begin{bmatrix}
\cX_{t,1}\\
a_{2,1}\int_0^t\cX_{s,1}\,\dd s
\end{bmatrix}
\right)_{t\in\RR_+}
\qquad \text{as $n\to\infty$,}
\]
as desired.
\proofend
\vspace*{5mm}

\appendix

\vspace*{5mm}

\noindent{\bf\Large Appendices}

\section{Moments}\label{GWII_moments}

In this appendix, we derive some results regarding the moments of some of the random variables in this paper.

\begin{Lem}\label{A_1}
Let $(X_k)_{k\in\ZZ_+}$ be a GWII process defined in \eqref{X_def}.
Assume that $\EE(\xi_{1,1}^2)<\infty$
and $\EE(Y_k)<\infty$, $k\in\ZZ_+$.
Then, for each $k\in\NN$, we have
\begin{align}
\label{A_cond_exp}
&\EE(X_k\mid\cF_{k-1})
	=X_k-M_k
	=\EE(\xi_{1,1})X_{k-1}+Y_{k-1},\\
\label{A_exp}
&\EE(X_k)
	=\EE(\xi_{1,1})\EE(X_{k-1})+\EE(Y_{k-1})
	=\sum_{\ell=1}^k(\EE(\xi_{1,1}))^{k-\ell}\EE(Y_{\ell-1})
	=\sum_{\ell=0}^{k-1}(\EE(\xi_{1,1}))^{k-1-\ell}\EE(Y_{\ell}),\\
\label{A_X_var}
&\var(X_k\mid\cF_{k-1})
	=\var(M_k\mid\cF_{k-1})
	=\EE(M_k^2\mid\cF_{k-1})
	=\var(\xi_{1,1})X_{k-1},\\
\label{A_M_var}
&\var(M_k)
	=\EE(M_k^2)
	=\var(\xi_{1,1})\EE(X_{k-1}).
\end{align}
\end{Lem}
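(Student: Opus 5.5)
The plan is to condition on $\cF_{k-1}$ and use the independence structure built into \eqref{X_def}. By definition, $\cF_{k-1}=\sigma(Y_0,X_0,\dots,Y_{k-1},X_{k-1})$. Each $X_j$ with $j\leq k-1$ is a measurable function of $\{Y_i: i\leq j-1\}$ and $\{\xi_{i,\ell}: i\leq j,\ \ell\in\NN\}$. Since $\{\xi_{k,j}:j\in\NN\}$ are i.i.d.\ and independent of the $Y$'s, the family $\{\xi_{k,j}: j\in\NN\}$ is independent of $\cF_{k-1}$, while $X_{k-1}$ and $Y_{k-1}$ are $\cF_{k-1}$-measurable. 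I would first record the finiteness of $\EE(X_k)$ by induction on $k$ (this is what makes the conditional expectations well defined), and then prove \eqref{A_cond_exp}--\eqref{A_M_var} in order.

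For \eqref{A_cond_exp}, write $\sum_{j=1}^{X_{k-1}}\xi_{k,j}=\sum_{m=0}^\infty \bone_{\{X_{k-1}=m\}}\sum_{j=1}^m\xi_{k,j}$. By the freezing lemma, its conditional expectation given $\cF_{k-1}$ is $\EE(\xi_{1,1})X_{k-1}$, because the indicator is $\cF_{k-1}$-measurable and the inner sum is independent of $\cF_{k-1}$. Adding $Y_{k-1}$, and noting that $X_k-M_k=\EE(X_k\mid\cF_{k-1})$ by \eqref{M_def}, gives \eqref{A_cond_exp}. Taking expectations then gives the recursion in \eqref{A_exp}, which is integrable by induction starting from $\EE(X_0)=0$. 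Iterating it from $X_0=0$ yields the two sum formulas, which differ only by the index shift $\ell\mapsto\ell+1$.

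For \eqref{A_X_var}, note that $M_k=\sum_{j=1}^{X_{k-1}}(\xi_{k,j}-\EE(\xi_{1,1}))$. Since $X_k-M_k$ is $\cF_{k-1}$-measurable, we have $\var(X_k\mid\cF_{k-1})=\var(M_k\mid\cF_{k-1})$, and this equals $\EE(M_k^2\mid\cF_{k-1})$ because $\EE(M_k\mid\cF_{k-1})=0$. The same decomposition over $\{X_{k-1}=m\}$ then works. For a fixed $m$, $\EE\big((\sum_{j=1}^m(\xi_{k,j}-\EE\xi_{1,1}))^2\big)=m\var(\xi_{1,1})$ by independence, and the freezing lemma turns this into $\var(\xi_{1,1})X_{k-1}$. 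This step uses $\EE(\xi_{1,1}^2)<\infty$. The conditional version of the computation is legitimate for nonnegative integrands even before integrability is known.

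Finally, \eqref{A_M_var} follows by taking expectations in \eqref{A_X_var}: $\EE(M_k^2)=\var(\xi_{1,1})\EE(X_{k-1})<\infty$. Since $\EE(M_k)=0$, this quantity is also $\var(M_k)$. The only delicate point is justifying the conditional computations rigorously, that is, the measurability of $X_{k-1}$ with respect to $\cF_{k-1}$ and the independence of $\{\xi_{k,j}\}_j$ from $\cF_{k-1}$. Everything else is routine.
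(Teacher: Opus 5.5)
Your proposal is correct and follows essentially the same route as the paper. Both arguments compute $\EE(X_k\mid\cF_{k-1})$ from the independence of the $k$-th generation offspring from $\cF_{k-1}$, then take expectations and iterate from $X_0=0$, and finally obtain $\EE(M_k^2\mid\cF_{k-1})=\var(\xi_{1,1})X_{k-1}$ by independence. The only difference is that the paper expands the square into cross and diagonal terms, whereas you apply the variance-of-an-independent-sum identity on each event $\{X_{k-1}=m\}$; you also spell out the freezing-lemma and integrability justifications that the paper leaves implicit.
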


\noindent\textbf{Proof.}
The first equality in \eqref{A_cond_exp} follows from the definition \eqref{M_def} of $M_k$,
and the second equality can be directly checked as follows:
\begin{align*}
\EE(X_k\mid\cF_{k-1})
	=\EE\left(  \sum_{j=1}^{X_{k-1}}\xi_{k,j} + Y_{k-1}  \,\bigg|\,\cF_{k-1}\right)
	=X_{k-1}\EE(\xi_{1,1})+Y_{k-1}, \qquad k\in\NN.
\end{align*}
Taking the expected value of the left and right hand sides of \eqref{A_cond_exp} yields the first equality in \eqref{A_exp}:
\begin{align*}
\EE(X_k)
	=&\EE(\EE(X_k\mid\cF_{k-1}))
	=\EE(\xi_{1,1})\EE(X_{k-1})+\EE(Y_{k-1}), \qquad k\in\NN.
\end{align*}
We check the second equality in \eqref{A_exp} by induction, using the first equality in \eqref{A_exp} and the fact that $X_{0}=0$.
Note that we have
\begin{equation}\label{A_exp_ind_base}
\EE(X_1)=\EE(\xi_{1,1})\EE(X_0)+\EE(Y_0)=\EE(Y_0)=\sum_{\ell=1}^1\left(\EE(\xi_{1,1})\right)^{1-\ell}\EE(Y_{\ell-1}),
\end{equation}
which shows that the second equality in \eqref{A_exp} holds for $k=1$.
Now suppose that $\EE(X_k)=\sum_{\ell=1}^k\left(\EE(\xi_{1,1})\right)^{k-\ell}\EE(Y_{\ell-1})$ for some particular $k\in\NN$.
Then we have that
\begin{align*}
\EE(X_{k+1})
	=&\EE(\xi_{1,1})\EE(X_k)+\EE(Y_k)
	=\EE(\xi_{1,1})\sum_{\ell=1}^k\left(\EE(\xi_{1,1})\right)^{k-\ell}\EE(Y_{\ell-1})+\EE(Y_k)\\
	=&\sum_{\ell=1}^{k+1}\left(\EE(\xi_{1,1})\right)^{k+1-\ell}\EE(Y_{\ell-1}),
\end{align*}
as desired.
The third equality in \eqref{A_exp} follows by a shift of indeces.
Note that, by \eqref{A_cond_exp}, we have that
\begin{align*}
&\var(X_k\mid\cF_{k-1})
	=\EE\left(\left(X_k-\EE(X_k\mid\cF_{k-1})\right)^2\bigg|\cF_{k-1}\right)
	=\EE(M_k^2\mid\cF_{k-1})\\
&\qquad
	=\EE(\left(X_k-\EE(\xi_{1,1})X_{k-1}-Y_{k-1}\right)^2\mid\cF_{k-1})
	=\EE\left(\left(\sum_{j=1}^{X_{k-1}}(\xi_{k,j}-\EE(\xi_{1,1}))\right)^2\,\bigg|\,\cF_{k-1}\right)\\
&\qquad	=\EE\left(\sum_{\substack{i,j=1\\i\neq j}}^{X_{k-1}}(\xi_{k,i}-\EE(\xi_{1,1}))(\xi_{k,j}-\EE(\xi_{1,1}))+\sum_{j=1}^{X_{k-1}}(\xi_{k,j}-\EE(\xi_{1,1}))^2\,\bigg|\,\cF_{k-1}\right),
	\qquad k\in\NN.
\end{align*}
Using the independence of $\xi_{k,j}$ and $\xi_{k,i}$
(and thus $\EE((\xi_{k,i}-\EE(\xi_{1,1}))(\xi_{k,j}-\EE(\xi_{1,1})))=0$) for $i\neq j$, $i,j,k\in\NN$,
we get that
\begin{equation*}
\var(X_k\mid\cF_{k-1})
	=\EE\left(\sum_{j=1}^{X_{k-1}}(\xi_{k,j}-\EE(\xi_{1,1}))^2\,\bigg|\,\cF_{k-1}\right)
	=\var(\xi_{1,1})X_{k-1}, \qquad k\in\NN,
\end{equation*}
yielding \eqref{A_X_var}. Taking expectations of both sides of \eqref{A_X_var} implies \eqref{A_M_var}.
\proofend

\section{Mapping theorems}
\label{CMT}

In this appendix, we recall some results about locally uniform convergence,
 convergence in the Skorokhod $J_1$ topology,
 and establish the Borel measurability of some mappings.

Recall that for $d\in\NN$, $\DD(\RR_+,\RR^d)$ and $\CC(\RR_+,\RR^d)$
 denote the set of $\RR^d$-valued c\`adl\`ag and continuous functions, respectively,
 and, for all $t\in\RR_+$, $\pi_t$ is the natural projection onto $t$.
Let $\cB(\DD(\RR_+, \RR^d))$ denote the Borel $\sigma$-algebra on
 $\DD(\RR_+, \RR^d)$ for the metric defined in Jacod and Shiryaev
 \cite[Chapter VI, (1.26)]{JacShi} (see also Billingsley \cite[Chapter 3, Section 16]{Bil}).
With this metric $\DD(\RR_+, \RR^d)$ is a
 complete and separable metric space and the topology induced by this metric is
 the so-called Skorokhod $J_1$ topology.
Note that $\CC(\RR_+,\RR^d)\in\cB(\DD(\RR_+,\RR^d))$, see, e.g., Ethier and Kurtz \cite[Problem 3.11.25]{EthKur}.

In the following lemma, we collect some simple facts about convergence in the Skorokhod $J_1$ topology
 and locally uniform convergence
 (see, e.g., Jacod and Shiryaev \cite[Chapter VI, Propositions 1.17, 1.23, parts \textup{(a)} and \textup{(b.5)} of 2.1, and part \textup{(ii)} of 2.2]{JacShi},
 Remmert \cite[Chapter 3.1.5, Composition Theorem, page 99]{Rem},
 Apostol \cite[Chapter 9, Exercise 9.4]{Apo}, Thomson et al. \cite[Exercise 9.3.19]{ThomBruck}
 and Whitt \cite[Theorems 4.1 and 4.2]{Whitt}).

\begin{Lem}\label{Jto_basic}
Let $d,q\in\NN$, and let $f\in\DD(\RR_+,\RR^d)$, $f_n\in\DD(\RR_+,\RR^d)$, $n\in\NN$,
 and $g\in\DD(\RR_+,\RR^q)$, $g_n\in\DD(\RR_+,\RR^q)$, $n\in\NN$.
\begin{enumerate}[label=(\roman*)]
\item
	If $f$ is continuous, then $f_n\Jto f$ as $n\to\infty$ if and only if $f_n\lu f$ as $n\to\infty$.
\item
	Suppose that $f_n\lu f$ and $g_n\lu g$ as $n\to\infty$.
	Then $[f_n, g_n]^\top\lu[f,g]$ as $n\to\infty$.
	Further, if $q=d$, then $f_n+g_n\lu f+g$ as $n\to\infty$,
	 and if $q=1$ and $g$ is continuous, then $f_n\cdot g_n\lu f\cdot g$ as $n\to\infty$.
\item
	Suppose that $f_n\Jtoo{d} f$ and $g_n\Jtoo{q} g$ as $n\to\infty$, and $g\in\CC(\RR_+,\RR^q)$.
	Then $[f_n, g_n]^\top\Jtoo{d+q}[f,g]$ as $n\to\infty$.
	Further, if $q=d$, then $f_n+g_n\Jtoo{d}f+g$ as $n\to\infty$,
	 and if $q=1$, then $f_n\cdot g_n\Jtoo{d}f\cdot g$ as $n\to\infty$.
\end{enumerate}
\end{Lem}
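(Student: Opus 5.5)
The statement is Lemma \ref{Jto_basic}, a collection of standard facts, so I will verify each part directly from the definitions, leaning on the characterisation of $J_1$ convergence via time changes (Jacod and Shiryaev, Chapter VI, Theorem 1.14).

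For part (i), one direction is immediate. If $f_n\lu f$, take $\lambda_n(t)=t$ in the time-change characterisation; this gives $f_n\Jto f$, and continuity of $f$ is not needed here.

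For the converse, suppose $f$ is continuous and $f_n\Jto f$. Choose strictly increasing bijections $\lambda_n$ of $\RR_+$ with $\sup_t|\lambda_n(t)-t|\to0$ and $\sup_{t\le K}\|f_n(\lambda_n(t))-f(t)\|\to0$ for every $K$. Fix $T>0$ and write $f_n(s)=f_n(\lambda_n(\lambda_n^{-1}(s)))$. Then
\[
\sup_{s\le T}\|f_n(s)-f(s)\|\le \sup_{u\le T+1}\|f_n(\lambda_n(u))-f(u)\|+\sup_{s\le T}\|f(\lambda_n^{-1}(s))-f(s)\|
\]
for large $n$. The first term tends to $0$ by the choice of $\lambda_n$. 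The second tends to $0$ because $f$ is uniformly continuous on $[0,T+1]$ and $\sup_s|\lambda_n^{-1}(s)-s|=\sup_t|t-\lambda_n(t)|\to0$.

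For part (ii), each claim follows from the triangle inequality on $[0,T]$.

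The vector and sum statements are immediate.

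For the product, write $f_ng_n-fg=(f_n-f)g_n+f(g_n-g)$ and bound the sup over $[0,T]$. I use that $g_n$ is uniformly bounded on $[0,T]$ for large $n$, since $g$ is bounded there. I also use that $f$ is bounded on $[0,T]$, which holds for any càdlàg function. So continuity of $g$ is not even essential here.

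For part (iii), the key step, and the only real obstacle, is joint convergence. Separate $J_1$ convergence does not generally imply joint $J_1$ convergence, because the time changes may differ.

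Take the $\lambda_n$ belonging to $f_n\Jto f$ and use the same time change for $g_n$. By part (i), $g_n\lu g$ since $g$ is continuous. Hence
\[
\sup_{t\le K}\|g_n(\lambda_n(t))-g(t)\|\le\sup_{s\le K+1}\|g_n(s)-g(s)\|+\sup_{t\le K}\|g(\lambda_n(t))-g(t)\|\to0,
\]
using uniform continuity of $g$ on compacts and $\lambda_n(t)\le K+1$ for large $n$.

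So the common time change $\lambda_n$ works for the pair $[f_n,g_n]^\top$, giving $[f_n,g_n]^\top\Jtoo{d+q}[f,g]$. The sum and product statements then follow by applying the same $\lambda_n$ to $f_n\circ\lambda_n+g_n\circ\lambda_n$ and $(f_n\circ\lambda_n)(g_n\circ\lambda_n)$. The needed locally uniform convergence is exactly the computation from part (ii), applied to the time-changed functions.
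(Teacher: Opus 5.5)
Your argument is correct. It differs from the paper mainly in that the paper gives no argument at all: it justifies Lemma \ref{Jto_basic} purely by citation, to several propositions in Chapter VI of Jacod and Shiryaev, to Theorems 4.1 and 4.2 of Whitt, and to textbook exercises on uniform convergence of products and compositions.

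You instead derive everything from the single time-change characterization of $J_1$ convergence (Jacod and Shiryaev, Chapter VI, Theorem 1.14):
\begin{itemize}
\item for (i), composing with the inverse time change $\lambda_n^{-1}$ and using uniform continuity of $f$ on compacts;
\item for (ii), the splitting $f_ng_n-fg=(f_n-f)g_n+f(g_n-g)$;
\item for (iii), the observation that, because $g$ is continuous, the time changes belonging to $f_n\Jto f$ also work for $g_n$. This is exactly what turns separate convergence into joint convergence.
\end{itemize}

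Your route gives self-containment and makes visible where continuity is actually used. In particular, your remark that the continuity of $g$ in the product claim of (ii) is superfluous is correct, since c\`adl\`ag functions are bounded on compacts. The citation route gives brevity and access to more general statements. For instance, Whitt's theorems give $J_1$ continuity of addition and multiplication at pairs of limits that merely have no common discontinuities, not only when one of them is continuous.
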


For $\RR^d$-valued stochastic processes $(\bcY_t)_{t \in \RR_+}$ and
 $(\bcY^{(n)}_t)_{t \in \RR_+}$, $n \in \NN$, with c\`adl\`ag paths, we write
 $\bcY^{(n)} \distr \bcY$ as $n\to\infty$ if the distribution of $\bcY^{(n)}$ on the
 space $(\DD(\RR_+,  \RR^d), \cB(\DD(\RR_+, \RR^d)))$ converges weakly to the
 distribution of $\bcY$ on the space
 $(\DD(\RR_+,  \RR^d), \cB(\DD(\RR_+, \RR^d)))$ as $n \to \infty$.
If $\xi$ and $\xi_n$, $n \in \NN$, are random elements with values in a metric space $(E,d)$,
 then we denote by $\xi_n \distr \xi$ as $n\to\infty$ the weak convergence of the
 distribution of $\xi_n$ on the space $(E, \cB(E))$ towards the
 distribution of $\xi$ on the space $(E, \cB(E))$ as $n \to \infty$,
 where $\cB(E)$ denotes the Borel $\sigma$-algebra on $E$ induced by
 the given metric $d$.

One can formulate the following versions of
 the continuous mapping theorem, which easily follow from Kallenberg \cite[Theorem 3.27]{Kal}.
Given $d,q\in\NN$, for Borel measurable mappings $\Phi : \DD(\RR_+, \RR^d) \to \DD(\RR_+, \RR^q)$
 and $\Phi_n : \DD(\RR_+, \RR^d) \to \DD(\RR_+, \RR^q)$, $n \in \NN$,
 we will denote by $C_{\Phi, (\Phi_n)_{n \in \NN}}$ the set of all functions
 $f \in \CC(\RR_+, \RR^d)$ for which $\Phi_n(f_n) \lu \Phi(f)$ as $n\to\infty$ whenever
 $f_n \lu f$ as $n\to\infty$ with $f_n \in \DD(\RR_+, \RR^d)$, $n \in \NN$.

\begin{Lem}\label{Conv2Funct}
Let $d,\,q\in\NN$.
Let $\Phi : \DD(\RR_+, \RR^d) \to \DD(\RR_+, \RR^q)$ and
  $\Phi_n : \DD(\RR_+, \RR^d) \to \DD(\RR_+, \RR^q)$, $n \in \NN$, be Borel measurable mappings.
Let $(\bcU_t)_{t \in \RR_+}$ and $(\bcU^{(n)}_t)_{t \in \RR_+}$, $n \in \NN$,
 be $\RR^d$-valued stochastic processes with c\`adl\`ag paths such that
 $\bcU^{(n)} \distr \bcU$ as $n\to\infty$.

\begin{enumerate}[label=(\roman*)]
\item
	Suppose that there exists $C\in\cB(\DD(\RR_+,\RR^d))$
	 such that  $\PP(\bcU \in C)=1$ and $C\subset C_{\Phi, (\Phi_n)_{n \in \NN}}$.
	 Then $[\bcU^{(n)},\Phi_n(\bcU^{(n)})]^\top \distr [\bcU,\Phi(\bcU)]^\top$ as $n\to\infty$.
\item
	Suppose that there exists $C\in\cB(\DD(\RR_+,\RR^d))$
	 such that  $\PP(\bcU \in C)=1$, $C\subset \CC(\RR_+,\RR^d)\cup\Phi^{-1}\left(\CC(\RR_+,\RR^q)\right)$,
	 and for all $f\in C$, we have $\Phi_n(f_n)\Jtoo{q}\Phi(f)$ as $n\to\infty$
	 whenever $f_n\Jtoo{d} f$ as $n\to\infty$ with $f_n\in\DD(\RR_+,\RR^d)$, $n\in\NN$.
	Then $[\bcU^{(n)},\Phi_n(\bcU^{(n)})]^\top \distr [\bcU,\Phi(\bcU)]^\top$ as $n\to\infty$.
\end{enumerate}
\end{Lem}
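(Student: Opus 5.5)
We derive Lemma \ref{Conv2Funct} from the extended continuous mapping theorem, Kallenberg \cite[Theorem 3.27]{Kal}. That result says the following. Let $\xi_n\distr\xi$ in a metric space $E$, and let $h,h_n:E\to E'$ be measurable. Suppose there is a measurable $C$ with $\PP(\xi\in C)=1$ such that $h_n(x_n)\to h(x)$ whenever $x_n\to x\in C$. Then $h_n(\xi_n)\distr h(\xi)$.

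We apply it with $E=\DD(\RR_+,\RR^d)$ carrying the $J_1$ metric, $E'=\DD(\RR_+,\RR^{d+q})$, and
\[
h_n(f):=[f,\Phi_n(f)]^\top,\qquad h(f):=[f,\Phi(f)]^\top .
\]
Measurability of $h_n$ and $h$ holds because the finite dimensional sets generate $\cB(\DD(\RR_+,\RR^{d+q}))$ (Jacod and Shiryaev \cite[Chapter VI, Theorem 1.14 c)]{JacShi}). Indeed, $\pi_t\circ h_n=[\pi_t,\pi_t\circ\Phi_n]^\top$ is measurable, since $\Phi_n$ is Borel and each $\pi_t$ is measurable; the same argument applies to $h$. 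It then remains to check the pointwise continuity condition on $C$ in each case.

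\textbf{Case (i).} Let $f\in C$ and $f_n\Jto f$. Since $C\subset C_{\Phi,(\Phi_n)}\subset\CC(\RR_+,\RR^d)$, the limit $f$ is continuous. By part (i) of Lemma \ref{Jto_basic}, $f_n\lu f$, and the definition of $C_{\Phi,(\Phi_n)}$ then gives $\Phi_n(f_n)\lu\Phi(f)$. Part (ii) of Lemma \ref{Jto_basic} yields $[f_n,\Phi_n(f_n)]^\top\lu[f,\Phi(f)]^\top$. The limit is continuous, because $f$ is continuous and locally uniform limits of c\`adl\`ag functions converging to a continuous limit are continuous. (More simply, locally uniform convergence implies $J_1$ convergence: take $\lambda_n=\mathrm{id}$.) Hence $h_n(f_n)\Jto h(f)$.

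\textbf{Case (ii).} Let $f\in C$ and $f_n\Jto f$, so that $\Phi_n(f_n)\Jtoo{q}\Phi(f)$ by assumption. The difficulty is that $J_1$ convergence of the two coordinates separately does not imply joint $J_1$ convergence. This is why the hypothesis requires $f$ or $\Phi(f)$ to be continuous, and why part (iii) of Lemma \ref{Jto_basic} is needed. If $\Phi(f)\in\CC(\RR_+,\RR^q)$, part (iii) applied with $g_n=\Phi_n(f_n)$ and $g=\Phi(f)$ gives $[f_n,\Phi_n(f_n)]^\top\Jtoo{d+q}[f,\Phi(f)]^\top$. If instead $f\in\CC(\RR_+,\RR^d)$, apply part (iii) with the roles swapped: it gives $[\Phi_n(f_n),f_n]^\top\Jto[\Phi(f),f]^\top$. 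The coordinate permutation is a linear isometry of $\RR^{d+q}$, so composing with it preserves $J_1$ convergence (use the same time changes $\lambda_n$), and we recover the desired order.

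In both cases Kallenberg's theorem gives $h_n(\bcU^{(n)})\distr h(\bcU)$, which is the claim. The one delicate point is the joint-convergence issue in case (ii), and it is handled exactly by the continuity alternative built into the hypothesis on $C$.
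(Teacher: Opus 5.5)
Your proof is correct and takes essentially the paper's route: Kallenberg's extended continuous mapping theorem applied to $f\mapsto[f,\Phi_n(f)]^\top$, with part (iii) of Lemma \ref{Jto_basic} giving joint $J_1$ convergence in case (ii). You handle case (i) directly via joint locally uniform convergence rather than reducing it to case (ii), which the paper notes is also possible, and your coordinate swap when $f$ rather than $\Phi(f)$ is continuous makes explicit a step the paper leaves implicit.

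One small correction: in case (i) your claim that the limit $[f,\Phi(f)]^\top$ is continuous is unjustified. $\Phi(f)$ need not be continuous (take $\Phi_n=\Phi$ constant, equal to a fixed discontinuous path). The claim is not needed, though, since your parenthetical argument (locally uniform convergence implies $J_1$ convergence, taking $\lambda_n=\mathrm{id}$) already covers this step.
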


\noindent\textbf{Proof.}
We first show that part \textup{(i)} is a special case of part \textup{(ii)}.
Let $C$ satisfy the conditions of part $\textup{(i)}$.
Then we have that $C\subset C_{\Phi,(\Phi_n)_{n\in\NN}}\subset \CC(\RR_+,\RR^d)$,
 thus, by part \textup{(i)} of Lemma \ref{Jto_basic}, whenever $f\in C$ and $f_n\in\DD(\RR_+,\RR^d)$, $n\in\NN$,
 are such that $f_n\Jtoo{d} f$ as $n\to\infty$, then we also have that $f_n\lu f$ as $n\to\infty$.
Therefore, by the definition of $C_{\Phi,(\Phi_n)_{n\in\NN}}$,
 we have that $\Phi_n(f_n)\lu\Phi(f)$ as $n\to\infty$.
Since, by Jacod and Shiryaev \cite[Chapter VI, part a) of Proposition 1.17]{JacShi},
 $\Phi_n(f_n)\lu\Phi(f)$ as $n\to\infty$ implies that $\Phi_n(f_n)\Jtoo{q}\Phi(f)$ as $n\to\infty$,
 the set $C$ satisfies the conditions of part \textup{(ii)}, and the assertion of part \textup{(i)} follows.

Now, suppose that $C$ satisfies the conditions of part \textup{(ii)},
 and let $\widetilde\Phi:\DD(\RR_+,\RR^d)\to\DD(\RR_+,\RR^{d+q})$
 and $\widetilde\Phi_n:\DD(\RR_+,\RR^d)\to\DD(\RR_+,\RR^{d+q})$, $n\in\NN$, be defined by
\begin{equation*}
\widetilde\Phi(g):=
	\begin{bmatrix}
	g\\
	\Phi(g)
	\end{bmatrix},
\qquad
\widetilde\Phi_n(g):=
	\begin{bmatrix}
	g\\
	\Phi_n(g)
	\end{bmatrix},
\qquad n\in\NN, \qquad g\in\DD(\RR_+,\RR^d).
\end{equation*}
Let $f\in C$ and $f_n\in\DD(\RR_+,\RR^d)$, $n\in\NN$, be such that $f_n\Jtoo{d} f$ as $n\to\infty$.
Then, by assumption, we have that $\Phi_n(f_n)\Jtoo{q}\Phi(f)$ as $n\to\infty$.
Since $f\in C$, at least one of the functions $f$ and $\Phi(f)$ is continuous,
 and hence, by part \textup{(iii)} of Lemma \ref{Jto_basic},
 we have that $\widetilde\Phi_n(f_n)\Jtoo{d+q}\widetilde\Phi(f)$ as $n\to\infty$.
The mappings $\widetilde{\Phi}$ and $\widetilde{\Phi}_n$, $n\in\NN$, are Borel measurable,
 which follows from Kallenberg \cite[Lemma 1.8]{Kal}, due to the Borel measurability of the identity map
 and the Borel measurability of $\Phi$ and $\Phi_n$, $n\in\NN$, respectively.
By Kallenberg \cite[Theorem 3.27]{Kal}, this implies the assertion of part \textup{(ii)}.

We mention that the assertion of part \textup{(i)} can also be directly derived from Kallenberg \cite[Theorem 3.27]{Kal}.
\proofend

Recall the following result concerning the composition of c\`adl\`ag functions
 from the left with a continuous function
 (see, e.g., Ethier and Kurtz \cite[Problem 3.11.13]{EthKur}).

\begin{Lem}\label{cont_cont}
Let $d,q\in\NN$, and let $\varphi:\RR^d\to\RR^q$ be continuous.
Then the mapping
 $\DD(\RR_+,\RR^d)\ni f\mapsto \varphi\circ f=\left(\varphi(f(t))\right)_{t\in\RR_+}\in\DD(\RR_+,\RR^q)$
 is continuous.
\end{Lem}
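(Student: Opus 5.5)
This is Lemma \ref{cont_cont}: if $\varphi:\RR^d\to\RR^q$ is continuous, then $f\mapsto\varphi\circ f$ is continuous from $\DD(\RR_+,\RR^d)$ to $\DD(\RR_+,\RR^q)$. The plan is to verify directly, with the time-change characterization of $J_1$ convergence, that $f_n\Jto f$ implies $\varphi\circ f_n\Jto\varphi\circ f$.

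First, $\varphi\circ f$ is c\`adl\`ag whenever $f$ is. Right continuity of $f$ and continuity of $\varphi$ give right continuity of $\varphi\circ f$. Likewise, the left limit $\varphi(f(t-))$ exists at every $t\in\RR_{++}$. So the mapping is well defined into $\DD(\RR_+,\RR^q)$.

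For continuity, fix $f\in\DD(\RR_+,\RR^d)$ and $f_n\Jto f$. By Jacod and Shiryaev \cite[Chapter VI, part a) of Theorem 1.14]{JacShi}, there are strictly increasing continuous $\lambda_n:\RR_+\to\RR_+$ with $\lambda_n(0)=0$ and $\lambda_n(t)\to\infty$ as $t\to\infty$. They satisfy $\sup_{t\in\RR_+}|\lambda_n(t)-t|\to0$ and $\sup_{t\in[0,K]}\|f_n(\lambda_n(t))-f(t)\|\to0$ for all $K\in\RR_{++}$. I will reuse the same $\lambda_n$ for $\varphi\circ f_n$. Then it suffices to show, for each fixed $K\in\RR_{++}$,
\[
\sup_{t\in[0,K]}\|\varphi(f_n(\lambda_n(t)))-\varphi(f(t))\|\to0 \qquad \text{as $n\to\infty$.}
\]

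The key step is to confine everything to a compact set, so that uniform continuity of $\varphi$ can be used. A c\`adl\`ag function is bounded on compacts, so $R:=\sup_{t\in[0,K]}\|f(t)\|<\infty$ (see, e.g., Billingsley \cite[page 122]{Bil}). For $n$ large, the uniform convergence gives $\|f_n(\lambda_n(t))\|\leq R+1$ for all $t\in[0,K]$. Hence all the points $f(t)$ and $f_n(\lambda_n(t))$, $t\in[0,K]$, lie in the closed ball $B$ of radius $R+1$. The map $\varphi$ is uniformly continuous on $B$. Given $\vare>0$, choose $\eta\in(0,1)$ with $\|\varphi(x)-\varphi(y)\|<\vare$ for all $x,y\in B$ with $\|x-y\|<\eta$. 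For $n$ large enough that the sup distance above is below $\eta$, the displayed supremum is at most $\vare$, which proves the claim.

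I expect no genuine obstacle here. The one step needing care is the boundedness and uniform continuity on a compact ball, since $\varphi$ is only continuous, not uniformly continuous, on all of $\RR^d$. Alternatively, one could simply cite Ethier and Kurtz \cite[Problem 3.11.13]{EthKur}.
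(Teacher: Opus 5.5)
Your proof is correct. The paper gives no argument for this lemma; it only recalls it from Ethier and Kurtz \cite[Problem 3.11.13]{EthKur}. So your proposal is a self-contained verification of that cited exercise, not a different route to an argument that appears in the paper.

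You use the right two ingredients. First, you keep the same time changes $\lambda_n$ from Jacod and Shiryaev \cite[Chapter VI, part a) of Theorem 1.14]{JacShi}; this works because $\varphi$ acts pointwise on values and therefore commutes with any reparametrization of time. Second, you localize to the compact ball of radius $R+1$, which upgrades mere continuity of $\varphi$ to uniform continuity. Choosing $\eta<1$ makes the ball condition automatic once the supremum distance is below $\eta$.

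The one step left implicit is that you prove sequential continuity. This suffices because $\DD(\RR_+,\RR^d)$ with the $J_1$ topology is a metric space.

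Your version makes the lemma independent of the external reference at the cost of a paragraph. The paper's citation is shorter but leaves the verification to the reader.
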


The next lemma concerns time-changed c\`adl\`ag functions.
\begin{Lem}\label{timechange_cont}
Let $d\in\NN$, and for all $g\in\DD(\RR_+,\RR)$
 such that $g$ is monotone increasing and $g(\RR_+)\subset\RR_+$,
 let $\psi_g(f):=f\circ g$, $f\in\DD(\RR_+,\RR^d)$.
\begin{enumerate}[label=(\roman*)]
\item
	Then $\psi_g(\DD(\RR_+,\RR^d))\subset\DD(\RR_+,\RR^d)$, and the mapping
	$\psi_g:\DD(\RR_+,\RR^d)\to\DD(\RR_+,\RR^d)$ is Borel measurable.
\item
	If, in addition, $g$ is continuous and strictly monotone increasing, then $\psi_g$ is continuous.
\item
	If, in addition, $g$ is continuous and strictly monotone increasing,
	 and $g_n\in\DD(\RR_+,\RR)$, $n\in\NN$,
	 are monotone increasing with $g_n(\RR_+)\subset\RR_+$
	 such that $g_n\lu g$ as $n\to\infty$,
	 then, for all $f\in\DD(\RR_+,\RR^d)$ and $f_n\in\DD(\RR_+,\RR^d)$, $n\in\NN$
	 such that $f_n\Jto f$ as $n\to\infty$, we have $\psi_{g_n}(f_n)\Jto\psi_g(f)$ as $n\to\infty$.
\end{enumerate}
\end{Lem}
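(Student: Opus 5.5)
The statement to prove is Lemma \ref{timechange_cont}, and I would prove its three parts in order, working throughout with the characterization of $J_1$ convergence in Jacod and Shiryaev \cite[Chapter VI, part a) of Theorem 1.14]{JacShi}: $f_n\Jto f$ iff there are strictly increasing continuous bijections $\lambda_n$ of $\RR_+$ with $\sup_t|\lambda_n(t)-t|\to0$ and $\sup_{t\in[0,K]}\|f_n(\lambda_n(t))-f(t)\|\to0$ for all $K$.

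\textbf{Part (i).} First I would check that $f\circ g$ is càdlàg. Right continuity holds because $g(s)\downarrow g(t)$ as $s\downarrow t$, with $g(s)\ge g(t)$ by monotonicity, and $f$ is right continuous. Left limits exist because $g(s)\uparrow g(t-)$ as $s\uparrow t$. If eventually $g(s)=g(t-)$, the left limit of $f\circ g$ at $t$ is the constant value $f(g(t-))$. Otherwise $g(s)<g(t-)$ along the approach, so the left limit is $f(g(t-)-)$, which exists since $f$ is càdlàg. For measurability I would argue as in Lemma \ref{cG_measurable}: since the projections generate $\cB(\DD)$, it suffices that $f\mapsto \pi_t(f\circ g)=\pi_{g(t)}(f)$ is measurable for each $t$, and it is, because each $\pi_s$ is Borel measurable.

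\textbf{Part (ii).} This follows from part (iii) by taking constant sequences $g_n=g$, so I would prove (iii) directly.

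\textbf{Part (iii).} Let $f_n\Jto f$ with time changes $\lambda_n$, and let $g_n\lu g$ with $g$ continuous and strictly increasing; $g$ need not be unbounded. The natural approach is to build time changes $\mu_n$ such that $g_n\circ\mu_n$ is close to $\lambda_n\circ g$ uniformly on $[0,K]$. Then we can write
\[
f_n(g_n(\mu_n(t)))-f(g(t)) = \bigl[f_n(g_n(\mu_n(t)))-f_n(\lambda_n(g(t)))\bigr]+\bigl[f_n(\lambda_n(g(t)))-f(g(t))\bigr].
\]
The second bracket is uniformly small on $[0,K]$ because $g([0,K])$ is compact. The first bracket is the real obstacle: $g_n$ may be only monotone, with flat pieces and jumps, so exact matching is impossible.

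\textbf{Main obstacle.} A cleaner route avoids the explicit construction. I would use the sequential criterion Jacod and Shiryaev \cite[Chapter VI, Proposition 2.1]{JacShi}, or Whitt's composition theorem \cite[Theorem 3.1]{Whitt}: composition $(f,g)\mapsto f\circ g$ is continuous at pairs $(f,g)$ with $g$ continuous and strictly increasing, where $g_n\to g$ uniformly on compacts and the $g_n$ are nondecreasing.

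If proving this by hand, I would handle two cases of points $s=g(t)$. Where $f$ is continuous at $s$, local uniform convergence of $f_n$ near $s$ combines with $g_n(t_n)\to g(t)$ whenever $t_n\to t$. Where $s$ is a jump time of $f$ (finitely many of size $>\varepsilon$ in $g([0,K])$), $f_n$ has a unique nearby large jump at $\lambda_n(s)$. Monotonicity of $g_n$ together with $g_n\to g$ and the strict increase of $g$ gives a unique time $t_n\to t$ where $g_n$ crosses $\lambda_n(s)$. Then I would define $\mu_n$ piecewise linearly, mapping $t$ to $t_n$ at these finitely many jump points. Between these points, the oscillation of $f_n\circ g_n\circ\mu_n$ versus $f\circ g$ is controlled by the $J_1$ modulus $w'$ of $f$ (Billingsley \cite{Bil}), since strict monotonicity of $g$ prevents distinct jump times from collapsing.

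I expect this verification, that large jumps of $f_n\circ g_n$ align with those of $f\circ g$ without being split by plateaus or jumps of $g_n$, to be the hardest step.
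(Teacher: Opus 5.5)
Your proposal is correct and follows the paper's proof: part (i) is handled as there (the càdlàg property of $f\circ g$ from the monotonicity and right continuity of $g$, and measurability from $\pi_t\circ\psi_g=\pi_{g(t)}$ together with the fact that the projections generate $\cB(\DD(\RR_+,\RR^d))$), and parts (ii) and (iii) are obtained, as in the paper, from Whitt's composition theorem \cite[Theorem 3.1]{Whitt}, using that $g_n\lu g$ with $g$ continuous is equivalent to $g_n\Jto g$ by part (i) of Lemma \ref{Jto_basic}. Once you cite Whitt's theorem, neither your reduction of (ii) to (iii) via $g_n:=g$ nor your hand-made jump-matching sketch is needed; the sketch is only an outline, so rely on the citation.
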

\noindent
\textbf{Proof.}
\textup{(i).} First, we show that $\psi_g(\DD(\RR_+,\RR^d))\subset\DD(\RR_+,\RR^d)$.
For this part of the proof, let $f\in\DD(\RR_+,\RR^d)$ be fixed.
Let $t\in\RR_+$, and let $t_n\in\RR_+$, $n\in\NN$, be a monotone sequence with $\lim_{n\to\infty}t_n=t$.
Since $g$ is c\`adl\`ag, we have that the sequence $g(t_n)$, $n\in\NN$, is convergent.
Since $g$ is monotone increasing, we have that $g(t_n)$, $n\in\NN$, is monotone as well,
with the same monotonicity as $t_n$, $n\in\NN$.
Then, since $f$ is c\`adl\`ag, we get that $\lim_{n\to\infty}f(g(t_n))$ exists,
and if $t_n$, $n\in\NN$, is decreasing and $t_n\to t$ as $n\to\infty$, then $\lim_{n\to\infty}f(g(t_n))=f(g(t))$, yielding that $f\circ g\in\DD(\RR_+,\RR^d)$.

Next, we show the Borel measurability of $\psi_g$.
Using that the finite dimensional sets in $\DD(\RR_+,\RR^d)$ generate the Borel $\sigma$-algebra on $\DD(\RR_+,\RR^d)$
(see, e.g., Jacod and Shiryaev \cite[Chapter VI, Theorem 1.14, part c)]{JacShi}), to check the Borel measurability of $\psi_g$
 it is enough to verify that the mapping $\pi_t\circ \psi_g:\DD(\RR_+,\RR^d)\to\RR^d$
 is Borel measurable for  all $t\in\RR_+$.
For all $t\in\RR_+$, we have that $\pi_t\circ\psi_g=\pi_{g(t)}$, where the right hand side is Borel measurable,
 since $g(t)\in\RR_+$ and $\pi_s$ is Borel measurable for all $s\in\RR_+$
 (see, e.g., Billingsley \cite[part (i) of Theorem 16.6]{Bil}).
Thus we get that $\psi_g$ is Borel measurable.

\textup{(ii)} and \textup{(iii).} Let $f\in\DD(\RR_+,\RR^d)$ and
 $f_n\in\DD(\RR_+,\RR^d)$, $n\in\NN$, be such that $f_n\Jto f$ as $n\to\infty$,
 and let $g\in\CC(\RR_+,\RR)$ and $g_n\in\DD(\RR_+,\RR^d)$, $n\in\NN$,
 be such that $g$ is non-negative and strictly monotone increasing,
 and $g_n$, $n\in\NN$, are non-negative and monotone increasing,
 and $g_n\lu g$ as $n\to\infty$.
Since $g$ is continuous, by part \textup{(i)} of Lemma \ref{Jto_basic},
 $g_n\lu g$ as $n\to\infty$ implies that $g_n\Jto g$ as $n\to\infty$.
Consequently, by Whitt \cite[Theorem 3.1]{Whitt}, we have that $f_n\circ g\Jto f\circ g$ as $n\to\infty$
 and $f_n\circ g_n\Jto f\circ g$ as $n\to\infty$, as desired.
\proofend

\begin{Lem}\label{int_cont}
For each $d\in\NN$, let $\cI^{(d)}:\DD(\RR_+,\RR^d)\to\CC(\RR_+,\RR^d)$,
\begin{equation*}
\left(\cI^{(d)}(f)\right)(t):=\int_0^t f(s)\,\dd s,\qquad t\in\RR_+,\qquad f\in\DD(\RR_+,\RR^d).
\end{equation*}
Then $\cI^{(d)}$ is continuous for each $d\in\NN$.
In particular, if $d\in\NN$ and $f\in\DD(\RR_+,\RR^d)$, $f_n\in\DD(\RR_+,\RR^d)$, $n\in\NN$,
are such that $f_n\Jto f$ as $n\to\infty$, then $\cI^{(d)}(f_n)\lu\cI^{(d)}(f)$ as $n\to\infty$.
\end{Lem}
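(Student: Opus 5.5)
To prove Lemma \ref{int_cont}, the plan is to reduce the continuity of $\cI^{(d)}$ to the second, ``in particular'' assertion. Since the Skorokhod $J_1$ topology on $\DD(\RR_+,\RR^d)$ is metrizable, sequential continuity suffices. The metric on $\CC(\RR_+,\RR^d)$ is the one inherited from $\DD(\RR_+,\RR^d)$, and on continuous limits that convergence coincides with locally uniform convergence by part (i) of Lemma \ref{Jto_basic}. So it is enough to show the following: if $f_n\Jto f$ as $n\to\infty$, then $\sup_{t\in[0,T]}\|\int_0^t (f_n(s)-f(s))\,\dd s\|\to0$ for every $T\in\RR_{++}$. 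A preliminary remark is that for $f\in\DD(\RR_+,\RR^d)$ the function $\int_0^t f(s)\,\dd s$ is well defined and continuous in $t$. This holds because a c\`adl\`ag function is bounded on compacts and has at most countably many discontinuities, hence is Riemann (and Lebesgue) integrable on $[0,T]$.

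Fix $T\in\RR_{++}$. The estimate I will aim for is
\[
\sup_{t\in[0,T]}\Big\|\int_0^t (f_n(s)-f(s))\,\dd s\Big\|\leq\int_0^T\|f_n(s)-f(s)\|\,\dd s,
\]
and I will show that the right-hand side tends to $0$ by dominated convergence, which needs two ingredients.
\begin{itemize}
\item \textbf{Uniform bound.} $\sup_{n}\sup_{s\in[0,T]}\|f_n(s)\|<\infty$. This holds because $J_1$-convergence gives time changes $\lambda_n$ with $\sup_{s}|\lambda_n(s)-s|\to0$ and $\sup_{s\in[0,T+1]}\|f_n(\lambda_n(s))-f(s)\|\to0$ (Jacod and Shiryaev \cite[Chapter VI, Theorem 1.14]{JacShi}). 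Since $\lambda_n([0,T+1])\supset[0,T]$ for large $n$, the bound follows from the boundedness of $f$ on $[0,T+1]$.
\item \textbf{Pointwise convergence almost everywhere.} $f_n(s)\to f(s)$ for every continuity point $s$ of $f$. This is standard, see Jacod and Shiryaev \cite[Chapter VI, part (b.5) of Proposition 2.1]{JacShi}, or directly: $\|f_n(s)-f(s)\|\leq\|f_n(s)-f(\lambda_n^{-1}(s))\|+\|f(\lambda_n^{-1}(s))-f(s)\|$, where the first term is controlled by the uniform estimate above and the second by continuity of $f$ at $s$ together with $\lambda_n^{-1}(s)\to s$. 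The discontinuity set of $f$ is countable, so it has Lebesgue measure zero.
\end{itemize}
Dominated convergence on $[0,T]$ then gives $\int_0^T\|f_n-f\|\,\dd s\to0$.

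The step needing most care is the uniform bound, specifically ensuring that the time changes cover $[0,T]$. Taking the uniform estimate on $[0,T+1]$ rather than $[0,T]$ handles this. Everything else is routine. Finally, locally uniform convergence implies $J_1$-convergence, by Jacod and Shiryaev \cite[Chapter VI, part a) of Proposition 1.17]{JacShi}. Hence $\cI^{(d)}(f_n)\to\cI^{(d)}(f)$ in the topology of the target space, which gives the continuity of $\cI^{(d)}$ as a map into $\CC(\RR_+,\RR^d)$ (and into $\DD(\RR_+,\RR^d)$).
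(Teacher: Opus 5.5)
Your proof is correct, but it takes a different route from the paper. The paper does not prove the analytic core itself. It cites Ethier and Kurtz \cite[Problem 3.11.26]{EthKur} for the continuity of $\cI^{(1)}$, and then treats $d\geq2$ coordinatewise:
\begin{enumerate}
\item $f_n\Jtoo{d}f$ implies $f_n^{(i)}\Jtoo{1}f^{(i)}$ for each $i$, using the same time changes.
\item Each $\cI^{(1)}(f_n^{(i)})\Jtoo{1}\cI^{(1)}(f^{(i)})$ is upgraded to locally uniform convergence by part (i) of Lemma \ref{Jto_basic}, since the limit is continuous.
\item The coordinatewise sup-bounds are summed, and one returns to $J_1$ convergence via Lemma \ref{Jto_basic} again.
\end{enumerate}

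You instead prove the one-dimensional fact directly, and in all dimensions at once. The time-change characterization \cite[Chapter VI, Theorem 1.14]{JacShi} gives you a uniform bound on $[0,T]$. Working on $[0,T+1]$ so that $\lambda_n([0,T+1])\supset[0,T]$ for large $n$ is exactly the right precaution, and the finitely many small $n$ are harmless because c\`adl\`ag functions are bounded on compacts. The same characterization gives convergence at every continuity point of $f$, a set of full Lebesgue measure. Dominated convergence then yields $\int_0^T\|f_n(s)-f(s)\|\,\dd s\to0$, which dominates $\sup_{t\in[0,T]}\|(\cI^{(d)}(f_n))(t)-(\cI^{(d)}(f))(t)\|$.

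The paper's argument is shorter because it delegates the analytic content to a textbook exercise. Yours is self-contained and needs no coordinate reduction. It also establishes the slightly stronger fact that $J_1$ convergence implies $L_1$ convergence on compact intervals, from which locally uniform convergence of the integral functions, and hence $J_1$ continuity of $\cI^{(d)}$, follows at once.
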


\noindent\textbf{Proof.}
First, note that $\cI^{(d)}(f)$ is continuous for each $f\in \DD(\RR_+,\RR^d)$
 (since it is an integral with a variable upper limit, the integral function of $f$),
 so the range of $\cI^{(d)}$ is indeed a subset of $\CC(\RR_+,\RR^d)$.
The continuity of $\cI^{(1)}$ follows by Ethier and Kurtz \cite[Problem 3.11.26]{EthKur}.
For the rest of this proof, let $d\in\NN\setminus\{1\}$ be fixed.
For $f\in\DD(\RR_+,\RR^d)$, denote by $f^{(i)}$ the $i$-th coordinate function of $f$ for each $i\in\{1,\dots,d\}$,
that is, $f=[f^{(1)},\dots,f^{(d)}]^\top$.
Let $f\in\DD(\RR_+,\RR^d)$ and $f_n\in\DD(\RR_+,\RR^d)$, $n\in\NN$, be such that $f_n\Jtoo{d}f$ as $n\to\infty$.
Then it follows that $f_n^{(i)}\Jtoo{1} f^{(i)}$ as $n\to\infty$ for each $i\in\{1,\dots,d\}$,
 since $|h_1^{(i)}(t)-h_2^{(i)}(t)|\leq\|h_1(t)-h_2(t)\|$ for all $h_1,h_2\in\DD(\RR_+,\RR^d)$
 and $t\in\RR_+$, $i\in\{1,\dots,d\}$.
Since $\cI^{(1)}$ is continuous, we have that $\cI^{(1)}(f_n^{(i)})\Jtoo{1}\cI^{(1)}(f^{(i)})$
 as $n\to\infty$ for each $i\in\{1,\dots,d\}$.
Further, since for each $i\in\{1,\dots,d\}$, $\cI^{(1)}(f^{(i)})$ is continuous,
 by part \textup{(i)} of Lemma \ref{Jto_basic},
 we have that $\cI^{(1)}(f_n^{(i)})\lu\cI^{(1)}(f^{(i)})$ as $n\to\infty$.
Note that for all $T\in\RR_{++}$, we have
\begin{equation*}
\sup_{t\in[0,T]}\left\|\left(\cI^{(d)}(f_n)\right)(t)-\left(\cI^{(d)}(f)\right)(t)\right\|
	\leq\sum_{i=1}^d\sup_{t\in[0,T]}\left|\left(\cI^{(1)}(f_n^{(i)})\right)(t)-\left(\cI^{(1)}(f^{(i)})\right)(t)\right|, \qquad n\in\NN.
\end{equation*}
Since each term on the right hand side of the above inequality converges to $0$ as $n\to\infty$, 
 we have that $\cI^{(d)}(f_n)\lu\cI^{(d)}(f)$ as $n\to\infty$,
 that is, the second assertion of the lemma holds.
Thus, using again part (i) of Lemma \ref{Jto_basic} and the continuity of $\cI^{(d)}(f)$, we get that
  $\cI^{(d)}(f_n)\Jtoo{d}\cI^{(d)}(f)$ as $n\to\infty$, as desired.
\proofend

%\section*{Acknowledgements}

\section*{Statements and Declarations}

The authors declare that they have no known competing financial interests
or personal relationships that could have appeared to influence the work presented in this paper. 
Data sharing is not applicable to this article as no datasets were generated or analyzed during the current study.

\end{document}